\newcommand{\legendre}[2]{\left(\frac{#1}{#2}\right)}
\newcommand{\bmh}[1]{\bm{\hat{#1}}}
\newtheorem{thm}{Theorem}[section]
\newtheorem{lem}[thm]{Lemma}
\theoremstyle{definition}
\theoremstyle{remark}
\newtheorem{rmk}{Remark}
\title{Representation of Squares by Nonsingular Cubic Forms}
\author{Lasse Grimmelt, Will Sawin      \\ }
\begin{document}
\maketitle

\begin{abstract}
We prove an asymptotic formula for the number of representations of squares by nonsingular cubic forms in six or more variables. The main ingredients of the proof are Heath-Brown's form of the Circle Method and various exponential sum results. The depth of the exponential sum results is comparable to Hooley's work on cubic forms in nine variables, in particular we prove an analogue of Katz' bound.
\end{abstract}

\section{Introduction}
Let $n\geq 6$ and $C\in \mathbb{Z}[X_1,\ldots,X_n]$ be a non-singular cubic form. In this paper we prove asymptotics related to the number of integral solutions of
\begin{align}\label{goal}
C(x_1,\ldots,x_n)=y^2
\end{align}
in an expanding region. We denote by $\Upsilon(X)$ the the number of solutions $(x_1,\ldots,x_n,y)\in \mathbb{Z}^{n+1}$ of \eqref{goal} weighted by some smooth weight. It is defined precisely in \eqref{Updef}. Our main result is the following asymptotics for $\Upsilon(X)$.
\begin{thm}\label{Thm1.1} Assume we are given $C(x_1\ldots,x_n)$ as above with $n\geq 6$. There exists a $\delta>0$, such that
$$\Upsilon(X)=X^{n-\frac{3}{2}}\Bigl(\mathcal{J}\mathfrak{S}+O((\log X)^{-\delta})\Bigr).$$
Here $\mathcal{J}>0$ is the singular integral and $\mathfrak{S}$ is the singular series. Furthermore we have $\mathfrak{S}>0$, if there exist $p$-adic solutions of \eqref{goal} for all $p$. In particular, if $C$ is primitive then $\mathfrak{S}>0$.
\end{thm}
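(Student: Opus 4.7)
The plan is to apply Heath-Brown's delta-symbol form of the Circle Method to the single equation $F(\mathbf{x},y) = C(\mathbf{x}) - y^2 = 0$ in $n+1$ unknowns. After smoothing and inserting the delta symbol, $\Upsilon(X)$ is expressed as
\begin{equation*}
\Upsilon(X) = \sum_{q \leq Q_{\max}} \frac{w_q}{q^{n+1}} \sum_{\substack{a \bmod q \\ (a,q)=1}} S_C(a,q)\,G(a,q)\,I_q(X),
\end{equation*}
where $S_C(a,q) = \sum_{\mathbf{x} \bmod q} e(aC(\mathbf{x})/q)$ is a complete cubic exponential sum, $G(a,q) = \sum_{y \bmod q} e(-ay^2/q)$ is a quadratic Gauss sum, and $I_q(X)$ is an oscillatory integral arising from smoothing $\mathbf{x}$ at scale $X$ and $y$ at scale $X^{3/2}$. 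The built-in square-root cancellation of $G(a,q)$ yields an additional saving that ought to lower the number of variables required, relative to Hooley's threshold of $n\geq 9$ for the pure cubic equation, to $n \geq 6$ here.

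First I would isolate the main term from the small-moduli contribution $q \leq Q_0$ for a suitable intermediate $Q_0$. Multiplicativity of $S_C$ and $G$ in $q$, together with Taylor expansion of $I_q(X)$ for small $q$, yields the product form $X^{n-3/2}\mathcal{J}\mathfrak{S}$, where $\mathcal{J}$ comes from the archimedean integrals and $\mathfrak{S} = \prod_p \sigma_p$ from the local densities. Positivity of $\sigma_p$ whenever \eqref{goal} has a $p$-adic solution follows by Hensel lifting: nonsingularity of $C$ ensures that the Jacobian of $F$ is nonzero at any solution with $y \not\equiv 0 \pmod p$, and solutions with $y \equiv 0$ reduce to the classical Hensel argument for $C$. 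For primitive $C$, adic solubility of $C(\mathbf{x}) = y^2$ is substantially easier than solubility of $C = 0$, as one is free to adjust $y$, and a short Hensel/counting argument produces a solution at every prime.

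The technical heart of the proof is the estimate on $q > Q_0$. Trivially $|S_C(a,q)| \leq q^{(n+1)/2}$ up to a factor detecting singular reductions, and this must be improved on average in both $a$ and $q$ to obtain genuine power saving. For primes $p$ of good reduction of $C$, square-root cancellation in $S_C(a,p)$ follows from Deligne's Riemann Hypothesis applied to the $\ell$-adic cohomology of $\{C = 0\}$ mod $p$. Averaging over $a$ and over $p$ then requires a Katz-style equidistribution theorem for the Frobenius conjugacy classes of the associated exponential sum sheaves, which the authors establish as the announced analogue of Katz's bound. Primes of bad reduction and prime-power moduli are treated separately by a $p$-adic descent on $q$ after isolating the singular locus.

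The main obstacle I anticipate is this Katz-type bound itself: it must provide sufficient bilinear cancellation in the $(a,q)$-average to absorb all losses from the Heath-Brown weights and from the bad primes of $C$, and must hold with uniform quality in $p$. Once it is in place, the remainder of the argument (Poisson summation in the smoothing variables, convergence and positivity of $\mathfrak{S}$, and balancing the contributions of small and large moduli) should be a routine, if intricate, extension of standard delta-method technology and will produce the stated error of size $X^{n-3/2}(\log X)^{-\delta}$.
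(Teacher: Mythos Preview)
Your proposal has a structural gap at the very first step. In Heath-Brown's delta method one does \emph{not} obtain a decomposition of the shape
\[
\Upsilon(X)=\sum_{q}\frac{w_q}{q^{n+1}}\sum_{(a,q)=1}S_C(a,q)\,G(a,q)\,I_q(X)
\]
with a single archimedean integral $I_q(X)$. After inserting the delta symbol and applying Poisson summation in the $(\bm{x},y)$-variables, the output is
\[
\Upsilon(X)=\frac{c_N}{N^2}\sum_{\bm m\in\mathbb Z^{n+1}}\sum_{k}k^{-n-1}Q(\bm m,k)\,I_k(\bm m),
\qquad
Q(\bm m,k)={\sum_{h(k)}}^*\sum_{\bm l(k)}e_k\bigl(h f(\bm l)+\bm m\cdot\bm l\bigr),
\]
and the main term comes from $\bm m=\bm 0$, not from small $q$. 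Your displayed formula is precisely (and only) the $\bm m=\bm 0$ contribution; it completely omits the sum over nonzero dual frequencies $\bm m$, which is the entire analytic content of the theorem. Consequently your plan ``isolate main term from $q\le Q_0$, bound the tail $q>Q_0$ via Deligne + Katz averaging in $(a,q)$'' does not correspond to what actually has to be done.

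The genuine difficulty is to bound the $\bm m\neq\bm 0$ contribution, and this is where the paper's architecture differs sharply from your sketch. After evaluating the $l_{n+1}$-Gauss sum one has, for odd $k$,
\[
|Q(\bm m,k)|=\sqrt{k}\,\Bigl|{\sum_{h(k)}}^*\Bigl(\tfrac{h}{k}\Bigr)e_k(\overline{4h}\,m_{n+1}^2)\sum_{\hat{\bm l}(k)}e_k(hC(\hat{\bm l})+\hat{\bm m}\cdot\hat{\bm l})\Bigr|,
\]
so the extra quadratic variable introduces a \emph{multiplicative} character $\bigl(\tfrac{h}{k}\bigr)$ into the $h$-sum; this is what blocks the elementary reductions used by Heath-Brown and Hooley. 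The needed pointwise bound $Q(\bm m,p)\ll p^{n/2+1}$ holds only away from an exceptional set of $\bm m$: when $p\nmid m_{n+1}$ one invokes Katz's mixed-character theorem, and when $m_{n+1}=0$ one must use the Fouvry--Katz stratification, which produces two auxiliary polynomials $F_1$ (cutting out the dual variety of $C$) and $F_2$. The $\bm m$-sum is then split into five pieces according to whether $m_{n+1}=0$ and whether $F_1(\hat{\bm m})$, $F_2(\hat{\bm m})$ vanish; each piece requires its own average bound for $Q$ over cube-full moduli and over $\hat{\bm m}$ in residue classes. None of this stratification is visible in your outline, and without it the $\bm m\neq 0$ terms cannot be controlled for $n=6$.

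Finally, the reason the error term is only $(\log X)^{-\delta}$, rather than a power saving, is specific: for the pieces $\Upsilon_2,\Upsilon_3$ one needs that the complete averages
$D(p,0)=\sum_{\hat{\bm b}(p)}|Q((\hat{\bm b},0),p)|$ and $E(p,r)$ are strictly smaller than $p^{(3n+2)/2}$ and $p^{(3n+4)/2}$, respectively, on a set of primes of positive density. This follows from Katz's $A$-number criterion once one computes that the $A$-number of the relevant perverse sheaf $j_!\mathcal L_\chi(C)[n]$ equals $2^{n-1}\ge 2$. Your description of the ``Katz-type bound'' as bilinear cancellation in an $(a,q)$-average misidentifies both the object and the mechanism; the required input is an $L^1$-bound over the dual $\bm m$-variable, and the $A$-number computation is the non-routine algebraic step.
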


Note that the existence of non-trivial solutions of \eqref{goal} is obvious in many cases. If we set $a$ to be the sum of all coefficients of $C$ and assume $a\neq 0$, then we have the nonzero solution
\begin{align*}
C(a,\ldots,a)=(a^2)^2.
\end{align*}

The question of whether the nonsingular homogeneous cubic indeterminate equation
\begin{align}\label{kubk}
C(x_1,\ldots, x_{n})=0
\end{align}
has a non-trivial solution over the rational integers has been answered in different cases. The existence of local solutions in every $p$-adic field $\mathbb{Q}_p$ is necessary and it is thought that this condition is sufficient, provided $n$ is not too small. Heath-Brown showed an asymptotics for the number of solutions of \eqref{kubk} if $n\geq 10$, see \cite{hb2}. Extending this result, Hooley considered the more challenging case $n=9$ in a series of papers \cite{h9}\cite{h92}\cite{h93}. For our task mostly relevant are the first and third part of this series. In the first Hooley succeeded in showing the existence of non-trivial solutions of $\eqref{kubk}$, in the third an asymptotic was established. We note that if one assumes the truth of the Riemann hypothesis for a certain kind of Hasse-Weil L-functions, also the $n=8$ case can be handled. This was proved by Hooley in \cite{h8}. These results form the starting point for the proof of Theorem \ref{Thm1.1}. Our methodology is most comparable with the considerations of \cite{h93} on cubic forms in $9$ variables in the following aspects. We deduce an asymptotics and require algebraic input  of similar strength to estimate exponential sums.

A natural generalisation of the homogeneous equation \eqref{kubk} is to consider non homogeneous and possibly singular cubic polynomials. This was done, for example, by Browning and Heath-Brown in \cite{bhb}. In that work it is showed that the existence of $p$-adic solutions is sufficient to have infinitely many integral solutions of a cubic polynomial equation, provided the number of indeterminates is at least $11+\nu$. Here $\nu$ is the dimension of the singular locus of the leading cubic form with $\nu=-1$ if it is nonsingular. This result is applicable to \eqref{goal}, but gives a weaker result, since it does not use the structure of the separate quadratic variable.

As is common for results of the type as Theorem \ref{Thm1.1}, we employ a variant of the Hardy-Littlewood Circle Method. More precisely we use the delta-method that was developed by Heath-Brown in \cite{hb1}. This variant has been applied by Hooley in the above mentioned result \cite{h8} and we use the initial analysis of the method provided in that paper. Theorem \ref{Thm1.1} could be proved by a more classical form of the circle method, as used in \cite{h9}\cite{h93}. However, the application of Heath-Brown's new form reduces the technical complexity of the proof. After stating preparatory results and introducing some notation in the next section, we apply the circle method in section \ref{3sec2}.

We set
\begin{align} \label{Ziel}
f(x_1,\ldots, x_{n+1})=C(x_1,\ldots,x_{n})-x_{n+1}^2,
\end{align}
and denote by bold letters $n+1$ dimensional vectors. The main ingredient for the proof of Theorem \ref{Thm1.1} are estimates of several objects related to the exponential sum
\begin{align}\label{Qdef}
Q(\bm{m},k)= {\sum_{h(k)}}^*\sum_{\bm{l}(k)}e_k(hf(\bm{l})+\bm{m}.\bm{l}).
\end{align} 
We note that for odd $k$  the evaluation of the quadratic Gauss sum over $l_{n+1}$ gives us
\begin{align} \label{lnausgewertet}
|Q(\bm{m},k)|=|\sqrt{k}{\sum_{h(k)}}^*\legendre{h}{k}e_k(\overline{4h}m_{n+1}^2)\sum_{\bmh{l}(k)}e_k(hC(\bmh{l})+\bmh{m}.\bmh{l})|,
\end{align}
where we wrote $\bmh{l}=(l_1,\ldots,l_n)$ and similar for $\bmh{m}$. The exponential sum that plays a central role in Hooley's work on cubic forms is 
\begin{align*}
Q'_r(\bmh{m},k)={\sum_{h(k)}}^*e_k({\overline{h}r})\sum_{\bmh{l}(k)}e_k(hC(\bmh{l})+\bmh{m}.\bmh{l}).
\end{align*}
The estimates we require are closely related to the ones appearing in Hooley's papers \cite{h9} \cite{h93}. Our goal is to show that these estimates still hold with the additional multiplicative character 
\begin{align*}
\legendre{h}{k}.
\end{align*}

In subsection \ref{3sec3.1} we consider the first type of bound, namely a pointwise estimate for $Q(\bm{m},k)$ in the case where $k$ is prime or a square of a prime. The basis for estimating $Q(\bm{m},p)$ is Deligne's proof of the Riemann Hypothesis for varieties over finite fields, applied similarly as in Heath-Brown's \cite{hb2}. After summing trivially over $h$ in \eqref{Qdef} an application of Deligne's result gives for any $\bm{m}$
\begin{align*}
Q(\bm{m},p)\ll p^{\frac{1}{2}n+\frac{3}{2}}.
\end{align*}
To prove Theorem \ref{Thm1.1} we need to improve upon this bound for most $\bm{m}$ and show that we can also sum with cancellation over $h$. In our case this requires more delicate results than would follow from a direct application of Deligne's work. We show the bound 
\begin{align} \label{sqroh}
Q(\bm{m},p)\ll p^{\frac{1}{2}n+1}
\end{align}
in the following cases. 
\begin{itemize}
    \item When $p\nmid m_{n+1}$.
    \item When $p|m_{n+1}$ and $\bmh{m}$ is not the simultaneous zero mod $p$ for certain polynomials $F_1, F_2 \in \mathbb{Z}[X_1,\ldots,X_n]$.
\end{itemize}
When $p\nmid m_{n+1}$, \eqref{sqroh} follows from an application  Katz' \cite{katzbhb}. In the second case $F_1$ is the polynomial such that $F_1=0$ is the dual variety to $C=0$. The polynomial $F_2$ is chosen to vanish on a certain special subvariety of the dual variety, which is constructed by a complicated induction-on-dimension argument in \cite{K-L}, but certainly contains the singular locus of the dual variety to $C=0$. We obtain \eqref{sqroh} the second case by combining results of Katz \cite{katzmix} \cite{katz}, and Fouvry and Katz \cite{fok}. 

We prove a similar estimate for $Q(\bm{m},p^2)$, except that only $F_1$ plays a role, see Lemma \ref{primesquare}. The estimate in this prime square case can be easily reduced to the homogeneous case.

In subsection \ref{3sec3.2} we consider a second type of estimate, namely average bounds for 
\begin{align}\label{Qav}
\sum_{|m_1|,\ldots ,|m_n|\leq y}|Q(\bm{m},k)|,
\end{align}
 for $k$ being cube-full and there may be an additional restriction on $\bm{m}$. In particular we prove one general estimate for \eqref{Qav} and one that is restricted to the case $m_{n+1}=0$ and the summation ranging over zeroes of either $F_1$ or $F_1$ and $F_2$. The second and more delicate case is necessary to compensate for the weaker pointwise bounds for $Q(m,p)$ and $Q(m,p^2)$ for certain $\bm{m}$. To obtain the necessary saving, we both use additional summation with cancellation that is possible since $m_{n+1}=0$ and the sparseness of integral vectors $\bmh{m}$ for which either one or both of $F_1$ and $F_2$ vanish. 

In subsection \ref{3sec3.3} we consider a third and final type of exponential sum related estimate. Instead over averaging $Q(\bm{m},k)$ over $m_1,\ldots,m_n$ lying in some bounded area of $\mathbb{Z}^n$, we consider the average over a complete system of representatives modulo $k$
\begin{align*}
\sum_{m_1,\ldots, m_n (k)}|Q(\bm{m},k)|.
\end{align*}
As in \cite{h9} we approach these sums in two ways. First we use an elementary second moment bound together with Cauchy's inequality. Afterwards we deal with the more delicate case of showing that these results can be improved for $k$ lying in some subset of the primes. As in the homogeneous case this improvement is based on Katz' work \cite{katz}. Katz's method shows a nontrivial bound as long as the A-number, which calculates the dimension of a certain cohomology group, is not equal to $1$. He gives a formula for the A-number in terms of the Euler characteristics of a perverse sheaf derived from the exponential sum. In section \ref{3secapp}, we find the perverse sheaf and then calculate these Euler characteristics, finding the A-number and verifying it is greater than $1$.
In section \ref{additional results} we import and prove some elementary additional results and finally combine everything to prove Theorem \ref{Thm1.1} in section \ref{3sec5}. As in the homogeneous case the final estimation is done separately for different cases that stem from the different strength of bounds for $Q(\bm{m},p)$ for certain $\bm{m}$.

\begin{rmk} One may ask whether Theorem \ref{Thm1.1} fits into the general framework of Manin's conjecture, i.e., is equivalent to a statement about counting rational points of bounded height on a Fano variety. This is not possible, because there does not exist a double cover of projective space ramified only at a hypersurface of degree $3$ - in fact, there only exist such double covers for hypersurfaces of even degree. Instead, we can express it as a statement of counting rational points on a stack. We can construct the stack as a hypersurface in the (stacky) weighted projective space with weights $2,\dots,2,3$ defined by the weighted-homogenous equation $C(x_1,\dots,x_n) =y^2$ or, equivalently, as the quotient of the scheme of affine nonzero solutions of that equation by the natural action of $\mathbb G_m$. We expect that the height $\sum_{i=1}^n |x_i|^2$ we use to count will occur as the height associated to the line bundle $\mathcal O(1)$ as part of the general formalism in upcoming work of Jordan Ellenberg, Matt Satriano, and David Zureick-Brown defining heights on stacks using vector bundles.\end{rmk}

\begin{rmk} It may be possible to prove \ref{Thm1.1} for $n=5$ conditional on the analytic continuation and Riemann hypothesis for Hasse-Weil L-functions, following the method of Hooley in \cite{h8}. However, we have not checked any of the details of this. First, one needs enable summation over $k$ with cancallellation by showing a suitable version of \cite[Lemma 7]{h8}. Following Hooley's argument, this then most likely leads to the $L$-functions of the double covers of $\mathbb P^{n-1} = \mathbb P^4$ ramified at the cubic hypersurface defined by $C$ and at an arbitrary hyperplane. In addition, an Euler characteristic calculation shows that the local factors of this Hasse-Weil L-function are complicated enough (degree $16$) that even proving analytic continuation seems out of reach at the moment.\end{rmk}

\begin{rmk}
Instead of reducing the number of variables, another direction to extend Theorem \ref{goal} is to consider
\begin{align*}
C(x_1,\ldots,x_6)-y^2=N
\end{align*}
for some fixed $N$. The application of the delta-method is basically also possible in that case. The difficulty lies in showing suitable exponential sum bounds. However, one probably should first consider the easier problem
\begin{align}
    C(x_1,\ldots,x_9)=N
\end{align}
and extend Hooley's \cite{h9} and \cite{h93} to cover this case. 
\end{rmk}

\section{Preparation} \label{3sec2}
We now fix the following notation. Assume we are given $n\geq 6$ and a cubic form $C\in\mathbb{Z}[X_1\ldots X_n]$ that is non-singular over $\mathbb{C}$. We write $\bm{x}=(x_1,\ldots,x_{n+1})$, $\bmh{x}=(x_1,\ldots,x_n)$ and want to count weighted zeroes of
\begin{align*}
f(\bm{x})=C(\bmh{x})-x_{n+1}^2.
\end{align*}
Before we introduce the weight $\Gamma$ that appears in Theorem \ref{Thm1.1} we start with preparatory observations as done in Hooley's \cite{h9} and \cite{h8}. They consist of two parts. The first one is the proof that there are real zeros of a non-singular cubic form at which the Hessian of the form is not zero. In our case a more simple treatment would also suffice but for sake of analogy we use the existing result. 

There is the matrix 
\begin{align*}
\bm{M}(\bm{x}):=\frac{\partial^2 f}{\partial x_i \partial x_j} \text{  } (1\leq i,j\leq n+1)
\end{align*}
whose determinant 
\begin{align*}
H(\bm{x}):=|\bm{M}(\bm{x})|
\end{align*}
is the Hessian covariant of $f(\bm{x})$.
If we denote by
\begin{align*}
\bm{M}_C(\bmh{x}):=\frac{\partial^2 f}{\partial x_i \partial x_j} \text{  } (1\leq i,j\leq n)
\end{align*}
and
\begin{align*}
H_C(\bmh{x}):=|\bm{M}_C(\bmh{x})|
\end{align*}
the contribution of the cubic form to the above objects, then we have

\begin{align}\label{Hesseumrechnung}
H(\bm{x})=2H_C(\bmh{x}).
\end{align}

\begin{lem}
There is a point $\bm{a}=(\bmh{a},0)\in \mathbb{R}^{n+1}$ fulfilling
\begin{align*}
H(\bm{a})\neq 0
\end{align*}
and
\begin{align*}
f(\bm{a})=C(\bmh{a})-0^2=0.
\end{align*}
\end{lem}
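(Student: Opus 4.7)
The plan is to first reduce the statement to a purely cubic assertion, then argue geometrically over $\mathbb{R}$, with the single genuinely algebraic input supplied by the analogous result of Hooley~\cite{h9}.

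Setting $\bm{a}=(\bmh{a},0)$, we have $f(\bm{a})=C(\bmh{a})-0^2=C(\bmh{a})$ and, by \eqref{Hesseumrechnung}, $H(\bm{a})=2H_C(\bmh{a})$. Hence it suffices to exhibit $\bmh{a}\in\mathbb{R}^n$ with $C(\bmh{a})=0$ and $H_C(\bmh{a})\neq 0$. To produce a real zero of $C$, I would restrict $C$ to a generic real line $t\mapsto\bmh{v}+t\bmh{w}$; this yields a nonzero cubic polynomial in $t$, which necessarily has a real root. Because $C$ is nonsingular over $\mathbb{C}$, the gradient $\nabla C$ vanishes only at the origin, so every nontrivial real zero is a smooth point of $V(C)$, and by the implicit function theorem the real locus $V_{\mathbb{R}}(C)\setminus\{0\}$ is locally a real-analytic manifold of dimension $n-1$.

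Next I would argue that $H_C$ does not vanish identically on $V_{\mathbb{R}}(C)$. A smooth hypersurface in $\mathbb{P}^{n-1}$ with $n\geq 3$ is connected and hence irreducible, so $V_{\mathbb{C}}(C)$ is irreducible of complex dimension $n-1$; moreover $V_{\mathbb{R}}(C)$ is totally real in $V_{\mathbb{C}}(C)$ at each smooth point (its real tangent space $\ker\nabla C(\bmh{a})\subset\mathbb{R}^n$ complexifies to the complex tangent space $\ker\nabla C(\bmh{a})\subset\mathbb{C}^n$), and therefore Zariski-dense. Consequently, if $H_C$ vanished on $V_{\mathbb{R}}(C)$ it would vanish on all of $V_{\mathbb{C}}(C)$, and the Nullstellensatz applied to the prime ideal $(C)$ would force $C\mid H_C$.

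The main obstacle is therefore to rule out the divisibility $C\mid H_C$ for nonsingular cubics; this is the sole genuinely algebraic ingredient. I would invoke the analogous statement from Hooley~\cite{h9}, as the excerpt itself notes that the situation here is no harder than the homogeneous case treated there. Once $H_C\not\equiv 0$ on $V_{\mathbb{R}}(C)$ is established, the set of common real zeros of $C$ and $H_C$ is a proper real-analytic subset of the $(n-1)$-dimensional manifold $V_{\mathbb{R}}(C)\setminus\{0\}$ and hence of strictly smaller real dimension. Choosing $\bmh{a}$ in the complement and setting $\bm{a}=(\bmh{a},0)$ then yields the required point.
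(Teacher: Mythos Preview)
Your proposal is correct and uses the same two ingredients as the paper: the reduction $H(\bm{a})=2H_C(\bmh{a})$ from \eqref{Hesseumrechnung}, and an appeal to Hooley~\cite{h9}. The paper's proof, however, is a one-liner: Lemma~1 of \cite{h9} already furnishes a real point $\bmh{a}$ with $C(\bmh{a})=0$ and $H_C(\bmh{a})\neq 0$, so one just inserts this into \eqref{Hesseumrechnung} and is done.

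Your intermediate geometric layer (real zeros via a generic line, Zariski density of $V_{\mathbb{R}}(C)$ in $V_{\mathbb{C}}(C)$, reduction to $C\nmid H_C$) is sound but superfluous here: you end up citing Hooley for the algebraic core anyway, and Hooley's Lemma~1 delivers the full conclusion rather than merely the non-divisibility $C\nmid H_C$. In effect you are partially re-deriving Hooley's lemma before invoking it. If you wanted a genuinely self-contained alternative, you would need to supply an independent proof that $C\nmid H_C$ for nonsingular cubics (for instance via a direct computation such as the Fermat cubic $C=\sum x_i^3$, where $H_C=6^n\prod x_i$ is visibly not divisible by $C$, combined with a specialization/degeneration argument, or via the classical fact that the Hessian hypersurface of a smooth cubic is distinct from the cubic itself). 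Absent that, the cleaner move is simply to cite Hooley's Lemma~1 outright, as the paper does.
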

\begin{proof}
The result follows directly from Lemma 1 of \cite{h9} applied to $C$ and inserted in \eqref{Hesseumrechnung}.
\end{proof}

The second important type of result at this stage are estimates of the number of integral zeros related to the bad cases in the exponential sum estimates. As mentioned before, to compensate for weaker bounds for $Q(\bm{m},p)$ and $Q(\bm{m},p^2)$ we use the sparseness of integral vectors $\bmh{m}$ for which the polynomials $F_1$ and $F_2$ vanish. The exact definition of $F_1$ and $F_2$ is stated in the proof of Lemma \ref{primecase}, we here only need to know that $F_2$ is not an integral multiple of $F_1$. Let 
\begin{align}\label{N1def}
N_1(y,\bmh{r},k):=\# \{\bmh{m}\in \mathbb{Z}^{n} : ||\bmh{m}||\leq y, \bmh{m}\equiv \bmh{r} (k), F_1(\bmh{m})=0  \}
\end{align}
and let further
\begin{align}\label{N2def}
N_2(y,\bmh{r},k):=\# \{\bmh{m}\in \mathbb{Z}^{n} : ||\bmh{m}||\leq y, \bmh{m}\equiv \bmh{r} (k), F_1(\bmh{m})=0, F_2(\bmh{m})=0 \}.
\end{align}
We bound the number of bad vectors with the following lemma.
\begin{lem}\label{Nlemma}
It holds that
\begin{align*}
N_1(y,\bmh{r},k)\ll\Bigl(\frac{y}{k}+1 \Bigr)^{n-1}
\end{align*}
and
\begin{align*}
N_2(y,\bmh{r},k)\ll\Bigl(\frac{y}{k}+1 \Bigr)^{n-2}.
\end{align*}
\end{lem}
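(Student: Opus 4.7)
The plan is to reduce, via an affine change of variables, to counting integer zeros of polynomials in a box of side $Y := y/k + 1$, and then apply the standard induction-on-dimension bound for integer points on an algebraic variety.

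First, since the constraint $\bmh{m} \equiv \bmh{r} \pmod k$ only depends on $\bmh{r}$ modulo $k$, I may assume every coordinate of $\bmh{r}$ lies in $[0,k)$. Writing $\bmh{m} = \bmh{r} + k \bmh{m}'$, the constraint $||\bmh{m}|| \leq y$ forces $||\bmh{m}'|| \ll Y$, and the conditions $F_i(\bmh{m}) = 0$ become $G_i(\bmh{m}') = 0$ for $G_i(\bmh{m}') := F_i(\bmh{r} + k \bmh{m}')$. Since $\bmh{m}' \mapsto \bmh{r} + k\bmh{m}'$ is an invertible affine map over $\mathbb{Q}$, each $G_i$ is a nonzero polynomial of the same degree as $F_i$, and coprimality between $F_1, F_2$ is preserved. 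It therefore suffices to show, with implied constants depending only on $F_1,F_2$, that
\begin{align*}
\#\{\bmh{m}' \in \mathbb{Z}^n : ||\bmh{m}'|| \leq Y,\ G_1(\bmh{m}')=0\} &\ll Y^{n-1}, \\
\#\{\bmh{m}' \in \mathbb{Z}^n : ||\bmh{m}'|| \leq Y,\ G_1(\bmh{m}')=G_2(\bmh{m}')=0\} &\ll Y^{n-2}.
\end{align*}

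For the first estimate I would invoke the textbook bound that a nonzero polynomial in $n$ variables of bounded degree has $O(Y^{n-1})$ integer zeros in $[-Y,Y]^n$. This follows by induction on $n$: writing $G_1$ as a polynomial in $x_n$ with coefficients in $\mathbb{Z}[x_1,\ldots,x_{n-1}]$ and selecting a nonzero leading coefficient $c(x_1,\ldots,x_{n-1})$, whenever $c$ is nonzero at a given $(n-1)$-tuple the variable $x_n$ satisfies a nonzero univariate polynomial of bounded degree and thus takes $O(1)$ values, while the $(n-1)$-tuples on which $c$ vanishes contribute at most $O(Y) \cdot O(Y^{n-2})$ by the inductive hypothesis.

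For the second estimate I would first argue that $G_1, G_2$ are coprime in $\mathbb{C}[X_1,\ldots,X_n]$. As constructed in the proof of Lemma \ref{primecase}, $F_1$ is the defining equation of the dual hypersurface of the smooth cubic $C=0$ and is absolutely irreducible; consequently any polynomial not a scalar multiple of $F_1$ is coprime to it, and $F_2$ is not even an integer multiple of $F_1$ by hypothesis. Thus $V(G_1) \cap V(G_2)$ is an algebraic set of dimension at most $n-2$, and a further induction (or, equivalently, eliminating one variable via the resultant $\mathrm{Res}_{x_n}(G_1, G_2)$, a nonzero polynomial in $n-1$ variables cutting out the projection of $V(G_1,G_2)$) yields the bound $O(Y^{n-2})$.

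The most delicate step is the coprimality claim used for $N_2$: it rests on the absolute irreducibility of the dual hypersurface $F_1 = 0$ of a smooth cubic, a geometric input that must be taken from the construction of $F_1, F_2$ in Lemma \ref{primecase}. Once this is granted, both bounds follow from the routine polynomial-counting induction sketched above.
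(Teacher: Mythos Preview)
Your proposal is correct and follows essentially the same route as the paper: both rely on the irreducibility of $F_1$ (together with $F_2$ not being a multiple of $F_1$) to ensure the common zero locus has codimension $2$, and then invoke the standard induction-on-dimension bound for lattice points on a variety in a box. The only cosmetic difference is that you first substitute $\bmh{m}=\bmh{r}+k\bmh{m}'$ to eliminate the congruence condition before inducting, whereas the paper carries the congruence through the slicing argument directly.
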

\begin{proof} Because $F_1$ is irreducible and $F_2$ is not a multiple of $F_1$, the vanishing set of $F_1$ and $F_2$ is a Zariski closed subset of codimension $2$ in $\mathbb A^n$.  This follows from the general fact that a Zariski closed subset of codimension $c$ in $\mathbb A^n$ contains $\ll \Bigl(\frac{y}{k}+1 \Bigr)^{n-c}$ integer points whose coefficients are $\leq y$ congruent to $k$ mod $m$. This can be checked by induction on $c$ and $m$. For all but $O(1)$ numbers, restricting the first coordinate to that value produces a Zariski closed subset of codimension $c$. For the remaining $O(1)$ numbers, it is codimension $c-1$. Hence the bound for $(n,c)$ is at most the total number of values for the first coordinate, which is $2\frac{y}{k}+1$, times the bound for $(n-1,c)$ plus $O(1)$ times the bound for $(n-1,c-1)$, both of which are $\Bigl(\frac{y}{k}+1 \Bigr)^{n-c}$ by induction, giving the stated bound.

\end{proof}

\subsection{Application of the delta-method}
We start our analytical treatment by defining the weight we want to count the solutions of \eqref{Ziel} with as
\begin{align*}
\Gamma(\bm t)=\prod_{1\leq i\leq n+1}\gamma(t_i),
\end{align*}
where
\begin{align*}
\gamma(t)=\begin{cases} e^{-2/(1-t^2)}, & \text{if } |t|< 1\\ 0, & \text{if } |t|\geq 1.
\end{cases}
\end{align*}
We apply the considerations of the previous section to choose a point $\bm{a}$ that is a sufficiently large scalar multiple $\lambda \bm{a'}$ of a given real point $\bm{a'}$ such that $H(\bm{a'})\neq 0$ and $f(\bm{a'})=C(\bmh{a}')=0$.
The smoothed counting function of Theorem \ref{Thm1.1} is given by
\begin{align}\label{Updef}
\Upsilon(X)=\sum_{f(\bm{x})=0} \Gamma((X^{-1}\bmh{x},X^{-\frac{3}{2}}x_{n+1})-\bm{a}).
\end{align}
It counts with a certain weight the number of integral solutions of $C(\bmh{x})=x_{n+1}^2$ in a hyperparallelepiped with side length 2X in the first $n$ coordinates and length $2X^{\frac{3}{2}}$ in the last one.

To translate the counting problem into the realm of analysis, we apply Heath-Brown's delta-method as given in Theorem 2 of \cite{hb1}. We get
\begin{align}\label{UP1}
\Upsilon(X)=\frac{c_N}{N^2}\sum_{\bm{m}}\sum_{k=1}^\infty k^{-n-1}Q(\bm{m},k)I_k^{(0)}(\bm{m})
\end{align} 
for all $N$ exceeding $1$, where
\begin{align*}Q(\bm{m},k)= {\sum_{h(k)}}^*\sum_{\bm{l}(k)}e_k(hf(\bm{l})+\bm{m}.\bm{l})
\end{align*}
is the important exponential sum, 
\begin{align*}I_k^{(0)}(\bm{m})=\int_{\mathbb{R}^{n+1}}\Gamma((X^{-1}\bmh{x},X^{-\frac{3}{2}}x_{n+1})-\bm{a})h\Bigl(\frac{k}{N},\frac{f(\bm{x})}{N^2}\Bigr)e_k(-\bm{m}.\bm{x})d\bm{x},
\end{align*}
and the objects $h(x,y)$ and $c_N$ appear explicitly in \cite{hb1}.
Furthermore by \cite[Theorem 1]{hb1} we have for any $A>0$
\begin{align}\label{c_N}
c_N=1+O_A(N^{-A}).
\end{align} 
Our considerations differ in one point from the case of a cubic form as considered in Hooley's application of the delta-method in \cite{h8}. The last variable is weighted differently which is a natural consequence of it appearing only in degree two and this changes the following transformation. We fix $N=X^{\frac{3}{2}}$ and transform $I_k^{(0)}$ into
$$
X^{n+\frac{3}{2}}\int_{\mathbb{R}^{n+1}}\Gamma(\bm{x}-\bm{a})h\Bigl(\frac{k}{N},f(\bm{x})\Bigr)e_k(-\bm{m}.(X\bmh{x},X^{\frac{3}{2}}x_{n+1}))d\bm{x}=X^{n+\frac{3}{2}}I_k(\bm m),
$$
say.

 By Lemma 4 of \cite{hb1} there exists a constant $A_1$ such that \begin{align*}
 I_k(\bm{m})=0
\end{align*}  
for $k>A_1N$. In \eqref{UP1} we now split in the remaining range $k\leq A_1 N$ the summation over $\bm{m}$ in \eqref{UP1} into different depending on the conditions $m_{n+1}=0$, $F_1(\bmh{m})=0$, and $F_2(\bmh{m})=0$. As much of this paper, this split is motivated by the worse estimates for $Q(\bm{m},p)$ and $Q(\bm{m},p^2)$ in different cases. We recall the notation $\bm{m}=(m_1,\ldots,m_{n+1})$, $\bmh{m}=(m_1,\ldots,m_{n})$ and since the case $m_{n+1}=0$ is of particular interest, we also write $(\bmh{m},0)=(m_1,\ldots,m_n,0)$. We continue with \eqref{UP1} to get
\begin{align}
&\Upsilon(X)= \nonumber\\
=& c_{N} X^{n-\frac{3}{2}}\sum_{\bm{m}}\sum_{k\leq A_1 N} k^{-n-1}Q(\bm{m},k)I_k(\bm{m}) \nonumber \\ 
=&c_{N}X^{n-\frac{3}{2}}\Bigl(\sum_{k\leq A_1 N}k^{-n-1}Q(\bm{0},k)I_k(\bm{0}) + \sum_{k\leq A_1 N}\sum_{\substack{\bmh{m}\neq 0\\F_1(\bm{\hat{m}})\neq0}}k^{-n-1}Q((\bmh{m},0),k)I_k(\bm{m})\nonumber \\
&+\sum_{k\leq A_1 N}\sum_{\substack{\bm{m}\\m_n\neq0}}k^{-n-1}Q(\bm{m},k)I_k(\bm{m})+\sum_{k\leq A_1 N}\sum_{\substack{\bmh{m}\neq 0\\ \substack{F_1(\bm{\hat{m}})=0\\ F_2(\bmh{m})=0}}}k^{-n-1}Q((\bmh{m},0),k)I_k(\bm{m})\nonumber \\
&+\sum_{k\leq A_1 N}\sum_{\substack{\bmh{m}\neq 0\\ \substack{F_1(\bm{\hat{m}})=0\\ F_2(\bmh{m})\neq 0}}}k^{-n-1}Q((\bmh{m},0),k)I_k(\bm{m})\nonumber\Bigr)\nonumber \\ 
\label{UPdef} =&c_{N}X^{n-\frac{3}{2}}\Bigl(\Upsilon_1(X)+\Upsilon_2(X)+\Upsilon_3(X)+\Upsilon_4(X)+\Upsilon_5(X)\Bigr),
\end{align}
say. Here $F_1$ and $F_2$ are the polynomials appearing in \eqref{N1def} and \eqref{N2def}. We expect that the largest contribution arises from $\Upsilon_1(X)$ and that this is asymptotically given by a product of the \emph{Singular Series} $\mathfrak{S}$ and the \emph{Singular Integral} $\mathcal{J}$. This product is bounded and, given the existence of $p$-adic and real solutions, does not vanish. Throughout Section \ref{3sec5} we show that the other terms are $O((\log X)^{-\delta})$ and this proves the asymptotics stated in Theorem \ref{Thm1.1}. 

\subsection{Bounds for the Integral}
In Section 5 of \cite{h8} Hooley analyses the behaviour of $I_k(\bm{m})$ and $\frac{\partial I_k(\bm{m})}{\partial k}$ in the case of a cubic form. The estimates related to $\frac{\partial I_k(\bm{m})}{\partial k}$ are necessary for summation with cancellation over $k$, which we do not intend to do. They could not, at least for all $\bm{m}$, be applied in our case, but the calculations for $I_k(\bm{m})$ and the use of Lemma 7 of \cite{h9} do not depend on $f$ being homogeneous. We only need to consider the different substitution by defining
$$
\lVert \bm{m} \rVert_X := \lVert(\hat{\bm{m}},X^{\frac{1}{2}}m_{n+1}) \rVert .
$$
We then can set for some $M>0$
\begin{align}\label{Jdef}
&J(\bm{m},Y)=\\
=&\begin{cases} 1 & \!\!\text{if } \lVert \bm{m} \rVert_X\leq Y/X, \\
\log^{n+1}(2X \lVert \bm{m} \rVert_X/Y)(Y/X\lVert \bm{m} \rVert_X)^{\frac{1}{2}n-\frac{1}{2}} &\!\! \text{if } Y/X<\lVert \bm{m} \rVert_X\leq N/X, \\
\log^{n+1}(2X \lVert \bm{m} \rVert_X/Y)(Y/X\lVert \bm{m} \rVert_X)^{\frac{1}{2}n-\frac{1}{2}}(N/X\lVert \bm{m} \rVert_X )^M &\!\! \text{if } N/X<\lVert \bm{m} \rVert_X.
\end{cases} \nonumber
\end{align}
With this notation, Hooley's proof can be applied and gives us the following estimates  for $I_k(\bm{m})$.
\begin{lem} \label{Integral}
Let $M>0$. For $\frac{1}{2}Y\leq k <Y$ and $Y\leq A_1 N= A_1 X^{\frac{3}{2}}$ it holds that
\begin{align*}
I_k(\bm{m})=O_M(J(\bm{m},Y)).
\end{align*}

\end{lem}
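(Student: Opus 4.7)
The plan is to transplant the argument of Section 5 of \cite{h8} to the inhomogeneous setting. The essential observation is that, although $f(\bm{x})=C(\hat{\bm{x}})-x_{n+1}^2$ is no longer a homogeneous cubic, the weight $\Gamma(\bm{x}-\bm{a})$ is still a compactly-supported smooth function and $h$ is unchanged, so Hooley's oscillatory integral estimates apply once the effective frequency of the exponential is correctly identified. The formula $J(\bm{m},Y)$ is then exactly what one obtains from standard integration-by-parts arguments.

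First, I would observe that the linear phase in $I_k(\bm{m})$ has coefficients $m_i X/k$ for $i\leq n$ and $m_{n+1}X^{3/2}/k$ for the last coordinate. When one integrates by parts in the direction of the largest component of this frequency vector, the saving is governed by $(X/k)\max(|\hat{\bm{m}}|_\infty,\,X^{1/2}|m_{n+1}|)=(X/k)\lVert\bm{m}\rVert_X$, which explains the appearance of $\lVert\bm{m}\rVert_X$ in \eqref{Jdef}. Equivalently, after the change of variable $y_{n+1}=X^{1/2}x_{n+1}$ (which contributes a Jacobian $X^{-1/2}$), the exponential becomes $e_k(-(X/k)\bm{M}\cdot\bm{y})$ with $\bm{M}=(\hat{\bm{m}},X^{1/2}m_{n+1})$ and $\lVert\bm{M}\rVert=\lVert\bm{m}\rVert_X$, while the weight becomes $\Gamma((\hat{\bm{y}},X^{-1/2}y_{n+1})-\bm{a})$ and the cubic factor becomes $\tilde{f}(\bm{y})=C(\hat{\bm{y}})-X^{-1}y_{n+1}^2$. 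Since $a_{n+1}=0$, the rescaled weight is compactly supported in a region of size uniformly bounded in $X$ in every coordinate.

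Next I would invoke Lemma 7 of \cite{h9}, applied to the rescaled integral with smooth cut-off, composed function $h(k/N,\tilde{f}(\bm{y}))$, and frequency $(X/k)\bm{M}$. The three branches of $J(\bm{m},Y)$ correspond precisely to the three regimes of that lemma. When $\lVert\bm{m}\rVert_X\leq Y/X$, one uses the trivial bound. When $Y/X<\lVert\bm{m}\rVert_X\leq N/X$, repeated integration by parts in the direction of $\bm{M}$ produces the factor $(Y/X\lVert\bm{m}\rVert_X)^{n/2-1/2}$ together with logarithmic losses coming from derivatives of $h$. When $\lVert\bm{m}\rVert_X>N/X$, one can integrate by parts $M$ further times, using that $h(k/N,\cdot)$ is supported in a bounded set, to obtain any desired polynomial saving $(N/X\lVert\bm{m}\rVert_X)^M$.

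The main obstacle, and the only reason to state a separate lemma rather than cite \cite{h8} directly, is to verify that the smoothness hypotheses of Hooley's argument remain satisfied after rescaling: derivatives of the weight in the $y_{n+1}$-direction pick up harmless powers of $X^{-1/2}$, and $\tilde{f}$ and its derivatives stay bounded on the support of the weight uniformly in $X$. Once these routine checks are carried out, Hooley's proof goes through verbatim with $\lVert\bm{m}\rVert$ replaced by $\lVert\bm{m}\rVert_X$, yielding the claimed bound.
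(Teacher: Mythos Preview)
Your approach is essentially the same as the paper's: both invoke Hooley's analysis from Section~5 of \cite{h8} together with Lemma~7 of \cite{h9}, observing that homogeneity of $f$ plays no role and that the only modification needed is the replacement of $\lVert\bm{m}\rVert$ by $\lVert\bm{m}\rVert_X$ to account for the $X^{3/2}$ scaling in the last coordinate. The paper's own argument is in fact nothing more than this citation plus the definition of $\lVert\bm{m}\rVert_X$; you supply considerably more detail.

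One point to correct: your claim that after the substitution $y_{n+1}=X^{1/2}x_{n+1}$ ``the rescaled weight is compactly supported in a region of size uniformly bounded in $X$ in every coordinate'' is false. Since $a_{n+1}=0$, the factor $\gamma(X^{-1/2}y_{n+1})$ is supported on $|y_{n+1}|<X^{1/2}$, which grows with $X$. The Jacobian $X^{-1/2}$ compensates at the level of the trivial bound, and the $X^{-1/2}$ factors in the $y_{n+1}$-derivatives compensate in the integration-by-parts steps, so the argument can be salvaged, but it is not as clean as you suggest. Your first description --- integrating by parts directly in the original $\bm{x}$-coordinates and identifying the effective frequency as $(X/k)\lVert\bm{m}\rVert_X$ --- is the correct and simpler route, and is what the paper intends. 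Either drop the ``equivalently'' paragraph or rewrite it without the bounded-support claim.
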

It is an advantage of the delta-method that this is sufficient analytic input for proving Theorem \ref{Thm1.1}

\section{Exponential Sums} \label{3sec3}
We now state and prove estimates for exponential sums that will be later combined with the integral bounds to bound $\Upsilon_i$ for $2\leq i \leq 5$. The results are analogues of Lemma 10, 11, 12, equations (95), (96), (103), (107), (108), and (110) of \cite{h9} and Lemma 60 of \cite{h93}. 

\subsection{Preparation, Prime, and Prime square case}\label{3sec3.1}
We now start with the study of the exponential sums. Recalling
\begin{align*}Q(\bm{m},k)= {\sum_{h(k)}}^*\sum_{\bm{l}(k)}e_k(hf(\bm{l})+\bm{m}.\bm{l}),
\end{align*} where $f$ is a cubic form minus a square as defined in \eqref{Ziel}. In what follows there will be a difference in the effect of $m_1,\ldots, m_n$ and of $m_{n+1}$. As described in the introduction $m_{n+1}$ behaves similar to $r$ in \cite{h9} and \cite{h93}. In particular the cases $m_{n+1}=0$ and zeroes of $F_1$ and $F_2$ need separate consideration.

The exponential sum $Q(\bm{m},k)$ behaves pseudo-multiplicatively in $k$, i.e. we have for $(k,k')=1$ 
\begin{align}\label{pseudomult}
Q(\bm{m},kk')=Q(\overline{k'}\bm{m},k)Q(\overline{k}\bm{m},k').
\end{align}
As before we write $(\bmh{m},0)=(m_1\ldots,m_n,0)$ and note that  $|Q((\bmh{m},0),k)|$ is multiplicative in $k$ as for $(a,k)=1$
\begin{align} \label{betragmult}
|Q(a(\bmh{m},0),k)|=|Q((\bmh{m},0),k)|.
\end{align}
These properties allow us to separately deal with the cases of $k$ being either prime or the square of a prime, which will both be estimated in this subsection.
The central idea behind all of our estimates related to these exponential sum is that they should behave as $\sqrt{k}$ times an estimate in the same case of a cubic form in $n$ variables.

The first result we require is a point-wise estimate of $Q(\bm{m},p)$ depending on $m_{n+1}$, $F_1(\bm{m})$ and $F_2(\bm{m})$. For details of some results on sheaves that are used in the proof, see subsection \ref{3secapp}.

\begin{lem}\label{primecase}
There exists a polynomial $F_2(\bmh{m})$ that is not a multiple of $F_1(\bmh{m})$, such that we have
\begin{align*}
Q(\bm{m},p)=\begin{cases} 
O(p^{\frac{1}{2}n+1}(m_{n+1},p))^{\frac{1}{2}} & \text{if } m_{n+1}\neq 0, \\
O(p^{\frac{1}{2}n+1}(F_1(\bm{\hat{m}}),F_2(\bmh{m}),p))^{\frac{1}{2}} & \text{if } m_{n+1}=0.
\end{cases}
\end{align*}
\end{lem}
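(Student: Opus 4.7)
The plan is to split by parity of $p$ and then by whether $m_{n+1}$ vanishes modulo $p$. For $p=2$ (and any fixed finite collection of small primes) the bound is absorbed into the implicit constant via the trivial estimate $|Q(\bm{m},p)|\leq p^{n+2}$. From now on assume $p$ is odd, so that \eqref{lnausgewertet} evaluates the Gauss sum in $l_{n+1}$ and reduces the task to bounding
\[
S^{\sharp}(\bm{m},p)=\sum_{h(p)}^{*}\legendre{h}{p}e_{p}(\overline{4h}m_{n+1}^{2})\sum_{\bmh{l}(p)}e_{p}(hC(\bmh{l})+\bmh{m}.\bmh{l})
\]
by $O(p^{(n+1)/2})$ generically, since $|Q(\bm{m},p)|=\sqrt{p}\,|S^{\sharp}(\bm{m},p)|$.

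For the first case $p\nmid m_{n+1}$, after absorbing $m_{n+1}^{2}$ into $h$ via $h\mapsto m_{n+1}^{2}h$, the phase in $S^{\sharp}$ becomes a genuine mixture: a Kloosterman-type inverse in $h$, a cubic in $\bmh{l}$ coupled to $h$, a linear twist in $\bmh{l}$, and a Legendre character in $h$. I would invoke the theorem of Katz from \cite{katzbhb}, whose geometric nondegeneracy hypotheses are guaranteed by the nonsingularity of $C$, to obtain square-root cancellation $O(p^{(n+1)/2})$, from which the claimed bound on $Q(\bm{m},p)$ follows.

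The more delicate case is $p\mid m_{n+1}$, where the additive twist in $h$ collapses to $1$ and all cancellation in $h$ must come from $\legendre{h}{p}$. I would define $F_{1}$ as the polynomial whose zero locus is the dual variety of $\{C=0\}$, and take $F_{2}$, following the inductive construction of \cite{K-L}, to vanish on a specific subvariety $V$ of the dual variety that is of codimension at least two in $\mathbb{A}^{n}$ and contains the singular locus of $\{F_{1}=0\}$. With this choice, $F_{2}$ is visibly not a multiple of $F_{1}$, and the remaining sum
\[
S(\bmh{m},p)=\sum_{h(p)}^{*}\legendre{h}{p}\sum_{\bmh{l}(p)}e_{p}(hC(\bmh{l})+\bmh{m}.\bmh{l})
\]
would be analysed via the $\ell$-adic formalism set up in subsection \ref{3secapp}: I would form the perverse sheaf whose trace function is the Fourier transform in $\bmh{l}$ of the cubic phase, twisted by the Legendre character in $h$, and whose singular support lies inside the dual variety. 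Combining the purity and irreducibility results of Katz \cite{katz,katzmix} with the Fouvry--Katz bounds \cite{fok} then yields $S(\bmh{m},p)=O(p^{(n+1)/2})$ whenever $F_{1}(\bmh{m})\not\equiv 0$ or $F_{2}(\bmh{m})\not\equiv 0\pmod p$.

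In the remaining subcase $F_{1}(\bmh{m})\equiv F_{2}(\bmh{m})\equiv 0 \pmod{p}$, the desired bound is weaker by a factor of $p^{1/2}$, and this slack is exactly what the classical Weil bound $\sum_{\bmh{l}}e_{p}(hC(\bmh{l})+\bmh{m}.\bmh{l})\ll p^{(n+1)/2}$ applied pointwise in $h$ and summed trivially over $h$ provides. The principal obstacle of the whole argument is the sheaf-theoretic analysis in the $p\mid m_{n+1}$ case: the subvariety $V$, and hence $F_{2}$, must be chosen precisely so that the perverse sheaf appearing in the Katz/Fouvry--Katz step is geometrically simple and nontrivial away from $V$; this is why the intricate construction of \cite{K-L} is invoked rather than the naive choice of $\mathrm{Sing}\{F_{1}=0\}$, which would not suffice.
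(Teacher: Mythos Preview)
Your outline matches the paper's proof: trivial treatment of small primes, the Gauss-sum reduction \eqref{lnausgewertet}, Katz \cite{katzbhb} for $p\nmid m_{n+1}$, and a sheaf-theoretic argument combining Fouvry--Katz \cite{fok} with Katz \cite{katzmix} for $m_{n+1}\equiv 0$. Two points need sharpening.

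First, in the case $m_{n+1}\equiv 0\pmod p$ the paper carries out the $h$-sum before invoking any sheaf machinery: since $\sum_{h}^{*}\legendre{h}{p}e_p(hC(\bmh l))=\legendre{C(\bmh l)}{p}\,g_p$ with $|g_p|=\sqrt p$, one gets
\[
|Q((\bmh m,0),p)|=p\,\Bigl|\sum_{\bmh l(p)}\legendre{C(\bmh l)}{p}e_p(\bmh m.\bmh l)\Bigr|,
\]
and it is this $n$-variable mixed character sum to which the Fouvry--Katz stratification \cite[Theorems 3.1, 4.4]{fok} is applied, with the perverse sheaf $K=j_!\mathcal L_\chi(C)[n]$ of subsection~\ref{3secapp}. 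The polynomial $F_2$ is then simply any polynomial, not a multiple of $F_1$, vanishing on $Z\cap\{F_1=0\}$, where $Z$ is the union of the Fouvry--Katz strata of dimension $\leq n-2$; the inductive construction of \cite{K-L} enters only inside the proof of the stratification theorem, not in the choice of $F_2$. For $F_1(\bmh m)\not\equiv 0$ the bound comes instead from \cite[Theorem 1.1(1)]{katzmix}.

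Second, your stated Weil--Deligne bound for the inner sum is off: for the $n$-variable sum $\sum_{\bmh l}e_p(hC(\bmh l)+\bmh m.\bmh l)$ with nonsingular cubic $C$ one has $O(p^{n/2})$, not $O(p^{(n+1)/2})$. With your exponent, trivial summation over $h$ would yield only $|Q|\ll p^{n/2+2}$, which fails to meet the required $O(p^{n/2+3/2})$ in the bad case $p\mid(F_1(\bmh m),F_2(\bmh m))$. With the correct exponent the argument goes through exactly as you say.
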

\begin{proof}
 The estimate in the homogeneous case was proved by Heath-Brown in Section 5 of \cite{hb2}. We start by noting that it is enough to only consider those primes for which neither $p|6$ nor is $C$ singular over $\mathbb{F}_p$, as the finitely many exceptions can be dealt with by increasing the implied constants. We recall the evaluation of the sum over $l_{n+1}$ as given in \eqref{lnausgewertet}
 \begin{align*}
|Q(\bm{m},p)|=|\sqrt{p}{\sum_{h(p)}}^*\legendre{h}{p}e_p(\overline{4h}m_{n+1}^2)\sum_{\bmh{l}(p)}e_p(hC(\bmh{l})+\bmh{m}.\bmh{l})|,
\end{align*}
We follow Heath-Brown and apply the triangle inequality on the sum over $h$ to get 
\begin{align*}
|Q(\bm{m},p)|\leq \sqrt{p}{\sum_{h(p)}}^*|\sum_{\bmh{l}(p)}e_p(hC(\bmh{l})+\bmh{m}.\bmh{l})|.
\end{align*} 
The innermost sum can be estimated by using a consequence of Deligne's proof of the Riemann-Hypothesis for varieties over finite fields as stated in Theorem 8.4 of \cite{Deligne}. This implies
\begin{align*}
Q(\bm{m},p)=O(p^{\frac{1}{2}n+\frac{3}{2}}).
\end{align*}
We have to show this bound can be improved upon by a factor of size $\sqrt{p}$ when \emph{either}
\begin{itemize}
\item $p\nmid m_{n+1}$ \emph{ or}
\item $m_{n+1}=0$ and $p$ does not divide at least one of $F_1(\bmh{m})$ and $F_2(\bmh{m})$.
\end{itemize}
The additional character seems to make the elementary reduction Heath-Brown uses and further direct application of Deligne's result not possible. We instead use results from Katz \cite{katzbhb} or Fouvry and Katz \cite{fok}.

In the case that $p\nmid  m_{n+1}$ we want to apply Corollary 8.2 of \cite{katzbhb}. To do so, we check that the required conditions are met. In Katz' notation the bounded object is
\begin{align*}
\sum_{h \in  k^\times}\sum_{\bmh{l}}\chi(h)\psi\bigl(h^AC(\bmh{l})+g(\bmh{l})+P_B(1/t)\bigr),
\end{align*}
where $\chi$ is a multiplicative and $\psi$ an additive character of the finite field $k$. Furthermore $C(\bmh{l})$ and $g(\bmh{l})$ are polynomials in $n$ variables of degree $d$ and $e$ respectively. Finally, $P_B$ is a one variable polynomial of degree $B$. In our case we can translate Katz' notation as follows
\begin{align*}
k&= \mathbb F_p\\
\chi(h)&=\legendre{h}{p}\\
\psi(\bullet)&=e_p(\bullet)\\
C(\bmh{l})&=C(\bmh{l})\\
g(\bmh{l})&=\bmh{m}.\bmh{l}\\
P_B(1/h)&=\overline{4}m_{n+1}^2 1/h\\
h^A&=h.
\end{align*}
This means
\begin{align*}
d&=3\\
e&=1\\
A&=1\\
B&=1.
\end{align*}

Corollary 8.2 of \cite{katzbhb} has the hypotheses that $p$ is prime to $AB(A+B)$, $d$ is prime to $p$, $C$ is a Deligne polynomial, and $e< (B/(A+B))d$. (We are also allowed to take $ \deg P_B < B$ if we assume in addition that $C$ is an affine-Dwork regular Deligne polynomial.) The conditions on $p$ are satisfied as soon as $p>3$, which we can ensure by raising the constant if necessary. We have $(B/(A+B))d=\frac{3}{2}$ and as $1<\frac{3}{2}$ the condition on $e$ is met. Because $C$ is homogeneous of degree prime to $p$ and nonsingular, its leading term is nonsingular, and so it is a Deligne polynomial. Thus we obtain the case $m_{n+1}\neq 0$ of Lemma \ref{primecase}.

For $m_{n+1}=0$ we sum over $h$ in \eqref{lnausgewertet} to arrive at
\begin{align*} 
|Q((\bmh{m},0),p)|=p\Bigl|\sum_{\bmh{l}(p)}\legendre{C(\bmh{l})}{p}e_p(\bmh{m}.\bmh{l})\Bigr|.
\end{align*}

We now use the notation of \cite[section 3]{fok}, except that we continue to use the coordinates $\bm{m}$ and $\bm{l}$ instead of their $(x_1,\dots,x_n)$ and $(h_1,\dots,h_n)$. We work in affine $n$-space $\mathbb A^n_{\mathbb Z}$. We take $V$ to be all of $\mathbb A^n_{\mathbb Z}$, of dimension $d=n$. Let $f$ be zero.  The stratification $\mathscr V$ of $\mathbb A^n$ will have three strata: the open set where the cubic form $C$ is nonzero, the vanishing locus of $C$ except for the origin, and the origin. Let $j$ be the inclusion of the open set where $C$ is nonzero, and take $K=j_! \mathcal L_\chi (C)[n] $ for $\chi$ the quadratic character. Then $K$ is adapted to this stratification and, by Lemmas \ref{lisse-pure} and \ref{middle-extension} of subsection \ref{3secapp}, (fibrewise) perverse and pure of weight $n$, hence satisfies assumptions 1) and 2) of \cite[section 3]{fok}. Here assumption 1) is that $K$ is (fibrewise) semiperverse and 2) is that it is mixed of weight $\leq d$.

Then \cite[Theorem 3.1]{fok} produces a stratification of the dual $\mathbb A^n_{\mathbb Z}$ into strata $H_i$ of dimensions $N_i$.  This may require throwing out finitely many primes.

Next we will apply \cite[Theorem 4.4]{fok}. We first verify the assumptions of 4.0.1,4.0.2., and 4.0.3 of \cite{fok}. Assumption 4.0.1 is that $V[1/D]$ is closed in $\mathbb A^n_{\mathbb Z}[1/D]$, smooth, and surjective with geometrically connected fibers over $\mathbb Z[1/D]$. Because $V= \mathbb A^n$, these are all satisfied for $D=1$.  Because $V$ is smooth and surjective, 4.0.1 is satisfied with $D=1$. Assumption 4.0.2 repeats assumptions 1) and 2) of \cite[section 3]{fok} and assumption 4.0.3 strengthens this to $K$ being perverse, geometrically irreducible and pure of weight $n$ fiberwise away from primes dividing $\ell MD$, again by Lemmas \ref{lisse-pure} and \ref{middle-extension} these conditions are satisfied.

By Lemma \ref{a-number} the $A$-number defined in \cite[p. 127]{fok} as the generic rank of the Fourier transform of $K$ is equal to $2^{n-1}$ and thus is nonzero. This verifies the assumptions of \cite[Theorem 4.4]{fok}. 

Hence  for any finite field $k= \mathbb F_p$ of characteristic sufficiently large $6\ell$, for the nontrivial additive character $e_p: \mathbb F_p \to \mathbb C^\times$, and any point $\bm{m}\in H(k) =\mathbb F_p^n$ lying in a strat $H_i$ of dimension $\eta_i$, we have

\[ \left | \sum_{\bm{l} \in \mathbb F_p^n } \left( \frac{C(\bm{l})}{p} \right) e_p  ( \bm{m} \cdot \bm{l}  ) \right| \leq  O(1)  \times p^{\frac{ \sup d, d+n-1 - \eta_i}{2}} .\]
 
In particular, because $d=n$, the bound is $\leq  O( p^{\frac{1}{2}n})$  for any stratum of dimension $\geq n-1$.

Let $Z$ be the union of all the strata of dimension $\leq n-2$, and let $Z'$ be the intersection of $Z$ with the vanishing locus of $F_1$. Because $Z$ has dimension $\leq n-2$ and the vanishing locus of $F_1$ has dimension $d-1$, there exists a polynomial, not a multiple of $F_1$, that vanishes on $Z'$. Let $F_2$ be such a polynomial.

Then for any point $\bm{m} $ except those where $F_1(\bm{m})=F_2(\bm{m})=0$, we either have $F_1(\bm{m}) \neq 0$ or $\bm{m} \not \in Z$. In the first case, the remaining estimate of Lemma \ref{primecase} is a consequence of \cite[Theorem 1.1(1)]{katzmix}. In the second case, it follows from the estimate we have just given.

\end{proof}

The case $k=p^2$ is not considered by Heath-Brown and in the case of cubic forms is \cite[Lemma 11]{h9}. We show the following analogue by reducing it to Hooley's result.
\begin{lem} \label{primesquare}
We have
\begin{align*}
Q(\bm{m},p^2)=\begin{cases} 
O(p^{n+2}(m_n,p)) & \text{if } m_{n+1}\neq 0 \\
O(p^{n+2}(F_1(\bm{\hat{m}}),p)) & \text{if } m_{n+1}=0
\end{cases}
\end{align*}
\end{lem}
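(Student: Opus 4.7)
The plan is to reduce the sum $Q(\bm{m},p^2)$ to Hooley's exponential sum $Q'_r(\bmh{m},p^2)$ for cubic forms, and then quote \cite[Lemma 11]{h9} directly. As in the proof of Lemma \ref{primecase} we may absorb finitely many primes (in particular $p=2$) into the implied constant, so from now on assume $p$ is odd and coprime to the discriminant of $C$.

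First I would isolate the $l_{n+1}$-variable in
\[
Q(\bm{m},p^2)={\sum_{h(p^2)}}^*\sum_{\bmh{l}(p^2)}e_{p^2}(hC(\bmh{l})+\bmh{m}.\bmh{l})\sum_{l_{n+1}(p^2)}e_{p^2}(-hl_{n+1}^2+m_{n+1}l_{n+1}).
\]
Since $(2h,p)=1$, the inverse $\overline{2h}$ exists modulo $p^2$, and completing the square yields
\[
\sum_{l_{n+1}(p^2)}e_{p^2}(-hl_{n+1}^2+m_{n+1}l_{n+1})=e_{p^2}(\overline{4h}\,m_{n+1}^2)\cdot G_{p^2}(-h),
\]
where $G_{p^2}(-h)=\sum_{x(p^2)}e_{p^2}(-hx^2)$. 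For odd $p$ and $(h,p)=1$ the standard evaluation (splitting $x=y+pz$ modulo $p^2$) gives $G_{p^2}(-h)=p$, independent of $h$. Observe that no Jacobi symbol survives, since $\legendre{h}{p^2}=1$; this is what makes the prime-square case genuinely simpler than the prime case.

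Using $\overline{4h}\equiv\overline{h}\cdot\overline{4}\pmod{p^2}$, the identity above rearranges to
\[
Q(\bm{m},p^2)=p\cdot{\sum_{h(p^2)}}^*e_{p^2}(\overline{h}r)\sum_{\bmh{l}(p^2)}e_{p^2}(hC(\bmh{l})+\bmh{m}.\bmh{l})=p\cdot Q'_{r}(\bmh{m},p^2),
\]
where $r:=\overline{4}\,m_{n+1}^2$. This is precisely Hooley's exponential sum, evaluated with a parameter that knows about $m_{n+1}$ only through $m_{n+1}^2$.

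Finally I would invoke \cite[Lemma 11]{h9}, which bounds $Q'_r(\bmh{m},p^2)$ for a nonsingular cubic form in $n$ variables by $O(p^{n+1}(r,p))$ in general and by $O(p^{n+1}(F_1(\bmh{m}),p))$ when $r=0$. When $m_{n+1}\neq 0$ we have $(r,p)=(m_{n+1}^2,p)=(m_{n+1},p)$, and when $m_{n+1}=0$ we have $r=0$, so both cases of Lemma \ref{primesquare} follow after multiplying by the prefactor $p$ from the Gauss sum. The only point requiring a moment's verification is that Hooley's proof of his Lemma 11, stated there for $n=9$, uses only the nonsingularity of $C$ and therefore goes through verbatim for any $n\geq 6$; this is the main conceptual obstacle, and it should amount to rereading his argument to confirm no numerical dependence on $n$ is used beyond what is implicit in the exponent.
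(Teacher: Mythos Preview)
Your proposal is correct and follows essentially the same route as the paper's proof: evaluate the quadratic Gauss sum over $l_{n+1}$ modulo $p^2$ (where the Jacobi symbol becomes trivial), identify the remaining sum as Hooley's $Q'_r(\bmh{m},p^2)$ with $r=\overline{4}\,m_{n+1}^2$, and then apply \cite[Lemma~11]{h9}. The paper is slightly terser, invoking the earlier display \eqref{lnausgewertet} rather than recomputing the Gauss sum, but the argument is the same.
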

\begin{proof}
To prove this we consider \eqref{lnausgewertet}, which was valid for $(2,k)=1$ and note that for $k=p^2$ the Jacobi symbol is always $1$. Consequently we have for all odd primes

\begin{align*}
|Q(\bm{m},p^2)|=p|{\sum_{h(p^2)}}^*e_{p^2}(\overline{4h}m_{n+1}^2)\sum_{\bmh{l}(p^2)}e_{p^2}(hC(\bmh{l})+\bmh{m}.\bmh{l})|.
\end{align*}
This is the same sum that is estimated in \cite[Lemma 11]{h9}, having $\bar{4}m_{n+1}^2$ in the place of Hooley's $r$. We can apply his result and get the stated bound for odd primes. The remaining case can again be absorbed in the implied constants.
\end{proof}

\subsection{Averages with Cube-Full Modulus}\label{3sec3.2}
For cube-full $l_3$ we now look at sums of the type
\begin{align*}
\sum_{\lVert \bm{\hat{m}}\rVert\leq y}|Q(\bm{m},l_3)|
\end{align*} for some $y$. In particular, in Lemma \ref{6.1} we provide a general estimate without any restriction on $\bm{m}$. Afterwards in Lemma \ref{6.2}, we prove two more estimates where the summation is restricted to $m_{n+1}=0$ and those $\bmh{m}$ that are zeroes of $F_1$ or both $F_1$ and $F_2$.

\begin{lem}\label{6.1}
Let $l_3$ be cube-full. We have uniformly in $m_n$ and for all $(a,l_3)=1$
\begin{align*}
\sum_{\lVert \bm{\hat{m}}\rVert\leq y} |Q(a\bm{m},l_3)|\ll l_3^{\frac{1}{2}n+\frac{3}{2}+\epsilon}(y^{n}+l_3^\frac{n}{3}). 
\end{align*}
\end{lem}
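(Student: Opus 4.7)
The plan is to reduce to the prime-power case by the pseudo-multiplicativity \eqref{pseudomult} of $Q$, and then to treat each prime-power modulus $l_3 = p^\alpha$ with $\alpha \geq 3$ via a $p$-adic critical-point analysis, paralleling Hooley's Lemma 12 of \cite{h9}. For odd $p$ I would first apply \eqref{lnausgewertet} to separate out the quadratic Gauss sum in $l_{n+1}$, which contributes a factor $p^{\alpha/2}$ and leaves an exponential sum over $h$ (twisted by $\legendre{h}{p^\alpha}$) and over $\bmh{l}$. The standard Hua substitution $\bmh{l} = \bm{u} + p^{\lceil \alpha/2 \rceil}\bm{v}$ followed by Taylor expansion in $\bm{v}$ converts the $\bm{v}$-sum into the orthogonality constraint
\[
h \nabla C(\bm{u}) + a\bmh{m} \equiv 0 \pmod{p^{\lfloor \alpha/2 \rfloor}}.
\]
Since $C$ is nonsingular over $\mathbb{F}_p$ for all but finitely many $p$ (which can be absorbed into the implied constant), Hensel-lifting produces only $O(1)$ solutions $\bm{u}$ modulo $p^{\lceil \alpha/2 \rceil}$ in the transversal case, and a controlled number of solutions when the system degenerates. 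This yields a pointwise bound
\[
|Q(\bm{m}, p^\alpha)| \ll p^{\alpha(n/2 + 3/2) + \epsilon} D_p(\bmh{m}),
\]
where the divisor factor $D_p(\bmh{m})$ records how close $\bmh{m}$ lies to the critical-value locus of $C$ modulo $p^\alpha$. The case $p=2$ is handled analogously by direct completion of the square, which remains valid because $\alpha \geq 3$.

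Multiplying the prime-power bounds gives $|Q(\bm{m}, l_3)| \ll l_3^{n/2 + 3/2 + \epsilon} D(\bmh{m}, l_3)$, so it remains to estimate $\sum_{\|\bmh{m}\| \leq y} D(\bmh{m}, l_3)$. Stratifying by the dyadic value of $D$, the stratum where $D \asymp \Delta$ sits on a sublattice of $(\mathbb{Z}/l_3)^n$ of index $\asymp \Delta$, so the number of its representatives in the box $\|\bmh{m}\| \leq y$ is $\ll y^n/\Delta + 1$, contributing $\Delta \cdot (y^n/\Delta + 1) = y^n + \Delta$ to the sum. Summing dyadically over $\Delta \ll l_3^{n/3}$ — the range beyond which the trivial bound $|Q| \leq l_3^{n+1}$ forces the stratum to be empty — produces the claimed $y^n + l_3^{n/3}$.

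The principal technical difficulty lies in the $p$-adic bookkeeping: one must track the divisor factor $D_p$ precisely enough through the Hua substitution and the Hensel-lifting step that the summed contribution balances to exactly $y^n + l_3^{n/3}$ rather than a larger expression. This parallels Hooley's treatment of equation (95) of \cite{h9}; the only structural differences here are the presence of the Jacobi symbol $\legendre{h}{p^\alpha}$ and the separate Gauss sum in $l_{n+1}$, both of which enter only through their magnitude and do not affect the critical-point geometry, so Hooley's analysis adapts with only minor modification.
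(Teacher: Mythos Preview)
Your approach would likely work but is far more elaborate than what the paper does. The paper's proof is essentially two observations. First, apply the triangle inequality to the $h$-sum in $Q(a\bm{m},k)$; since $\bigl|\legendre{h}{k}\bigr|=1$, the Jacobi symbol disappears immediately. Second, bound the Gauss sum over $l_{n+1}$ by $\sqrt{2k}$, which removes both the extra variable and the dependence on $m_{n+1}$. After a change of variables to absorb $a$, what remains is exactly
\[
\sqrt{2k}\,{\sum_{h(k)}}^*\Bigl|\sum_{\bmh{x}(k)}e_k\bigl(hC(\bmh{x})+\bmh{m}.\bmh{x}\bigr)\Bigr|,
\]
which is precisely the object bounded by Heath-Brown's Lemma~14 of \cite{hb2} for square-full (hence cube-full) modulus. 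The paper simply cites that result.

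What you are proposing is, in effect, to reprove Heath-Brown's Lemma~14 from scratch via the stationary-phase route. The outline (Hua substitution, Hensel lifting, stratified counting) is the standard way such bounds are obtained, so the strategy is sound. But several steps in your sketch are underspecified: the stratum where $D\asymp\Delta$ is not literally a sublattice of index $\asymp\Delta$ but rather a union of residue classes whose cardinality has to be controlled, and the justification for the cutoff $\Delta\ll l_3^{n/3}$ from the trivial bound $|Q|\leq l_3^{n+1}$ is not immediate. You would also need the reduction to prime powers to respect the sum over $\bmh{m}$ in a box, which requires an additional combinatorial step.

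The insight you are missing is that the two ``new'' features here --- the quadratic character in $h$ and the separate quadratic variable $l_{n+1}$ --- both vanish under the triangle inequality in $h$ and the Gauss-sum bound, and Heath-Brown already takes the triangle inequality in $h$ as his first step. So no $p$-adic analysis needs to be redone; the lemma reduces in one line to an existing result.
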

\begin{proof}
The result in the homogeneous case is Lemma 14 of \cite{hb2}. The summation over $h$ is done trivially in Heath-Brown's proof, so our additional character disappears in the first step. We have for all $k$ and $(a,k)=1$

\begin{align*}
{\sum_{h(k)}}^*\Bigl|\sum_{\bm{x}(k)}e_k(h(C(\bmh{x})-x_n^2)+\bm{x}.(a\bm{m}))\Bigr|&\leq \sqrt{2k}{\sum_{h(k)}}^*\Bigl|\sum_{\bmh{x}(k)}e_k(hC(\bmh{x})+\bmh{x}.(a\bmh{m}))\Bigr|\\
&=\sqrt{2k}{\sum_{h(k)}}^*\Bigl|\sum_{\bmh{x}(k)}e_k(hC(\bmh{x})+\bmh{x}.\bmh{m})\Bigr|,
\end{align*}
the remaining sum is the same object that Heath-Brown deals with and his argument can be applied to prove the Lemma for squareful and so also for cube-full $k$.

\end{proof}

To compensate for the weaker cases of Lemma \ref{primecase} and \ref{primesquare}, we show the following result. It improves upon Lemma \ref{6.1} by using $m_{n+1}=0$ to do an additional nontrivial summation. Furthermore we employ \ref{Nlemma} to use the relative scarcity of points for which either $F_1(\bmh{m})=0$ or $F_1(\bmh{m})=0$ and additionally $F_2(\bmh{m})=0$. 

\begin{lem}\label{6.2}
Let $k_3l_4$ be cube-full with $k_3=\prod_{p^3||k_3l_4}p^3$, then 
\begin{align*}
\sum_{\substack{\lVert \bm{\hat{m}}\rVert\leq y\\ F_1(\bm{\hat{m}})=0}}|Q((\bmh{m},0),k_3l_4)|\ll  (k_3l_4)^\epsilon(k_3^{\frac{5}{6}n+\frac{4}{3}}l_4^{n+1}+y^{n-1}(k_3l_4)^{\frac{1}{2}n+\frac{5}{3}}).
\end{align*}
Furthermore
\begin{align*}
\sum_{\substack{\lVert \bm{\hat{m}}\rVert\leq y\\ \substack{F_1(\bm{\hat{m}})=0 \\F_2(\bmh{m})=0}}}|Q((\bmh{m},0),k_3l_4)|\ll (k_3l_4)^\epsilon(k_3^{\frac{5}{6}n+\frac{4}{3}}l_4^{n+1}+y^{n-2}(k_3 l_4)^{\frac{1}{2}n+2}).
\end{align*}
\end{lem}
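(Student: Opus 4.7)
My plan is to split the modulus multiplicatively into prime powers, derive pointwise bounds for $Q$ at each factor by lifting back to the prime case of Lemma~\ref{primecase}, and then assemble the sum via Lemma~\ref{Nlemma}.

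By \eqref{pseudomult} and \eqref{betragmult} one has $|Q((\bmh{m},0),k_3l_4)|=\prod_{p^a\|k_3l_4}|Q((\bmh{m},0),p^a)|$, so the task reduces to finding pointwise bounds on $|Q((\bmh{m},0),p^a)|$ for $a\ge 3$ that retain dependence on $F_1(\bmh{m})$ and $F_2(\bmh{m})$. For $p^3\|k_3$ a standard Taylor lift $\bm{l}=\bm{l}_0+p\bm{l}_1$ with $\bm{l}_0\pmod p$ and $\bm{l}_1\pmod{p^2}$, combined with the Gauss-sum evaluation \eqref{lnausgewertet}, reduces $Q((\bmh{m},0),p^3)$ to an exponential sum at $p$ supported on critical points of $hC+\bmh{m}.\cdot$, to which Lemma~\ref{primecase} applies; this yields a pointwise bound proportional to $(F_1(\bmh{m}),F_2(\bmh{m}),p)^{1/2}$, in direct analogy with Hooley's \cite[Lemma~11]{h9}. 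For $p^a\|l_4$ with $a\ge4$ a cruder but analogous lift gives $|Q((\bmh{m},0),p^a)|\ll p^{a(n+1)}$, which accounts for the $l_4^{n+1}$ appearing in the first term of both displayed inequalities.

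With these local estimates in hand I parametrize $\bmh{m}$ by its residue $\bmh{r}$ modulo $k_3l_4$, using that $|Q((\bmh{m},0),k)|$ depends only on $\bmh{m}\bmod k$:
\begin{align*}
\sum_{\substack{\|\bmh{m}\|\le y\\F_1(\bmh{m})=0}}|Q((\bmh{m},0),k_3l_4)|=\sum_{\bmh{r}(k_3l_4)}|Q((\bmh{r},0),k_3l_4)|\,N_1(y,\bmh{r},k_3l_4),
\end{align*}
and analogously with $N_2$ in place of $N_1$ for the second bound. Lemma~\ref{Nlemma} supplies $N_i(y,\bmh{r},k_3l_4)\ll(y/(k_3l_4)+1)^{n-i}$, and expanding the binomial splits each estimate into two regimes: a $y$-independent contribution coming from an $O(1)$ number of residue classes on the vanishing locus (yielding the first term $k_3^{5n/6+4/3}l_4^{n+1}$, controlled using Lemma~\ref{6.1}), and a $y^{n-i}$-contribution which, combined with the per-prime pointwise bounds of the previous step averaged over $\bmh{r}$ satisfying $F_1\equiv0\pmod{k_3l_4}$ (respectively $F_1\equiv F_2\equiv0$), produces the second terms $y^{n-1}(k_3l_4)^{n/2+5/3}$ and $y^{n-2}(k_3l_4)^{n/2+2}$.

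The principal technical obstacle is the prime-cube step: carrying out the Hensel reduction at $p^3$ while faithfully propagating the $(F_1,F_2,p)^{1/2}$-saving of Lemma~\ref{primecase}, so that after summation the exponent on $k_3l_4$ comes out as the sharper $n/2+5/3$ (respectively $n/2+2$ once one also uses $F_2(\bmh{m})=0$), rather than the $n/2+5/2$ one would obtain from a direct invocation of Lemma~\ref{6.1} without any pointwise improvement. Once this lifted local bound is available, the remainder is a bookkeeping exercise combining Lemma~\ref{Nlemma} with the standard range split $y\lesssim k_3l_4$ versus $y\gtrsim k_3l_4$.
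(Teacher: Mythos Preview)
Your plan has a genuine gap in two places, and both stem from replacing the paper's \emph{global} Taylor expansion by a prime-by-prime decomposition.

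First, the proposed reduction at $p^{3}$ does not lead back to Lemma~\ref{primecase}. Writing $\bm{l}=\bm{l}_{0}+p\bm{l}_{1}$ and summing out the higher layers leaves an exponential sum supported on the critical locus $\{\bmh{l}_{0}:\nabla C(\bmh{l}_{0})\equiv -\overline{h}\bmh{m}\pmod p\}$, a finite set of points; Lemma~\ref{primecase} bounds the \emph{full} sum $\sum_{\bmh{l}(p)}e_{p}(hC(\bmh{l})+\bmh{m}\cdot\bmh{l})$ via the sheaf-theoretic input of Katz and Fouvry--Katz, and says nothing about the restricted sum you obtain. So the claimed pointwise saving $(F_{1},F_{2},p)^{1/2}$ at level $p^{3}$ is unjustified.

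Second, even if one grants the pointwise bounds $|Q((\bmh{m},0),p^{3})|\ll p^{3(n/2+1)}(F_{1},F_{2},p)^{1/2}$ and $|Q((\bmh{m},0),p^{a})|\ll p^{a(n+1)}$ for $a\geq 4$, your assembly fails to recover the stated exponents. Sorting $\bmh{m}$ into residues modulo $k_{3}l_{4}$ and invoking Lemma~\ref{Nlemma} at that modulus produces, for the $y^{n-1}$-term, a factor $l_{4}^{n+1}$ coming from the pointwise bound on the $l_{4}$-part; the target is $l_{4}^{n/2+5/3}$, and for $n\geq 3$ one has $n+1>n/2+5/3$. Likewise the ``$O(1)$ residue classes'' remark is incorrect (there are $\asymp (k_{3}l_{4})^{n-1}$ residues on $F_{1}\equiv 0$), and Lemma~\ref{6.1} applied in the small-$y$ range gives a $k_{3}$-exponent $5n/6+3/2$, strictly larger than the required $5n/6+4/3$.

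What the paper actually does is a single Taylor expansion on the whole modulus $q=k_{3}l_{4}$, written as $q=q_{1}^{2}q_{2}$ with $q_{2}$ squarefree and $q_{2}\mid q_{1}$: one puts $\bm{l}=\bm{L}+q_{1}q_{2}\bm{g}$ and $h=t+uq_{1}$, obtaining the constraints $q_{1}\mid t\nabla f(\bm{L})+\bm{m}$ and $q_{1}q_{2}\mid f(\bm{L})$. The hypothesis $m_{n+1}=0$ then forces $q_{1}\mid L_{n+1}$, so the $L_{n+1}$-sum is a complete Gauss sum of size $O(q_{2}^{1/2})$; what remains is exactly the $n$-variable object treated in Heath-Brown's Section~7 of \cite{hb2}. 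That argument outputs $q^{\epsilon}q_{1}^{2n+2}q_{2}^{n/2+2}\max_{\bmh{r}}N_{i}(y,\bmh{r},q_{1})$, and the crucial point is that Lemma~\ref{Nlemma} is applied with modulus $q_{1}\asymp q^{1/2}$, not $q$; this halving of the effective modulus is precisely what yields the exponents $n/2+5/3$ and $5n/6+4/3$ after translating $q_{1}^{2}q_{2}$ back to $k_{3}l_{4}$.
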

\begin{proof}
The proof of both estimates is done similarly, only in the last step a different case of Lemma \ref{Nlemma} is applied.

We start as in the related case in Chapter $7$ of \cite{hb2}. For the sake of analogy in the notation, instead of looking at a cube-full $k$ we use a square-full $q$ that we write as $q=q_1^2q_2 $ with a squarefree $q_2$, so that $q_2|q_1$. We recall \eqref{Qdef} 
\begin{align*}
Q(\bm{m},q)=\sum_{h(q)}^*\sum_{\bm{l}(q)}e_q(hf(\bm{l})+\bmh{m}.\bmh{l}).
\end{align*}
Since it is eases the notation we start by treating $Q(\bm{m},0)$ for arbitrary $\bm{m}$ before using $m_{n+1}=0$. We transform the sum over $\bm{l}$ by writing $\bm{l}=\bm{L}+q_1q_2\bm{g}$ and applying Taylor expansion
\begin{align*}
\sum_{\bm{l}(q)}e_q(hf(\bm{l})+\bm{m}.\bm{l})&=\sum_{\bm{L}(q_1q_2)}e_q(hf(\bm{L})+\bm{m}.\bm{L})\sum_{\bm{g}(q_1)}e_{q_1}(\bm{g}.(h\nabla f(\bm{L})+\bm{m}))\\
&=q_1^{n+1}\sum_{\substack{\bm{L}(q_1q_2)\\q_1|s\nabla f(\bm{L})+\bm{m}}}e_q(hf(\bm{L})+\bm{m}.\bm{L}).
\end{align*}
We sum now over $h$ and to do so write $h=t+uq_1$. Considering that $(h,q)=1$ if and only if $(t,q_1)=1$, we get

\begin{align}
Q(\bm{m},q)&=q_1^{n+1}{\sum_{t(q_1)}}^*\sum_{\substack{\bm{L}(q_1q_2)\\q_1|t\nabla f(\bm{L})+\bm{m}}}e_q(tf(\bm{L})+\bm{m}.\bm{L})\sum_{u(q_1q_2)}e_{q_1q_2}(uf(\bm{L}))\nonumber \\
&=q_1^{n+2}q_2{\sum_{t(q_1)}}^*\sum_{\substack{\bm{L}(q_1q_2)\\ \substack{q_1|t\nabla f(\bm{L})+\bm{m}\\q_1q_2|f(\bm{L})}}}e_q(tf(\bm{L})+\bm{m}.\bm{L}). \label{Lnsum}
\end{align}
We now fix $m_{n+1}=0$ and recall that we have $f(\bm{L})=C(\bmh{L})-L_{n+1}^2$. The first summation condition of the inner sum becomes
\begin{align*}
&q_1|t\nabla C(\bmh{L})+\bmh{m} \\
&q_1|2tL_{n+1}.
\end{align*} 
For $(q_1,2)=1$ this means $q_1|L_{n+1}$ and as $q_2|q_1$ we have $q_1q_2|L_{n+1}^2$. Using this, the second condition is
\begin{align*}
q_1q_2|C(\bmh{L}).
\end{align*}
The sum over $L_{n+1}$ in \eqref{Lnsum} is consequently of the form
\begin{align*}
\sum_{L_n(q_1q_2), q_1|L_n}e_q(tL_n^2)=\sum_{L_n(q_2)}e_q(tq_1^2L_n^2)=\sum_{L_n(q_2)}e_{q_2}(tL_n^2)\ll q_2^{\frac{1}{2}}.
\end{align*}
Let now $2^\alpha||q_1$ for some $\alpha\geq 1$. We then have 
\begin{align*}
\sum_{L_n(q_1q_2), q_1|2 L_n}e_q(tL_n^2)&=\sum_{L_n(2q_2)}e_q(t2^{2\alpha-2}q_1^2L_n^2)\\&=\sum_{L_n(2 q_2)}e_{4 q_2}(tL_n^2)\\&=\frac{1}{2}\sum_{L_n(4 q_2)}e_{4 q_2}(tL_n^2)\\&\ll q_2^{\frac{1}{2}}.
\end{align*}
If now $\alpha\geq 3$, we still have the implication $q_1q_2|L_n^2$, because $q_2$ is squarefree. In that case the same argument as for $(q_1,2)=1$ can be applied. In the other cases we have $2^\gamma|| q$ for some $\gamma\leq 5$. Those finitely many cases can be dealt with via the multiplicativity of $|Q((\bm{m},0),q)|$, only possibly increasing the implied constants.

We arrive at
\begin{align*}
Q((\bmh{m},0),q)&\ll q_1^{n+2}q_2^{\frac{3}{2}}{\sum_{t(q_1)}}^*\bigl|\sum_{\substack{\bmh{L}(q_1q_2)\\ \substack{q_1|t\nabla C(\bmh{L})+\bmh{m}\\q_1q_2|C(\bmh{L})}}}e_q(tC(\bmh{L})+\bmh{m}.\bmh{L})\bigr|.
\end{align*}
This double sum is the same object as Heath-Brown considers in section 7 of \cite{hb2}. For $n\geq 5$ we can apply his argument to get
\begin{align*}
\sum_{\substack{||\bmh{m}||\leq y\\F_1(\bmh{m})=0}}|Q((\bmh{m},0),q)|\ll q^\epsilon q_1^{2n+2}q_2^{\frac{1}{2}n+2}\max_{\bmh{r}(q_1)}N_1(y,\bmh{r},q_1),
\end{align*}
and
\begin{align*}
\sum_{\substack{||\bmh{m}||\leq y\\\substack{F_1(\bmh{m})=0\\ F_2(\bmh{m})=0}}}|Q((\bmh{m},0),q)|\ll q^\epsilon q_1^{2n+2}q_2^{\frac{1}{2}n+2}\max_{\bmh{r}(q_1)}N_2(y,\bmh{r},q_1),
\end{align*}
where $N_1$ and $N_2$ are es defined in \eqref{N1def} and \eqref{N2def}. We apply the first statement of Lemma \ref{Nlemma} and change the notation from $q=q_1^2q_2$ to $q=k_3l_4$ to get the estimate
\begin{align*}
\sum_{\substack{||\bmh{m}||\leq y\\F_1(\bmh{m})=0}}|Q((\bmh{m},0),q)|&\ll q^\epsilon q_1^{2n+2}q_2^{\frac{1}{2}n+2}\bigl(y/q_2+1 \bigr)^{n-1}\\
&\ll q^\epsilon\bigl( y^{n-1} q_1^{n+3} q_2^{\frac{1}{2}n+2}+q_1^{2n+2}q_2^{\frac{1}{2}n+2}\bigr)\\
&\ll (k_3l_4)^\epsilon(k_3^{\frac{5}{6}n+\frac{4}{3}}l_4^{n+1}+y^{n-1}(k_3l_4)^{\frac{1}{2}n+\frac{5}{3}}).
\end{align*}
Similarly, now applying the second statement of Lemma \ref{Nlemma} we deduce
\begin{align*}
\sum_{\substack{||\bmh{m}||\leq y\\\substack{F_1(\bmh{m})=0\\ F_2(\bmh{m})=0}}}|Q((\bmh{m},0),q)|&\ll q^{\epsilon}q_1^{2n+2}q_2^{\frac{1}{2}n+2}\bigl(y/q_1+1 \bigr)^{n-2}\\
&\ll q^\epsilon \bigl(y^{n-2}q_1^{n+4}q_2^{\frac{1}{2}n+2}+q_1^{2n+2}q_2^{\frac{1}{2}n+2} \bigr)\\
&\ll (k_3l_4)^\epsilon(k_3^{\frac{5}{6}n+\frac{4}{3}}l_4^{n+1}+y^{n-2}(k_3 l_4)^{\frac{1}{2}n+2}).
\end{align*}
\end{proof}
\subsection{Another Type of Average}
\label{3sec3.3}
As in Hooley's work on cubic forms we require exponential sum estimates of a type that does not appear in Heath-Brown's paper. We now consider 
\begin{align*}
D(k,b_{n+1})&:=\sum_{\bmh{b}(k)}|Q(\bm{b},k)|\\
E(k,r)&:={\sum_{c(k)}}^*D(k,cr).
\end{align*}
To put the strength of the next Lemma into context, we note that by Lemma \ref{primecase} and an estimate of the number bad $\bmh{m}$ we have the bounds
\begin{align*}
D(p,0)&\ll p^{\frac{3}{2}n+1} \\
E(p,r)&\ll p^{\frac{3}{2}n+2}.
\end{align*}
We now show that similar estimates hold for prime powers and that in the prime case the implicit constant can be chosen as $1$.
 
\begin{lem}\label{DundE}
$D(k,0)$ and $E(k)$ are multiplicative and we have
\begin{align*}
D(p^{\alpha},0)&\ll \sqrt{\alpha} p^{(\frac{3}{2}n+1)\alpha} \\
D(p,0)&\leq p^{\frac{3}{2}n+1}\\
E(p^{\alpha},r)&\ll \sqrt{\alpha} p^{(\frac{3}{2}n+2)\alpha}\\
E(p,r)&\leq  p^{\frac{3}{2}n+2}+O(p^{\frac{3}{2}n+\frac{3}{2}})
\end{align*}
\end{lem}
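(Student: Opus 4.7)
My plan is three-stage: first prove multiplicativity, then handle the prime case by a direct second-moment computation combined with Cauchy's inequality, and finally adapt this to prime powers via Hensel lifting.

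Multiplicativity follows from taking absolute values in \eqref{pseudomult} and applying CRT. For $\gcd(k,k')=1$, writing $\bmh{b}\bmod kk'$ via CRT as a pair $(\bmh{b}_1,\bmh{b}_2)$ and noting that $\bmh{b}_i\mapsto\overline{k'}\bmh{b}_i$ (respectively $\overline{k}\bmh{b}_i$) is a bijection on $\mathbb{Z}/k$ (respectively $\mathbb{Z}/k'$), the sum $\sum_{\bmh{b}(kk')}$ factors and gives $D(kk',0)=D(k,0)D(k',0)$. A parallel argument for $E$ uses that $c$ coprime to $kk'$ splits via CRT into $(c_1,c_2)$, and that $\overline{k'}c_1 r$ runs through units times $r\bmod k$ as $c_1$ does, so $E(kk',r)=E(k,r)E(k',r)$.

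The core computation is the prime case. For $D(p,0)$, Cauchy's inequality gives $D(p,0)^2\leq p^n\sum_{\bmh{b}(p)}|Q((\bmh{b},0),p)|^2$. Expanding $|Q|^2$ introduces two copies of $h$ and $\bm{l}$; orthogonality over $\bmh{b}$ forces $\bmh{l}_1\equiv\bmh{l}_2\pmod p$, contributing a factor $p^n$, and the two $(n{+}1)$-th coordinate variables separate into Gauss sums of modulus $\sqrt{p}$. The remaining cubic sum $\sum_{\bmh{l}(p)}e_p((h_1-h_2)C(\bmh{l}))$ equals $p^n$ when $h_1=h_2$ and, since $C$ is nonsingular, is $O(p^{n/2})$ otherwise by Deligne. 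The diagonal $h_1=h_2$ contributes $(p-1)p^{2n+1}$ and the off-diagonal is lower order for $n\geq 6$, so the second moment is $\leq p^{2n+2}$, giving $D(p,0)\leq p^{3n/2+1}$. For $E(p,r)$: if $p\mid r$ then $E(p,r)=(p-1)D(p,0)\leq p^{3n/2+2}$; if $p\nmid r$ then $cr$ runs through the nonzero residues, so $E(p,r)\leq \sum_{\bm{b}(p)}|Q(\bm{b},p)|$. Cauchy's inequality over the full $\mathbb{F}_p^{n+1}$ together with an analogous second moment (orthogonality now forces $\bm{l}_1\equiv\bm{l}_2\pmod p$) gives $\sum|Q|^2\leq p^{2n+3}$ plus a lower-order error, hence $E(p,r)\leq p^{3n/2+2}+O(p^{3n/2+3/2})$.

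The prime-power cases follow the same template, but with Hensel lifting replacing the direct Deligne bound. After Cauchy's inequality and orthogonality, the inner cubic sum $\sum_{\bmh{l}(p^\alpha)}e_{p^\alpha}((h_1-h_2)C(\bmh{l}))$ is split by $j=v_p(h_1-h_2)$: for each $j\in\{0,1,\dots,\alpha\}$, Hensel's lemma (using that $C$ is nonsingular mod $p$) reduces the sum to one modulo $p^{\alpha-j}$ times a factor $p^{jn}$ from lifting, while the Gauss sums over $l_{1,n+1},l_{2,n+1}$ at level $p^\alpha$ contribute magnitude $p^{\alpha/2}$ each. Summing over $j$ introduces an additional factor $\alpha$, which after the square root in Cauchy's inequality produces the $\sqrt{\alpha}$ in the stated bound. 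The main obstacle is the careful bookkeeping required to match the Hensel-lift bounds for the cubic sum with the Gauss sum bounds so that the exponents come out to exactly $(\frac{3}{2}n+1)\alpha$ and $(\frac{3}{2}n+2)\alpha$, and to verify that the $j=\alpha$ contribution (where $h_1\equiv h_2\pmod{p^\alpha}$) does not dominate. This is essentially the approach of Hooley's Lemma 12 in \cite{h9}, extended here by a parallel treatment of the quadratic variable $l_{n+1}$ through Gauss sums.
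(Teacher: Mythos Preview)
Your overall strategy---Cauchy--Schwarz followed by a second-moment expansion---matches the paper's, but your second-moment computation takes a different route and leaves a genuine gap in the prime-power step.

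In the prime case you invoke Deligne for the off-diagonal cubic sum, obtaining $D_2(p,0)\leq (p-1)p^{2n+1}+O(p^{3n/2+3})$. This is $\leq p^{2n+2}$ only once $p$ exceeds a threshold depending on $C$, so the clean inequality $D(p,0)\leq p^{3n/2+1}$ (with implied constant~$1$, as stated) does not follow for all primes from your argument. The paper instead first evaluates the $l_{n+1}$-Gauss sum to write $|Q((\bmh{b},0),p)|=p\bigl|\sum_{\bmh{l}}\bigl(\tfrac{C(\bmh{l})}{p}\bigr)e_p(\bmh{b}\cdot\bmh{l})\bigr|$; orthogonality over $\bmh{b}$ then gives $D_2(p,0)=p^{n+2}\sum_{\bmh{l}}\bigl(\tfrac{C(\bmh{l})}{p}\bigr)^2\leq p^{2n+2}$ exactly, with no appeal to Deligne at all.

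In the prime-power case your reduction leads to the pure cubic sum $S(a',p^\beta)=\sum_{\bmh{l}(p^\beta)}e_{p^\beta}(a'C(\bmh{l}))$ with $p\nmid a'$, and for your exponent bookkeeping to close you need $|S(a',p^\beta)|\ll p^{(n-1)\beta}$. This is not ``Hensel's lemma''; it is a stationary-phase estimate obtained by writing $\bmh{l}=\bmh{L}+p^{\lceil\beta/2\rceil}\bm{g}$, Taylor-expanding, and using nonsingularity of $C\bmod p$ to control the locus where $p^{\lfloor\beta/2\rfloor}\mid\nabla C(\bmh{L})$. Such a bound does hold for $n\geq 5$, so your approach is salvageable, but the required estimate is neither stated nor proved in your sketch. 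The paper avoids this entirely: it expands $D_2(p^\alpha,b_{n+1})$ in Ramanujan sums $c_{p^\alpha}(f(\bm{l}))$, reducing matters to the elementary square-root count $R_1^\alpha(\bmh{l})=\#\{l_{n+1}\bmod p^\alpha: l_{n+1}^2\equiv C(\bmh{l})\}\leq 2p^{k/2}$ when $p^k\|C(\bmh{l})$, and then sums over $\bmh{l}$ stratified by $k$. This Ramanujan-sum route is precisely the method of Hooley's \cite[\S11]{h9}, so your attribution of the Gauss-sum separation approach to Hooley is inaccurate.
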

In this result the factor $\sqrt{\alpha}$ should be removable, but it simplifies the proof without preventing the intended application. 
\begin{proof}The multiplicativity follows directly from the pseudo-multiplicativity of $Q(\bm{b},k)$, see \eqref{pseudomult}. As in \cite{h9} the to estimate $D$ and $E$, we consider a second moment and then employ using the Cauchy-Schwarz inequality. We define
\begin{align*}
D_2(k,b_{n+1})&:=\sum_{\bmh{b}(k)}|Q(\bm{b},k)|^2\\
E_2(k,r)&:={\sum_{c(k)}}^*D_2(k,cr).
\end{align*}

We start the proof of the estimates by showing
\begin{align}\label{D_1b_n}
D(p^\alpha,b_{n+1})\ll \sqrt{\alpha} p^{\frac{1}{2}(3n+2)\alpha}
\end{align}
for arbitrary $b_{n+1}$ and $\alpha\geq 2$. This estimate is closely related to the similar result in section 11 of \cite{h9}. We follow the proof there and have 
\begin{align*}
D_2(p^\alpha,b_{n+1})=p^{n\alpha}\sum_{\substack{\substack{\bmh{l} (p^\alpha)\\ l_{n+1}(p^\alpha) } \\l_{n+1}' (p^\alpha)}}c_{p^\alpha}(C(\bmh{l})-l_{n+1}^2)c_{p^\alpha}(C(\bmh{l})-l_{n+1}'^2)e_{p^\alpha}(b_{n+1}(l_{n+1}-l_{n+1}'))
\end{align*}
with 
\begin{align*}
c_{p^\alpha}(d)=\begin{cases}p^{\alpha-1}(p-1) &\text{ if } p^\alpha|d, \\ -p^{\alpha-1} &\text{ if } p^{\alpha-1}||d,
\\ 0& \text{ otherwise.}
\end{cases}
\end{align*}
By setting
\begin{align*}
&R_1^\alpha(\bmh{l})=\#\{l_{n+1}(p^\alpha) : C(\bmh{l})-l_{n+1}^2\equiv 0 (p^\alpha)\},\\
&R_2^\alpha(\bmh{l})=\#\{l_{n+1}(p^\alpha) : C(\bmh{l})-l_{n+1}^2\equiv 0 (p^{\alpha-1}), C(\bmh{l})-l_n^2\not\equiv 0 (p^\alpha)\},
\end{align*}
we can can apply the triangle inequality and collect the terms to get
\begin{align*}
D_2(p^\alpha,b_{n+1})\leq p^{n\alpha}\sum_{\bmh{l}(p^\alpha)}[R_1^\alpha(\bmh{l})p^{\alpha-1}(p-1)+R_2^\alpha(\bmh{l})p^{\alpha-1}]^2.
\end{align*}
If $p^k||C(\bmh l)$, then
\begin{align*}
R_1^\alpha(\bmh l)\leq  2 p^{\frac{1}{2}k}
\end{align*}
and furthermore
\begin{align*}
R_2^\alpha(\bmh l)\leq p R_1^{\alpha-1}(\bmh l)\leq 2 p^{\frac{1}{2}k+1}.
\end{align*}
Therefore we get
\begin{align*}
D_2(p^\alpha,b_{n+1})&\ll p^{n\alpha}\sum_{k=0}^\alpha \sum_{\bmh l (p^\alpha): p^k||C(\bmh l)}p^{k+2\alpha}\\
&\ll p^{n\alpha}\sum_{k=0}^\alpha p^{k+2\alpha+k(n-1)+n(\alpha-k)}\\
&\ll \alpha p^{(2n+2)\alpha}.
\end{align*}
Applying the Cauchy-Schwarz inequality gives us 
\begin{align*}
D(p^\alpha,b_{n+1})\leq \Bigl(D_2(p^\alpha,b_{n+1})\sum_{\bmh{a}(p^\alpha)}1 \Bigr)^{\frac{1}{2}}\ll \sqrt{\alpha} p^{\frac{1}{2}(3n+1)\alpha}.
\end{align*}
This proves \eqref{D_1b_n}. The first estimate of the Lemma follows as the special case $b_{n+1}=0$. Further, the third estimate is deduced by summing \eqref{D_1b_n} over a suitable set of $b_{n+1}$

For the prime case, i.e. $\alpha=1$, we note thate for $p\neq 2$
\begin{align*} 
|Q((\bmh{m},0),p)|=p\Bigl|\sum_{\bmh{l}(p)}\legendre{C(\bmh{l})}{p}e_p(\bmh{m}.\bmh{l})\Bigr|.
\end{align*} 
We get

\begin{align*}
D_2(p,0)&=p^2\sum_{\bmh m (p)} \sum_{\bmh l, \bmh l' (p)} \legendre{C(\bmh l)}{p}\legendre{C(\bmh l')}{p}e_p(\bmh m(\bmh l - \bmh l'))\\
&= p^{(n+2)} \sum_{\bmh l} \legendre{C(\bmh l)}{p}^2\\
&\leq  p^{2n+2}.
\end{align*}
The result for $p=2$ is trivial and once more applying Cauchy-Schwarz shows
\begin{align*}
D(p,0)\leq p^{\frac{1}{2}(3n+2)},
\end{align*}
which was the second proposed estimate.

We now look at $E_2(p,r)$. If $p|r$ then the just proved result on $D_2(p,0)$ can be applied, in the other case we have for odd primes
\begin{align*}
E_2(p,r)&=p^{n+1}\sum_{\bm l (p)}c_p(C(\bmh l)-l_{n+1}^2)^2\\
&\leq p^{n+1}(\sum_{\bmh l(p)}\Bigl[\legendre{C(\bmh l)}{p}+1\Bigr](p-1)^2+p^{n+1})\\
&\leq p^{2n+3}+O(p^{2n+2}).
\end{align*}
The case $p=2$ again making no difficulty and one final application of Cauchy-Schwarz gives us 
\begin{align*}
E(p,r)\leq p^{\frac{1}{2}(3n+4)}+O(p^{\frac{1}{2}(3n+3)}),
\end{align*}
which completes the proof of Lemma \ref{DundE}.

\end{proof}

As in Hooley's treatment of the homogeneous case in \cite{h9} and \cite{h93}, we require slight improvements of Lemma \ref{DundE} in the prime case to show Theorem \ref{Thm1.1}. They were provided in \cite{h9} by the use of Katz' paper \cite{katz}.  We start with the easier case of $E(p,1)$, that follows directly from the Katz' work.
\begin{lem}\label{KfurE}
There is a set of primes $\mathcal{P}_E$ having positive Dirichlet-density and $C_1<1$ such that for all $w\in \mathcal{P}_E$ we have
\begin{align*}
E(w,1)< C_1 w^{\frac{1}{2}(3n+4)}.
\end{align*}
\end{lem}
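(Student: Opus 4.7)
Plan: The strategy is to reduce Lemma \ref{KfurE} to a direct application of Katz's equidistribution theorem \cite{katz}, following Hooley's treatment of the homogeneous cubic case in \cite{h9}; the algebro-geometric input (the A-number being $\geq 2$) is the content of subsection \ref{3secapp}. First, using the Gauss-sum evaluation \eqref{lnausgewertet} for $c = b_{n+1}$ coprime to $p$, one has $|Q((\hat{\bm b},c),p)| = \sqrt p \,|G(\hat{\bm b},c)|$ with
$$
G(\hat{\bm b},c) = \sum_{h(p)}^{*}\Bigl(\tfrac{h}{p}\Bigr) e_p(\overline{4h}c^{2}) \sum_{\hat{\bm l}(p)} e_p\bigl(hC(\hat{\bm l}) + \hat{\bm b}\cdot \hat{\bm l}\bigr),
$$
so $E(p,1) = \sqrt p \sum_{\hat{\bm b},\, c\neq 0}|G(\hat{\bm b},c)|$. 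The normalised trace $T(\hat{\bm b},c) := |G(\hat{\bm b},c)|\,p^{-(n+1)/2}$ is bounded by Lemma \ref{primecase}, and a second-moment calculation --- the same one underlying the $E_2$ bound of Lemma \ref{DundE} --- shows that the average of $T^2$ over $(\hat{\bm b},c)$ is $1+o(1)$. Cauchy--Schwarz at this stage only recovers the trivial estimate $E(p,1) \leq (1+o(1))p^{(3n+4)/2}$, so the required constant-factor saving must come from the strictness of Cauchy--Schwarz, i.e.\ from the fact that $T$ is asymptotically non-constant.

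To access that strictness I would interpret $G$ as (the modulus of) a Frobenius trace on an $\ell$-adic perverse sheaf $\mathcal F$ on $\mathbb A^{n+1}_{\mathbb F_p}$, constructed in analogy with the sheaf appearing in the proof of Lemma \ref{primecase} and shown pure of the expected weight by Lemmas \ref{lisse-pure} and \ref{middle-extension} of the appendix. Katz's main theorem in \cite{katz} then asserts that, provided the A-number of $\mathcal F$ exceeds $1$, the Frobenius conjugacy classes equidistribute --- as $p$ ranges over a set of primes of positive Dirichlet density --- in a non-trivial compact subgroup $K$ of some unitary group. Consequently the $L^1$ average of $T$ converges to $\rho = \int_K |\mathrm{tr}(k)|\,dk$, which is strictly less than $1 = \sqrt{\int_K|\mathrm{tr}(k)|^{2}dk}$ by strict Jensen's inequality, precisely because $|\mathrm{tr}|$ is not constant on $K$. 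Choosing any $C_1 \in (\rho,1)$ and combining with the $\sqrt p$ prefactor and the parameter count $p^n(p-1)$ then yields $E(w,1) < C_1\, w^{(3n+4)/2}$ for all $w$ in that density-positive set, with the finitely many small primes absorbed harmlessly into $\mathcal P_E$.

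The only non-routine ingredient is the verification that $\mathcal F$ has A-number at least $2$. This amounts to an Euler-characteristic computation for the perverse sheaf, and it is precisely the content of subsection \ref{3secapp}; modulo that computation, the argument above is essentially bookkeeping, which is why Hooley's analogue in \cite{h9} also reduces to a one-line invocation of Katz.
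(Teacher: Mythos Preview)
Your plan takes a harder route than the paper, and the A-number input you invoke is not the one you need.

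The paper does not evaluate the Gauss sum at all. It instead uses that for $p\nmid\bm b$ one has
\[
Q(\bm b,p)=p\sum_{\substack{\bm l(p)\\ f(\bm l)\equiv 0(p)}}e_p(\bm b\cdot\bm l),
\]
so that $E(p,1)+D(p,0)=\sum_{\bm b(p)}|Q(\bm b,p)|$ is, up to the factor $p$ and the harmless $\bm b=\bm 0$ contribution, exactly the $L^1$-norm of the Fourier transform of the constant sheaf on the hypersurface $\{f=0\}\subset\mathbb A^{n+1}$. This places us squarely in Katz's original setting in \cite{katz}. Because $f(\bm l)=C(\hat{\bm l})-l_{n+1}^2$ is a sum of homogeneous polynomials in disjoint sets of variables with the leading one of degree $\geq 3$, Katz's own criterion already gives $A\geq 2$; subsection \ref{3secapp} is not used here at all. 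Subtracting the bound $D(p,0)\leq p^{\frac32 n+1}$ from Lemma \ref{DundE} finishes the proof in a few lines.

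Your approach has two concrete problems. First, subsection \ref{3secapp} computes the A-number of $\mathcal L_\chi(C)$ on $\mathbb A^n$; that feeds into Lemma \ref{KfurD} (the bound for $D(w,0)$), not into the present lemma. The sheaf underlying your $G(\hat{\bm b},c)$ is a different object on a different space, and its A-number is nowhere computed in the paper. Second, and more structurally, after the Gauss-sum step your $G(\hat{\bm b},c)$ depends on $c$ only through $c^2$, so it is \emph{not} the trace of a Fourier transform in the variables $(\hat{\bm b},c)$. Katz's $L^1$ theorem in \cite{katz} is stated for Fourier transforms of perverse sheaves; fitting your $G$ into that framework would require either re-expressing the $c$-sum or building a new sheaf and redoing the purity, irreducibility, and A-number verifications from scratch. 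That is genuine work, not bookkeeping, and it is entirely avoided by the paper's observation that the full $(n{+}1)$-variable sum $Q(\bm b,p)$ is already a Fourier coefficient of a hypersurface.
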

\begin{proof}
We note that for $p\nmid \bm{b}$ we have
\begin{align*}
Q(\bm{b},p)=p\sum_{\substack{\bm{l}(p)\\ f(\bm{l})\equiv 0 (p)}}e_p(\bm{b}.\bm{l}).
\end{align*}
Thus 
\begin{align*}
E(p,1)=p\sum_{\bm{b}(p)}\bigl|\sum_{\substack{\bm{l}(p)\\ f(\bm{l})\equiv 0 (p)}}e_p(\bm{b}.\bm{l}) \bigr|-D(p,0).
\end{align*}
The sum is of the type considered in \cite{katz}. As $f(\bm{l})=C(\bmh{l})-l_{n+1}^2$ is a polynomial that can be written as a sum of homogeneous polynomials in disjoint variables and the leading one being of at least degree $3$, we have $A\geq 2$. Here $A$ is the $A$-number of \cite{katz}. For this case Katz proves the existence of a suitable set of primes $\mathcal{P}_E$ and $C_1'<1$, such that for all $w\in \mathcal{P}_E$
\begin{align*}
\sum_{\bm{b}(w)}\bigl|\sum_{\substack{\bm{l}(w)\\ f(\bm{l})\equiv 0 (w)}}e_w(\bm{b}.\bm{l}) \bigr|\leq C_1' w^{\frac{3}{2}n+1}.
\end{align*}
The Lemma follows by taking $C_1'<C_1<1$ and using the bound $D(p,0)\leq p^{\frac{3}{2}n+1}$.
\end{proof}

The following similar estimate of $D(p,0)$ requires more delicate application of Katz' ideas. Its proof uses some results about sheaves stated in subsection \ref{3secapp}.

\begin{lem}\label{KfurD}
There is a set of primes $\mathcal{P}_D$ having positive Dirichlet-density and $C_2<1$ such that for all $w\in \mathcal{P}_D$ we have
\begin{align*}
D(w,0)< C_2 w^{\frac{1}{2}(3n+2)}.
\end{align*}
\end{lem}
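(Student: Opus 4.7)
The plan is to rewrite $D(w,0)$ as a sum that falls within the framework of Katz \cite{katz} and then invoke the $A$-number computation from subsection \ref{3secapp} to extract the constant-factor improvement over the Cauchy--Schwarz bound already used in the proof of Lemma \ref{DundE}.

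First, for odd primes $w$ I would reuse the Gauss-sum evaluation \eqref{lnausgewertet} and then sum trivially over $h$ (exactly as in the $m_{n+1}=0$ branch of the proof of Lemma \ref{primecase}) to obtain
\begin{align*}
|Q((\bmh{b},0),w)| \;=\; w\left|\sum_{\bmh{l}(w)}\legendre{C(\bmh{l})}{w}e_w(\bmh{b}.\bmh{l})\right| \;=:\; w\,|T(\bmh{b})|,
\end{align*}
so that $D(w,0) = w\sum_{\bmh{b}(w)}|T(\bmh{b})|$. Cauchy--Schwarz combined with Parseval gives $\sum_{\bmh{b}(w)}|T(\bmh{b})|^2 \leq w^{2n}$ and hence $\sum_{\bmh{b}(w)}|T(\bmh{b})| \leq w^{\frac{3n}{2}}$, recovering the bound $D(w,0) \leq w^{\frac{3n}{2}+1}$ of Lemma \ref{DundE}. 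The aim is to beat this by a constant factor strictly less than $1$ for a positive-density set of primes.

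Second, I would identify $T(\bmh{b})$ as (the trace function of) the Fourier transform of the perverse sheaf $K = j_!\mathcal L_\chi(C)[n]$ attached to the quadratic character $\chi$, which is the same sheaf whose basic geometric properties are established in Lemmas \ref{lisse-pure} and \ref{middle-extension} of subsection \ref{3secapp}. Katz's method in \cite{katz} then yields a bound of the form $\sum_{\bmh{b}(w)}|T(\bmh{b})| \leq C_2' w^{\frac{3n}{2}}$ with some $C_2' < 1$, valid for all $w$ in a set of positive Dirichlet density, provided the $A$-number of the sheaf (read off from the Euler characteristic of the appropriate perverse sheaf) is at least $2$. Heuristically, an $A$-number exceeding $1$ prevents the normalized absolute values $|T(\bmh{b})|/w^{n/2}$ from concentrating at the extremal Cauchy--Schwarz value. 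The required hypothesis is supplied by Lemma \ref{a-number} of subsection \ref{3secapp}, which computes the $A$-number to be $2^{n-1}$, well above $1$ for our range $n \geq 6$.

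Putting the pieces together gives $D(w,0) \leq C_2' w^{\frac{3n}{2}+1} + O(w^{n+2})$, where the error absorbs the even-prime case and the $\bmh{b}=0$ contribution $|Q(\bm{0},w)| \ll w^{n+2}$; for $n\geq 6$ this error is dominated by the main term, so any $C_2 \in (C_2', 1)$ works. The main obstacle is the underlying geometric input, namely verifying that the $A$-number of $K$ exceeds $1$, but this is exactly the content of subsection \ref{3secapp}; modulo that computation, the argument here is a direct application of Katz's machinery to the particular sheaf at hand.
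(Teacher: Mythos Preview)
Your proposal is correct and follows essentially the same route as the paper: both rewrite $D(w,0)=w\sum_{\bmh{b}(w)}\bigl|\sum_{\bmh{l}(w)}\legendre{C(\bmh{l})}{w}e_w(\bmh{b}.\bmh{l})\bigr|$ and then invoke Katz's result \cite[Theorem 4.10]{katz} for the sheaf $\mathcal L_\chi(C)$, the key input being the $A$-number computation $A=2^{n-1}$ of Lemma \ref{a-number}. The paper is somewhat more explicit---it pins down the constant $C_2=1-\tfrac{1}{4(1+A^2)}$ and checks each of Katz's hypotheses (4.0.1)--(4.0.3) against Lemmas \ref{lisse-pure} and \ref{middle-extension}---whereas your final error term $O(w^{n+2})$ is harmless but unnecessary, since the $\bmh{b}=\bm{0}$ term is already subsumed in Katz's sum and the prime $w=2$ can simply be excluded from $\mathcal P_D$.
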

\begin{proof} Let $A = 2^{n-1}$ and let $C_2  =1-\frac{1}{ 4 (1+A^2)}= 1 - \frac{1}{4(1 + 2^{ 2(n-1)})}$. Let $p=w$. We have $D(w,0)= \sum_{\bmh{b}(w)}|Q(\bm{b},w)|= \sum_{\bmh{m} \in \mathbb F_p^{n-1}} p \Bigl|\sum_{\bmh{l}(p)}\legendre{C(\bmh{l})}{p}e_p(\bmh{m}.\bmh{l})\Bigr| $ so the stated bound is equivalent to

\[   \sum_{\bmh{m} \in \mathbb F_p^{n-1}} \left| \sum_{\bmh{l} \in \mathbb F_p^{n}} \left( \frac{ C(\bmh{l} ) }{ p} \right) e_p ( \bmh{m} \cdot \bmh{l}) \right| <  \left(1 - \frac{1}{4(1+A^2)}\right) p^{\frac{3}{2}n} .\]

That this bound holds for a positive proportion of $p$ will be a special case of \cite[Theorem 4.10]{katz}. We will first explain Katz's notation, and how we specialize it, as well as his hypotheses, and why they hold in our case.

Katz works with a ring $R$, an integer $r\geq 1$, a rank $r$ vector bundle $E$ on $\operatorname{Spec} R$, a closed subscheme $X$ of $E$, $i: X\to E$ the inclusion, an open set $V$ of $X$,  $j: V \to X$ the inclusion, and an integer $n$.  As below in \S3.4, we will take $R = \mathbb Z[1/2]$, $r=n$, $E$ the trivial vector bundle of rank $n$, $X=E$, $V$ the subset where $C \neq 0$, and Katz's $n$ is our $n$. About these Katz assumes that $R \subseteq \mathbb C$ is smooth and finitely generated over $\mathbb Z$, which is obvious, as well as (in \cite[Hypotheses 4.0.1, 4.0.2, and 4.0.3]{katz}) that $V$ is smooth and that the geometric fibers of $X$ and $V$ are irreducible of dimension $n$, which is immediate in our case.

Next in \cite[(4.2)]{katz} Katz takes a constructible $\overline{\mathbb Q}_\ell$-sheaf $\mathcal F$ on $X[1/\ell]$ which is mixed of weight $\leq 0$, adapted to some stratification of $X$, and such that $j^* \mathcal F$ is lisse and pure of weight zero. We take for $\mathcal F$ the sheaf $\mathcal L_\chi(C(x_1,\dots,x_n))$ on $\mathbb A^n_{\mathbb Z}$, and these hypothesis are verified by Lemma \ref{lisse-pure}. Katz defines $K = j_{!*} ( j^* \mathcal F[n])$.

Finally, in the statement of \cite[Theorem 4.10]{katz}, Katz takes a constant $M''$, and we let $M''=0$. He assumes in addition that $\mathcal F$ is geometrically irreducible and the $A$-number of 
$\mathcal F$ is at least $2$. The geometric irreducibility follows from Lemma \ref{lisse-pure} and the $A$-number is at least $2$ by Lemma \ref{a-number}.

Katz's notation $|(i^* \mathcal F, F(s), s , \psi) | $  from (2.1), expands, in our case, where $i^* \mathcal F=\mathcal F$, $s=p$ and $F(s) = \mathbb F_p$, to  

\[ p^{-n} \sum_{\bmh{m} \in \mathbb F_p^{n}} \left| \sum_{\bmh{l} \in \mathbb F_p^{n}} \operatorname{trace}(\operatorname{Frop}_{\bmh{l},p} | \mathcal F_{\bmh{l}}) e_p ( \bmh{m} \cdot \bmh{l}) \right|\]
and by construction $\operatorname{trace}(\operatorname{Frop}_{\bmh{l},p} |\mathcal F_{\bmh{l}})=  \left( \frac{ C(\bmh{l} ) }{ p} \right) $ so Katz's bound \[p^{-n/2} |(i^* \mathcal F, F(s), s , \psi) |   \leq\left(1 - \frac{1}{4(1+A^2)}\right) \] specializes to our stated bound.

\end{proof}

\subsection{Lemmas on Sheaves}\label{3secapp}
We prove here some lemmas about $\ell$-adic sheaves that are used in the proofs of Lemmas \ref{primecase} and \ref{KfurD}. We will match as closely as possible the notation of \cite[section 4]{katz} in order to use some results from that paper.

We take $R = \mathbb Z[1/2]$, $r=n$, $E = \mathbb A^{n}_{\mathbb Z[1/2]}$ the trivial rank ${n}$ vector bundle, $E^\vee$ the trivial dual bundle. We let $X= \mathbb A^{n}_{\mathbb Z[1/2]}$ be all of $E$, and let $V$ be the subset of $X$ where $C \neq 0$. 

We let $\ell$ be any prime other than two, and let $\mathcal F$ be the sheaf $\mathcal L_\chi (C(x_1,\dots,x_{n}))$ on $\mathbb A^{n}_{\mathbb Z[1/2\ell]}$, where $\chi$ is the quadratic character. (As usual, the Kummer sheaf of the polynomial is defined to vanish on the vanishing locus of $C$.)

\begin{lem}\label{lisse-pure} The restriction of $\mathcal F$ to $V[1/\ell]$ is lisse, pure of weight zero, and geometrically irreducible when restricted to $V_{\mathbb F_p}$ for each prime $p \neq 2,\ell$.  \end{lem}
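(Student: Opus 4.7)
The plan is to verify the three properties for $\mathcal F = \mathcal L_\chi(C(x_1,\dots,x_n))$ separately, each of which reduces to a standard property of Kummer sheaves together with a basic functoriality of pullback.

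For the lisse property, I would recall that the Kummer sheaf $\mathcal L_\chi$ on $\mathbb G_{m,\mathbb Z[1/2\ell]}$ associated to the quadratic character $\chi$ is a rank-one lisse $\overline{\mathbb Q}_\ell$-sheaf, constructed from the Kummer cover $[2]:\mathbb G_m \to \mathbb G_m$. By construction, on $V[1/\ell]$ the polynomial $C$ is invertible, hence defines a morphism $C: V[1/\ell] \to \mathbb G_{m,\mathbb Z[1/2\ell]}$, and $\mathcal F = C^* \mathcal L_\chi$. Since the pullback of a lisse sheaf by a morphism is lisse, $\mathcal F|_{V[1/\ell]}$ is lisse.

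For purity, I would use that the stalks of $\mathcal L_\chi$ at $\mathbb F_q$-points of $\mathbb G_m$ have Frobenius acting by roots of unity (specifically by $\chi$-values, which lie on the unit circle), so $\mathcal L_\chi$ is pointwise pure of weight $0$; this purity is preserved under pullback by the morphism $C$. Thus $\mathcal F|_{V[1/\ell]}$ is pure of weight $0$.

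For geometric irreducibility on each fiber $V_{\mathbb F_p}$ with $p \neq 2, \ell$, the key observation is that $\mathcal F|_{V_{\mathbb F_p}}$ has rank one as a lisse sheaf, and any nonzero rank-one lisse sheaf is automatically irreducible (equivalently, simple as a representation of the geometric fundamental group). Thus geometric irreducibility is immediate, and I do not foresee any real obstacle: the whole lemma is an assemblage of standard facts about the Kummer sheaf and pullback under a morphism to $\mathbb G_m$, with the hypothesis $p \neq 2,\ell$ ensuring the construction of $\mathcal L_\chi$ is valid and the pullback is defined on all of $V_{\mathbb F_p}$.
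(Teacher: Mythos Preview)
Your proposal is correct and takes essentially the same approach as the paper: both identify $\mathcal F|_{V[1/\ell]}$ as a rank-one lisse sheaf coming from the quadratic Kummer construction (you via the pullback $C^*\mathcal L_\chi$ along $C:V\to\mathbb G_m$, the paper via the double cover $y^2=C$), deduce purity from the Frobenius eigenvalues being $\pm 1$, and deduce geometric irreducibility from the sheaf having rank one. The two descriptions are equivalent, so there is no substantive difference.
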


\begin{proof} $\mathcal F$ is the extension by zero from $V[1/\ell]$ of the lisse sheaf defined by the unique nontrivial one-dimensional representation of the fundamental group of $V[1/\ell]$ that factors through the automorphism group of the finite etale double cover $y^2 = C(x_1,\dots,x_{n})$. Hence it is lisse, and because its monodromy group has order two, each Frobenius element has order $1$ or $2$, and thus is pure of weight zero. It is geometrically irreducible because it corresponds to a one-dimensional representation, and all such representations are irreducible. \end{proof}

Let $j: V[1/\ell] \to X[1/\ell]$ be the inclusion. Following Katz, let  $K= j_{!*} (j^* \mathcal F[{n}]) $, where $j_{!*}$ is the middle extension of perverse sheaves.

The following simple lemma about \'{e}tale cohomology has probably appeared in the literature before:

\begin{lem}\label{easy-etale} Let $Y$ be a smooth scheme over $\mathbb Z[1/\ell]$, let $D$ be a strict normal crossings divisor on $Y$ relative to $\mathbb Z$, let $j: Y- D \to Y$ be the open inclusion, and let $\mathcal L$ be a lisse $\mathbb Q_\ell$-sheaf of rank one on $Y-D$, with nontrivial local monodromy around the generic point of each irreducible component of $D$.  Then $Rj_* \mathcal L$ vanishes on $D$. \end{lem}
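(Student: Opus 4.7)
The plan is to check that $Rj_* \mathcal L$ vanishes on $D$ stalk-by-stalk at geometric points. Fix a geometric point $\bar x \in D$. Since $Rj_*$ commutes with passage to the strict henselization, the stalk $(Rj_* \mathcal L)_{\bar x}$ is computed by $R\Gamma(Y^{(\bar x)} - D, \mathcal L)$, where $Y^{(\bar x)} = \operatorname{Spec}(\mathcal O_{Y,\bar x}^{sh})$ and $\mathcal L$ denotes the pulled back sheaf. The strict normal crossings hypothesis lets me pick \'{e}tale coordinates $x_1,\ldots,x_n$ near $\bar x$ so that the $k \geq 1$ components of $D$ through $\bar x$ are exactly the coordinate hyperplanes $\{x_i = 0\}$ for $i = 1,\ldots,k$. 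Hence $Y^{(\bar x)} - D$ factors locally as a product of $k$ strictly henselian punctured traits and a strictly henselian smooth factor of dimension $n-k$.

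Next I would apply a K\"{u}nneth decomposition to split $R\Gamma(Y^{(\bar x)} - D, \mathcal L)$ into a tensor product with one factor of the form $R\Gamma(I_i, \chi_i)$ for each $i = 1,\ldots,k$, together with a trivial $\mathbb Q_\ell$ factor from the residual smooth piece. Here $I_i$ is the inertia group of the generic point of the $i$-th component of $D$, and $\chi_i$ is the restriction to $I_i$ of the rank-one character associated with $\mathcal L$; by hypothesis each $\chi_i$ is nontrivial.

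The local computation I would then carry out is that $R\Gamma(I, \chi) = 0$ for any nontrivial continuous rank-one $\mathbb Q_\ell$-character $\chi$ of the inertia group $I$ of a henselian trait whose residue characteristic $p$ is different from $\ell$. Splitting by the wild-tame exact sequence $1 \to I_w \to I \to I_t \to 1$: since $I_w$ is pro-$p$, its image in $\mathbb Q_\ell^\times$ is finite of order coprime to $\ell$, so either $\chi|_{I_w}$ is nontrivial, in which case all $I_w$-cohomology vanishes and so does the total cohomology, or $\chi$ descends to a nontrivial $\mathbb Q_\ell$-character of the procyclic tame quotient $I_t$, whose only relevant quotient for $\ell$-adic cohomology is $\mathbb Z_\ell$ and on which nontriviality makes $1 - \chi(\sigma)$ invertible, killing both $H^0$ and $H^1$. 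Since a single vanishing tensor factor kills the K\"{u}nneth product, $(Rj_* \mathcal L)_{\bar x} = 0$.

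The main step requiring care is the K\"{u}nneth decomposition in the strictly henselian setting: one has to identify the relevant cohomology of the product of strictly henselian punctured traits with the tensor product of the cohomologies of the individual factors, and to check that the rank-one character $\mathcal L$ splits as an external tensor product of characters of the inertias $I_i$. This works precisely because the strict normal crossings hypothesis makes the components of $D$ smooth and transverse, so that the strict henselization of $Y - D$ at $\bar x$ really is a fibre product of the corresponding one-variable strict henselian punctured traits (up to the inert smooth factor), and the tame fundamental group decomposes accordingly into a product of the individual inertias.
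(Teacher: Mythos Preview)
Your proposal is correct and follows essentially the same route as the paper: reduce to a local statement, use the strict normal crossings hypothesis to put $D$ in coordinate form, apply a K\"unneth-type decomposition to reduce to the one-variable case, and then observe that the inertia invariants and coinvariants of a nontrivial rank-one representation vanish. The paper's proof is slightly terser: it works \'etale-locally on $Y=\mathbb A^n$ with $D=\{x_1\cdots x_m=0\}$, and the step you correctly flag as requiring care---that $\mathcal L$ splits as an external tensor product of one-variable characters---is handled there by a direct appeal to Abhyankar's lemma, giving $\mathcal L=\bigotimes_i \mathcal L_{\chi_i}(x_i)$. This also makes the tameness implicit, so the paper does not need your separate wild/tame dichotomy, though your treatment of that point is fine.
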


\begin{proof} This is an \'{e}tale-local statement, so we may work \'{e}tale-locally, and thus we may assume $Y = \mathbb A^n$ and $D$ is given by the equation $x_1 x_2 \dots x_m=0$ for some $m \leq n$. By Abyankhar's lemma, in this case $\mathcal L = \otimes_{i=1}^m \mathcal L_{\chi_i} (x_i)$. We may apply the K\"{u}nneth formula to calculate the pushforward, which reduces us to the case where $n=1$, where we must check the stalk of the pushforward at $0$ vanishes. But that stalk is a complex consisting of the inertia invariants and coinvariants of the local monodromy representation, which both vanish as it is one-dimensional and nontrivial. \end{proof}

\begin{lem}\label{middle-extension} We have an isomorphism $K= \mathcal F[{n}]$. In particular, $\mathcal F[n]$ is perverse. \end{lem}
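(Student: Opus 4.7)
The plan is to reduce the claim to showing that $Rj_*(j^*\mathcal F)$ has vanishing stalks on $D = \{C = 0\}$. Since $\mathcal F$ is defined to vanish on $D$, we have $\mathcal F = j_!(j^*\mathcal F)$. If the vanishing holds, the canonical map $j_!(j^*\mathcal F[n]) \to Rj_*(j^*\mathcal F[n])$ is an isomorphism, and its image, the middle extension $K$, coincides with $\mathcal F[n]$; this simultaneously shows $\mathcal F[n]$ is perverse. So I verify the stalk vanishing of $Rj_*(j^*\mathcal F)$ at every point of $D$, splitting into the two cases below.

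Away from the origin: Euler's identity $3C = \sum x_i \, \partial C/\partial x_i$ combined with the nonsingularity of the cubic form $C$ shows that the singular locus of $D \subset \mathbb A^n$ is exactly $\{0\}$. So $D \setminus \{0\}$ is a smooth irreducible divisor in $X \setminus \{0\}$, and the local monodromy of $\mathcal L_\chi(C)$ around this divisor is the quadratic character $\chi$, which is nontrivial. Lemma \ref{easy-etale} applied in a neighborhood of any point of $D \setminus \{0\}$ gives the vanishing of stalks there.

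At the origin: the plan is to blow up and apply Lemma \ref{easy-etale} upstairs. Let $\pi: \tilde X \to X$ be the blowup at $0$, with exceptional divisor $E \cong \mathbb P^{n-1}$ and strict transform $\tilde D$ of $D$. In a standard chart with coordinates $(z_1,\dots,z_n)$ where $\pi(z_1,\dots,z_n) = (z_1, z_1z_2,\dots,z_1z_n)$, the pullback factors as $\pi^*C = z_1^3 \, C(1,z_2,\dots,z_n)$, so $\tilde D$ is cut out by $C(1,z_2,\dots,z_n)$. This equation defines an affine open subset of the smooth projective hypersurface $\{C=0\} \subset \mathbb P^{n-1}$ and is independent of $z_1$, so $\tilde D$ is smooth and meets $E = \{z_1 = 0\}$ transversally, making $\tilde D \cup E$ a strict normal crossings divisor. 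Because $\chi$ is quadratic, $\chi(z_1^3) = \chi(z_1)$, and hence $\pi^*\mathcal L_\chi(C)|_{\tilde V} \cong \mathcal L_\chi(z_1) \otimes \mathcal L_\chi(C(1,z_2,\dots,z_n))$, which has nontrivial monodromy around both components. Let $\tilde j: \tilde V := \tilde X \setminus (\tilde D \cup E) \to \tilde X$. Lemma \ref{easy-etale} then gives $R\tilde j_*(\pi^*\mathcal L_\chi(C)|_{\tilde V}) = 0$ on $\tilde D \cup E$, in particular on $E$. Since $\pi|_{\tilde V}: \tilde V \to V$ is an isomorphism and $\pi$ is proper, proper base change yields
\[
(Rj_* j^*\mathcal F)_0 \;=\; R\Gamma\bigl(E,\, (R\tilde j_* \pi^* j^*\mathcal F)|_E\bigr) \;=\; 0,
\]
completing the argument.

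The main obstacle is the origin, where $D$ is singular and Lemma \ref{easy-etale} does not apply directly; this is overcome by resolving the singularity via a single blow-up, after which $\tilde D \cup E$ is strict normal crossings and the monodromy of the pulled-back Kummer sheaf remains $\chi$ around each component (crucially using that $\chi$ is quadratic and $3$ is odd, so the monodromy around $E$ is nontrivial).
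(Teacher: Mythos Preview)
Your proof is correct and follows essentially the same approach as the paper: show that $j_!(j^*\mathcal F[n]) \to Rj_*(j^*\mathcal F[n])$ is an isomorphism by verifying vanishing of $Rj_*$ on $\{C=0\}$, and do this by passing to the blowup of $\mathbb A^n$ at the origin where the total transform $\tilde D \cup E$ is strict normal crossings, then invoking Lemma~\ref{easy-etale} together with proper base change. The only organizational difference is that you split into the cases ``away from the origin'' and ``at the origin'' and use the blowup only for the latter, whereas the paper works on the blowup globally from the start (since the blowup is an isomorphism over $V$, this handles all points at once); both routes amount to the same computation.
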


\begin{proof}  The open set $V$ of $\mathbb A^{n}$ where $C \neq 0$ is also an open subset of the blowup of $\mathbb A^{n}$ at the origin. Let $j'$ be the inclusion of this open set. Then we first show that $R j'_* j^* \mathcal F[n] $ vanishes away from the image of $j'$. To do this, we observe that $j'$ is the complement of the normal crossings divisor $D$ containing two components, the strict transform of the vanishing locus of $C$ and the exceptional divisor. Because $C$ is irreducible, it vanishes to order $1$ on generic point of the strict transform of its vanishing locus, and because it is homogeneous of degree $3$, it vanishes to order $3$ at the generic point of the exceptional divisor. Because both these numbers are odd, $\mathcal L_\chi(C)$ has nontrivial local monodromy around the vanishing sets of these divisors. Hence by Lemma \ref{easy-etale}, the natural map $ R j'_* j^* \mathcal F[n] $ vanishes away from the image of $j'$. 

By the Leray spectral sequence, the pushforward of $ R j'_* j^* \mathcal F[n] $ from the blow-up of $\mathbb A^{n}$ to $\mathbb A^{n}$ is $R j_* j^* \mathcal F[n]$. By the proper base change formula, the stalk of $ R j'_* j^* \mathcal F[n] $ vanishes away from the image of $j$. Hence the natural map $R j_! j^* \mathcal F[n] \to R j_* j^* \mathcal F[n]$ is an isomorphism. Because $Rj_{!*}j^* \mathcal F[n]$ is defined as the image of that map on zeroth perverse homology, it is isomorphic to both sides as well.

Because $\mathcal F[n]$ is the middle extension of a perverse sheaf, it is perverse.\end{proof}

In \cite[Lemma 4.6]{katz}, Katz defines a notion of the $A$-number of a sheaf $\mathcal F$ and proves it is equivalent to the notion of $A$-number of a perverse sheaf defined in \cite[p. 127]{fok} as the rank of the Fourier transform of the associated middle extension sheaf $K$. In the next lemma, we calculate this $A$-number, using the formula of \cite[Lemma 4.6]{katz}. (It is important for us that these two definitions give the same number as we use this lemma once in the setting of \cite{katz} and once in the setting of \cite{fok}.)
\begin{lem}\label{a-number} The $A$-number of $\mathcal F$ is $2^{n-1}$. \end{lem}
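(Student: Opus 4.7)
The plan is to use the characterization from \cite[Lemma 4.6]{katz} of the $A$-number as the generic rank of the Fourier transform of the middle extension $K = \mathcal{F}[n]$; equivalently, this equals $(-1)^n \chi_c(\mathbb{A}^n, \mathcal{F} \otimes \mathcal{L}_\psi(m \cdot x))$ for generic $m$ (working over a geometric fiber). Since $\mathcal{F} = \mathcal{L}_\chi(C)$ appears as the sign-isotypic summand of $\pi_* \overline{\mathbb{Q}}_\ell$ for the double cover $\pi\colon Y \to \mathbb{A}^n$, $Y = \{y^2 = C(x)\}$, combining the projection formula with the Artin--Schreier vanishing $\chi_c(\mathbb{A}^n, \mathcal{L}_\psi(m \cdot x)) = 0$ for $m \neq 0$ reduces the problem to computing $\chi_c(Y, \mathcal{L}_\psi(m \cdot x))$.

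To compute this, project $p \colon Y \to \mathbb{A}^1$ via $f = m \cdot x$ and stratify $\mathbb{A}^1 = \{0\} \sqcup \mathbb{G}_m$. The generic fiber $Y_t$ ($t \neq 0$) is a smooth affine double cover of $H_t \cong \mathbb{A}^{n-1}$ branched along the smooth affine cubic $V_t = \{C|_{H_t} = 0\}$. Using $\chi_c(Y_t) = 2 - \chi_c(V_t)$, the Lefschetz hyperplane theorem applied to the smooth projective cubic in $\mathbb{P}^{n-1}$ and its hyperplane section at infinity, together with the Chern-class formula for $\chi$ of smooth projective cubic hypersurfaces, a short induction yields the clean expression $\chi_c(Y_t) = 1 + (-1)^{n-1} 2^{n-1}$. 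The special fiber $Y_0$ is the double cover of $H_0$ branched along the affine cone over a smooth projective cubic in $\mathbb{P}^{n-2}$; a direct stratification gives $\chi_c(Y_0) = 1$, so $\chi_c(Y_0 \setminus \{0\}) = 0$.

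The $\mathbb{G}_m$-action $(x, y) \mapsto (tx, t^{3/2} y)$ coming from the quasi-homogeneity of $y^2 - C(x)$ (with weights $(2, \dots, 2, 3)$) rescales $f$ by $t$, showing that the local monodromy of $Rp_! \overline{\mathbb{Q}}_\ell$ at both $0$ and $\infty$ is of finite order, hence tame. Applying Grothendieck--Ogg--Shafarevich to $Rp_! \overline{\mathbb{Q}}_\ell \otimes \mathcal{L}_\psi(t)$ on $\mathbb{A}^1$, where the only Swan contribution comes from $\mathcal{L}_\psi$ at $\infty$, yields
\[
\chi_c\bigl(\mathbb{A}^1, Rp_! \overline{\mathbb{Q}}_\ell \otimes \mathcal{L}_\psi\bigr) = -\chi_c(Y_t) + \chi_c(Y_0 \setminus \{0\}) = -\bigl(1 + (-1)^{n-1} 2^{n-1}\bigr).
\]
Adding the singular-point contribution $\chi_c(\{0\}, \mathcal{L}_\psi(0)) = 1$ gives $\chi_c(Y, \mathcal{L}_\psi(m \cdot x)) = (-1)^n 2^{n-1}$, and hence $A(\mathcal{F}) = 2^{n-1}$.

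The main obstacle is the technical care needed in the Euler--Poincar\'e bookkeeping: treating $Rp_! \overline{\mathbb{Q}}_\ell$ as a constructible complex (not a single sheaf), verifying tameness of its monodromy at $0$ and $\infty$ via the $\mathbb{G}_m$-action, and carrying out the Lefschetz--Chern class computation of $\chi_c(Y_t)$ for the family of smooth affine cubic hypersurfaces so that the cancellations conspire to yield exactly $2^{n-1}$.
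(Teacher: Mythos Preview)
Your proof is correct, but it follows a genuinely different route from the paper's argument.

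The paper applies Katz's hyperplane formula directly: by \cite[Lemma 4.6]{katz} the $A$-number equals $\chi(\mathbb{A}^n_{\mathbb{C}},K)-\chi(H,K|_H)$ for a general affine hyperplane $H$. Using $K=\mathcal{F}[n]$, this reduces to $(-1)^n\bigl(\chi(V_{\mathbb{C}})-\chi(V_{\mathbb{C}}\cap H)\bigr)$. The paper then observes that $\chi(V_{\mathbb{C}})=0$ via the free $S^1$-action on the complement of a homogeneous hypersurface, and computes $\chi(V_{\mathbb{C}}\cap H)$ by identifying $H$ with $\mathbb{P}^{n-1}\setminus\mathbb{P}^{n-2}$ and taking the difference of Euler characteristics of two smooth projective cubic hypersurfaces, obtaining $(-2)^{n-1}$.

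You instead realize the $A$-number as the generic Fourier stalk $(-1)^n\chi_c(\mathbb{A}^n,\mathcal{F}\otimes\mathcal{L}_\psi(m\cdot x))$, pass to the double cover $Y=\{y^2=C(x)\}$ via $\pi_*\overline{\mathbb{Q}}_\ell=\overline{\mathbb{Q}}_\ell\oplus\mathcal{F}$, and evaluate $\chi_c(Y,\mathcal{L}_\psi(m\cdot x))$ by projecting to $\mathbb{A}^1$ along the linear form and applying Grothendieck--Ogg--Shafarevich, with tameness at $0$ and $\infty$ supplied by the quasi-homogeneous $\mathbb{G}_m$-action. Both approaches ultimately feed on the same Euler characteristics of smooth cubic hypersurfaces (your $\chi_c(Y_t)=2-\chi_c(V_t)$ involves exactly the affine cubic $V_t$ that the paper computes), but your packaging is more elaborate: it requires the tameness argument and GOS bookkeeping, whereas the paper's route is purely topological. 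On the other hand, your approach makes the link to the double cover $Y$---precisely the variety whose points the main theorem counts---explicit and transparent, which is conceptually pleasant. A minor presentational point: your splitting off of the origin of $Y$ and then ``adding back'' $\chi_c(\{0\},\mathcal{L}_\psi(0))=1$ is correct but slightly obscures the computation; it is cleaner to keep $p:Y\to\mathbb{A}^1$ intact and write $\chi_c(Y,\mathcal{L}_\psi\circ p)=\chi_c(Y_0)-\chi_c(Y_t)=1-(1+(-1)^{n-1}2^{n-1})=(-1)^n 2^{n-1}$ in one line.
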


\begin{proof} Let $H$ be a general hyperplane in $\mathbb A^{n}_\mathbb C$, which is some translation of a general hyperplane $H'$ through the origin. Let us first calculate the Euler characteristic of the vanishing set in $H$ of $C$. Every line through the origin either intersects exactly one point of $H$ or lies in $H'$, but not both, so the  projection from zero to infinity, defines an isomorphism between $H$ and $\mathbb P^{n-1} - \mathbb P(H')$, so this vanishing set is equal to the complement inside the vanishing set in $\mathbb P^{n-1}$ of a nonsingular cubic form of the vanishing set in $\mathbb P^{n-2}$ of a nonsingular cubic form, hence its Euler characteristic is the difference of the Euler characteristics of two smooth cubic hypersurfaces.   These have Euler characteristics $(1/3)  ( (-2)^{n} - 1) + n-1$ and $(1/3) ( (-2)^{n-1} -1 ) + n-2$ respectively, so the vanishing set of $C$ in $H$ has Euler characteristic $- (-2)^{n-1}+1= 1 - (-2)^{n-1} $.

The complement of the vanishing set of $H$ in $C$ is $V_C \cap H$. Because $H$, a hyperplane, has Euler characteristic $1$, it has Euler characteristic $(-2)^{n-1}$.

By \cite[Lemma 4.6]{katz}, the $A$-number of $\mathcal F$ is the Euler characteristic of $K$ (as a sheaf on $\mathbb A^{n}_{\mathbb C})$ minus the Euler characteristic of the restriction of $K$ to a general hyperplane $H$. By Lemma \ref{middle-extension}, this is $(-1)^{n}$ times $\chi(\mathbb A^{n}_{\mathbb C}, \mathcal F) - \chi(H, \mathcal  F)$. Because the Euler characteristic of the extension of a sheaf by zero is the Euler characteristic of the original sheaf, this is $\chi(V_{\mathbb C}, \mathcal L_\chi ( C(x_1,\dots, x_{n}))) - \chi ( V_{\mathbb C} \cap H, \mathcal L_\chi ( C(x_1,\dots, x_{n})))$. Because $\mathcal L_\chi$ is locally trivial of rank one and the Euler characteristic is local, this is the same as $\chi (V_{\mathbb C}) - \chi(V_{\mathbb C} \cap H)$. Because $V_{\mathbb C}$ is the complement of the solution set of a homogeneous polynomial equation, it admits a free action of $S^1$ by multiplying each coordinate by unit complex numbers, and hence has Euler characteristic zero.  We have seen that $\chi(V_{\mathbb C} \cap  H)$ has Euler characteristic $(-2)^{n-1}$, so \[A(\mathcal F) = (-1)^{n} ( 0 - (-2)^{n-1} ) = 2^{n-1}.\]

\end{proof}

\section{Additional Results} 
\label{additional results}
We have gathered all the necessary estimates related to the exponential sums and the appearing integrals. To put these together we need some results on objects that will appear in the final estimation. We start by restating Lemma 22 and 23 of \cite{h9}.
\begin{lem}\label{HoL22}
If $\Delta ,l\neq0$ and $A_{2}\geq 1$, then
\begin{align*}
\sum_{\substack{k\leq y\\(k,l)=1}}(k,\Delta)^{\frac{1}{2}}A_{2}^{\omega(k)}\ll \sigma_{-\frac{1}{4}}(\Delta,l)y(\log 2y)^{A_{2}-1},
\end{align*}
where
\begin{align*}
\sigma_{s}(\Delta,l)=\sum_{\substack{d|\Delta\\(d,l)=1}}d^{s}.
\end{align*}
\end{lem}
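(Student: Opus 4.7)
Since this lemma is a restatement of Lemma 22 of \cite{h9}, the proof only needs to be sketched. The plan proceeds in three stages: decouple the factor $(k,\Delta)^{1/2}$ from the summation variable by a divisor expansion, reduce the inner sum to a standard mean value of $A_2^{\omega(\cdot)}$, and control the remaining finite divisor sum by a prime-by-prime comparison with $\sigma_{-1/4}(\Delta,l)$.

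First, introduce the multiplicative function $\phi$ defined by $\phi(p^j)=p^{j/2}-p^{(j-1)/2}$ for $j\ge 1$, chosen so that $\sum_{d\mid n}\phi(d)=n^{1/2}$ and $0\le\phi(d)\le d^{1/2}$. Substituting $(k,\Delta)^{1/2}=\sum_{d\mid(k,\Delta)}\phi(d)$ and swapping orders of summation gives
\begin{align*}
\sum_{\substack{k\le y\\(k,l)=1}}(k,\Delta)^{1/2}A_2^{\omega(k)}=\sum_{\substack{d\mid\Delta\\(d,l)=1}}\phi(d)\sum_{\substack{k\le y,\,d\mid k\\(k,l)=1}}A_2^{\omega(k)}.
\end{align*}
Write $k=dm$ with $(m,l)=1$ and use the submultiplicativity $A_2^{\omega(dm)}\le A_2^{\omega(d)}A_2^{\omega(m)}$; the inner sum is then bounded by $A_2^{\omega(d)}\sum_{m\le y/d}A_2^{\omega(m)}\ll A_2^{\omega(d)}(y/d)(\log 2y)^{A_2-1}$, by the classical Mertens/Selberg--Delange type estimate for the mean value of $A_2^{\omega(\cdot)}$.

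It then remains to show
\begin{align*}
\sum_{\substack{d\mid\Delta\\(d,l)=1}}\frac{\phi(d)A_2^{\omega(d)}}{d}\ll_{A_2}\sigma_{-1/4}(\Delta,l).
\end{align*}
Since both sides are multiplicative in $\Delta$ for fixed $l$, this reduces to a comparison of local factors at each prime $p\nmid l$ with $a=v_p(\Delta)\ge 1$: a short computation shows the left local factor equals $1+A_2 p^{-1/2}(1-p^{-a/2})$, while the right local factor exceeds $1+p^{-1/4}$. For every prime $p\ge A_2^4$ one has $A_2 p^{-1/2}\le p^{-1/4}$, so the left factor is dominated by the right; the finitely many smaller primes contribute only a bounded $O_{A_2}(1)$ multiplicative constant. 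The only mildly delicate point of the argument is this local comparison, and in particular tracking how the implicit constant depends on $A_2$; once it is in place, multiplying through with the inner sum estimate from the previous paragraph yields the stated bound.
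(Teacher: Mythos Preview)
Your proof is correct. The paper itself does not reproduce a proof but simply quotes the statement as Lemma~22 of \cite{h9}; your sketch supplies the standard argument behind that reference (divisor decomposition of $(k,\Delta)^{1/2}$, the mean value $\sum_{m\le x}A_2^{\omega(m)}\ll x(\log 2x)^{A_2-1}$, and a local comparison of Euler factors), and all steps are sound, including the telescoping identity for $\phi$, the local factor computation $1+A_2p^{-1/2}(1-p^{-a/2})$, and the handling of the finitely many primes $p<A_2^4$.
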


\begin{lem}\label{HoL23}
Let $A_2\geq 1$, $A_3>0$, \begin{align*}
Z=X^{A_{3}/\log\log X},
\end{align*}
and $k_1^\dag$ be a squarefree number composed only of prime numbers less than $Z$. There exists and $A_4$ such that for
\begin{align*}
X< u \leq X^2 \text{,  } \delta\neq 0\text{,  }l\leq X^{\frac{1}{8}}\text{,  }(c,l)=1,
\end{align*}
we have
\begin{align*}
\sum_{\substack{k_1^\dag\leq u\\k\equiv c(l)}}A_{2}^{\omega(k_1^\dag)}(\Delta,k_1^\dag)^{\frac{1}{2}}\ll \frac{u(\log \log X)^{A_{4}}\sigma_{-\frac{1}{4}}(\Delta,l)}{\phi(l)\log X}
\end{align*}
and
\begin{align*}
\sum_{\substack{k_1^\dag\leq u\\(k_1^\dag,l)=1}}A_{2}^{\omega(k_1^\dag)}(\Delta,k_1^\dag)^{\frac{1}{2}}\ll \frac{u(\log \log X)^{A_{4}}\sigma_{-\frac{1}{4}}(\Delta,l)}{\log X}.
\end{align*}
\end{lem}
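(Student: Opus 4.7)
The statement is a direct restatement of Lemma 23 of Hooley's \cite{h9}, and my plan is to prove it by following Hooley's argument essentially verbatim, adapting only the notation. The overall strategy combines three classical ingredients: the multiplicative structure of the weight, orthogonality of Dirichlet characters to handle the arithmetic progression, and a Rankin-type bound that exploits the smoothness condition.

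First, since $k_1^\dag$ is squarefree, the weight $A_2^{\omega(k_1^\dag)}(\Delta,k_1^\dag)^{1/2}$ factorizes multiplicatively as $\prod_{p|k_1^\dag}A_2 g(p)$, where $g(p)=p^{1/2}$ if $p\mid\Delta$ and $g(p)=1$ otherwise. For the first estimate I would remove the condition $k_1^\dag\equiv c(l)$ by orthogonality of Dirichlet characters modulo $l$; the principal character yields the $1/\phi(l)$ in the main term, and the non-principal characters give a manageable error that can be absorbed into the $(\log\log X)^{A_4}$ factor. For the second estimate the coprimality condition $(k_1^\dag,l)=1$ is built directly into the subsequent Euler product.

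The decisive step is Rankin's trick applied with a parameter $\sigma=1-c/\log Z$ with $c=c(A_2,A_3)$ chosen sufficiently large. Using $(u/k_1^\dag)^\sigma\geq 1$ one obtains
$$
\sum_{\substack{k_1^\dag\leq u\\ P^+(k_1^\dag)<Z\\ (k_1^\dag,l)=1}}A_2^{\omega(k_1^\dag)}(\Delta,k_1^\dag)^{1/2}\;\leq\;u^\sigma\!\!\prod_{\substack{p<Z\\ p\nmid l}}\Bigl(1+\frac{A_2 g(p)}{p^\sigma}\Bigr).
$$
Since $u\leq X^2$ and $\log Z=A_3\log X/\log\log X$, the factor $u^\sigma=u\exp(-c\log u/\log Z)$ contributes a saving of $(\log X)^{-2c/A_3}$. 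The Euler factors at primes $p\mid\Delta$ contribute $\prod_{p\mid\Delta,p\nmid l}(1+O(p^{-1/2}))$, since $p^{1/2-\sigma}\leq e^c p^{-1/2}$ for $p<Z$; this is bounded by $\sigma_{-1/2}(\Delta,l)$ up to constants, and hence by $\sigma_{-1/4}(\Delta,l)$. The Euler factors at primes coprime to $\Delta l$ contribute a factor that, by standard Mertens-type estimates, is bounded by $(\log\log X)^{A_4}$ once $A_4$ is taken large enough depending on $A_2,A_3,c$.

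The main obstacle is the delicate balancing between the Rankin saving $u^\sigma$ and the size of the Euler product over the unramified primes, and showing that the residual contribution absorbs into $(\log\log X)^{A_4}$ rather than a worse power of $\log X$. This is exactly the bookkeeping Hooley carries out in \cite[Lemma~23]{h9}; the argument carries over to our setting without substantive change, since neither the range of $u$ nor the shape of the weight differs from his.
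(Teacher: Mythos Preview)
There is a genuine gap, and it stems from a typo in the stated lemma that you have taken at face value. In the paper (and in Hooley's original Lemma~23) the variable $k_1^\dag$ is the product of the prime factors of $k_1$ that are \emph{greater} than $Z$, not less; compare the definition $k_1^\dag=\prod_{p\mid k_1,\,p>Z}p$ in \S\ref{3sec5}. With the smoothness reading you adopt, your Rankin argument cannot close. The crucial error is the claim that the Euler product over $p<Z$, $p\nmid\Delta l$ is bounded by $(\log\log X)^{A_4}$. In fact, for $\sigma=1-c/\log Z$ one has $p^{-\sigma}\le e^c/p$ for $p<Z$, so
\[
\prod_{\substack{p<Z\\ p\nmid\Delta l}}\Bigl(1+\frac{A_2}{p^{\sigma}}\Bigr)\;\gg\;\prod_{p<Z}\Bigl(1+\frac{A_2}{p}\Bigr)\;\asymp\;(\log Z)^{A_2}\;\asymp\;\Bigl(\frac{\log X}{\log\log X}\Bigr)^{A_2},
\]
a genuine power of $\log X$. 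Meanwhile the Rankin saving $u^{\sigma-1}=\exp(-c\log u/\log Z)$ is only $(\log X)^{-O(c/A_3)}$, so one would need $c/A_3$ to dominate $A_2e^{c}$, which is impossible. The balancing you allude to simply does not exist in this regime.

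The correct mechanism, which is what Hooley actually does, uses the roughness condition $P^-(k_1^\dag)>Z$. After detecting the progression by characters modulo $l$ (this part of your sketch is fine), one notes that $\omega(k_1^\dag)\le \log u/\log Z\le (2/A_3)\log\log X$, and more to the point that the relevant Euler product now runs only over primes $p\in(Z,u]$: by Mertens,
\[
\prod_{Z<p\le u}\Bigl(1+\frac{A_2}{p}\Bigr)\;\asymp\;\Bigl(\frac{\log u}{\log Z}\Bigr)^{A_2}\;\ll\;(\log\log X)^{A_2},
\]
which is the source of the $(\log\log X)^{A_4}$. The saving of $1/\log X$ comes not from Rankin but from the sifting out of small primes: a Selberg--Delange (or elementary convolution) argument gives
\[
\sum_{\substack{n\le u\\ P^-(n)>Z}}A_2^{\omega(n)}\;\ll\;\frac{u(\log u)^{A_2-1}}{(\log Z)^{A_2}}\;\ll\;\frac{u(\log\log X)^{A_2}}{\log X},
\]
and the factor $(\Delta,k_1^\dag)^{1/2}$ is handled by the divisor-switching that produces $\sigma_{-1/4}(\Delta,l)$. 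So your outline must be reworked with the roughness (not smoothness) hypothesis; once that is done the argument is indeed Hooley's, and the paper's ``proof'' is simply the citation.
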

The next results deal with sums in which both $Q$ and $J$ appear. The three appearing cases are related to the different average results for $Q(\bm m,y)$ as stated in Lemma \ref{6.1} and Lemma \ref{6.2}. 
\begin{lem} \label{Jsum}
For $l_3\leq A_1X^{\frac{3}{2}}$. We then have for $(a,l_3)=1$ and any $M\geq 1$
\begin{align*}
\sum_{\bmh{m}\neq 0}||\bm{m}||_X^{\epsilon}|Q(a\bm{m},l_3)|J(\bm{m},Y)\ll_M& \Bigl(\frac{1}{|m_{n+1}|+1}\Bigr)^M \Bigl\{ \frac{X^{\epsilon}\min(X,Y)^{\frac{1}{2}n-\frac{1}{2}}l_3^{\frac{5}{6}n+\frac{3}{2}}}{X^{\frac{1}{2}n-\frac{1}{2}}}\\&+\frac{X^{\epsilon}Y^{\frac{1}{2}n-\frac{1}{2}}N^{\frac{1}{2}n+\frac{1}{2}}l_3^{\frac{1}{2}n+\frac{3}{2}}}{X^{n}}\Bigr \}.
\end{align*}
\end{lem}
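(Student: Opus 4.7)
The plan is to fix $m_{n+1}$ and dyadically decompose the $\bmh{m}$-sum by $\|\bmh{m}\|\asymp y$, combining Lemma \ref{6.1} for the $|Q|$-average with the piecewise bound \eqref{Jdef} on $J$. Within each block, $\|\bm{m}\|_X$ is comparable to $\max(y,X^{1/2}|m_{n+1}|)$, and this dictates which of the three regimes of $J$ applies.

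Lemma \ref{6.1} delivers
$$\sum_{\|\bmh{m}\|\leq y}|Q(a\bm{m},l_3)|\ll l_3^{\frac{n}{2}+\frac{3}{2}+\epsilon}\bigl(y^n+l_3^{n/3}\bigr),$$
and the two terms on the right give rise to the two terms of the claimed bound. The $l_3^{n/3}$ piece is $y$-independent, so after weighting by $J$ and summing dyadically its contribution is dominated by the smallest effective scale in the medium regime: namely $y\asymp 1$ when $Y/X<1$ (the small regime is then empty, as $\bmh{m}\neq 0$ forces $\|\bmh{m}\|\geq 1$), and $y\asymp Y/X$ when $Y/X\geq 1$ (the transition from the small to the medium regime). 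In both subcases $J\asymp(Y/(X\|\bm{m}\|_X))^{(n-1)/2}$ at that scale, producing the first term with its $\min(X,Y)^{(n-1)/2}/X^{(n-1)/2}$ factor. The $y^n$ piece is increasing in $y$, so its contribution is maximised at the top of the medium regime $y\asymp N/X=X^{1/2}$, where $J\asymp(Y/N)^{(n-1)/2}$ up to logarithms; substituting $N=X^{3/2}$ yields the second term.

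The prefactor $(1/(|m_{n+1}|+1))^M$ arises from the third regime of $J$: whenever $|m_{n+1}|\geq 1$ one has $\|\bm{m}\|_X\geq X^{1/2}|m_{n+1}|\geq(N/X)|m_{n+1}|$, so the extra decay $(N/(X\|\bm{m}\|_X))^M$ from \eqref{Jdef} is at most $|m_{n+1}|^{-M}$; for $|m_{n+1}|\leq 1$ the prefactor is trivially $O(1)$. The main technical obstacle is coordinating the case analysis between the three $J$-regimes and the two terms of Lemma \ref{6.1}, and checking that the geometric series in each dyadic sum is indeed dominated at the expected endpoint (the smallest $y$ for the $l_3^{n/3}$ term, the largest for the $y^n$ term) so that the $\min(X,Y)$ interpolation emerges cleanly depending on whether $Y\leq X$ or $Y>X$. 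The logarithmic factors in \eqref{Jdef} and the $\|\bm{m}\|_X^\epsilon$ prefactor are absorbed harmlessly into $X^\epsilon$ using $N\leq X^{3/2}$.
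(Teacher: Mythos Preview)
Your approach is essentially the paper's: dyadic dissection in $\|\bmh{m}\|$ starting at the base scale $U=\max(1,Y/X)$, feeding Lemma~\ref{6.1} into the piecewise definition \eqref{Jdef} of $J$, with the choice of $U$ producing the $\min(X,Y)$ factor. For $m_{n+1}=0$ your sketch is correct and matches the paper.

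For $m_{n+1}\neq 0$ there is a gap. Your extraction $(N/(X\|\bm{m}\|_X))^M\leq|m_{n+1}|^{-M}$ is valid, but having spent the entire factor on $m_{n+1}$-decay you retain no residual decay in $y$ to control the dyadic sum of the $y^n$ piece. Indeed, when $m_{n+1}\neq 0$ one has $\|\bm{m}\|_X\geq X^{1/2}=N/X$, so every $\bmh{m}\neq 0$ lies in the third regime and your remark that the $y^n$ contribution is ``maximised at the top of the medium regime'' does not apply: after your extraction the remaining weight is $(Y/(X\|\bm{m}\|_X))^{(n-1)/2}$, and bounding $\|\bm{m}\|_X\geq y$ leaves a summand $\asymp y^{(n+1)/2}$, which diverges as $y\to\infty$. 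The fix, which is exactly \eqref{m_nloesung} in the paper, is to take $M=2M'$ and split
\[
\Bigl(\frac{N}{X\|\bm{m}\|_X}\Bigr)^{2M'}\leq\Bigl(\frac{N}{X\|\bmh{m}\|}\Bigr)^{M'}\Bigl(\frac{1}{|m_{n+1}|+1}\Bigr)^{M'},
\]
so that half the power furnishes the $m_{n+1}$-prefactor and the other half supplies third-regime decay in $\|\bmh{m}\|$, reducing matters to the $m_{n+1}=0$ analysis with exponent $M'$.
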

\begin{proof}
The result is the analogue of Lemma 15 of \cite{h9}. Consider first the case of $m_n=0$. We combine \eqref{Jdef} and Lemma \ref{6.1} by dyadic dissection with the condition $2^\alpha U\leq ||\bmh{m}|| < 2^{\alpha+1}U$, where $U=1$ if $Y\leq X$ but $U=Y/X$ if $Y>X$ to prove the Lemma. 
For $m_n\neq 0$ do the same with $M=2M'$ and use the inequality
\begin{align}\label{m_nloesung}
\Bigl(\frac{N}{X\lVert \bm{m} \rVert_X} \Bigr)^{2{M'}} \leq \Bigl(\frac{2N}{X\lVert \bmh{m} \rVert+|X^{\frac{1}{2}}m_{n+1}|} \Bigr)^{2{M'}}\leq \Bigl(\frac{N}{X\lVert\bmh{m} \rVert} \Bigr)^{M'} \Bigl(\frac{1}{|m_{n+1}|+1} \Bigr)^{M'}.
\end{align}
\end{proof}

\begin{lem}\label{JsumD0}
For $l_3\leq A_1X^{\frac{3}{2}}$ we have 
\begin{align*}
\sum_{\substack{\bm{m}\neq 0\\ \substack{F_1(\bmh{m})=0\\ F_2(\bmh{m})=0}}}||\bmh{m}||_X^{\epsilon}|Q((\bmh{m},0),k_3l_4)|J((\bmh{m},0),Y)\ll& \frac{X^{\epsilon}\min(X,Y)^{\frac{1}{2}n-\frac{1}{2}}k_3^{\frac{5}{6}n+\frac{4}{3}}l_4^{n+1}}{X^{\frac{1}{2}n-\frac{1}{2}}}\\&+\frac{X^{\epsilon}Y^{\frac{1}{2}n-\frac{1}{2}}N^{\frac{1}{2}n-\frac{3}{2}}l_3^{\frac{1}{2}n+2}}{X^{n-2}}
\end{align*}
and 
\begin{align*}
\sum_{\substack{\bm{m}\neq 0\\ F_1(\bmh{m})=0}}||\bmh{m}||_X^{\epsilon}|Q((\bmh{m},0),k_3l_4)|J((\bmh{m},0),Y)\ll& \frac{X^{\epsilon}\min(X,Y)^{\frac{1}{2}n-\frac{1}{2}}k_3^{\frac{5}{6}n+\frac{4}{3}}l_4^{n+1}}{X^{\frac{1}{2}n-\frac{1}{2}}}\\&+\frac{X^{\epsilon}Y^{\frac{1}{2}n-\frac{1}{2}}N^{\frac{1}{2}n-\frac{1}{2}}l_3^{\frac{1}{2}n+\frac{5}{3}}}{X^{n-1}}.
\end{align*}
\end{lem}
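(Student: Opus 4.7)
The plan is to mimic the proof of Lemma \ref{Jsum}, substituting Lemma \ref{6.2} for Lemma \ref{6.1}. Since we restrict to $m_{n+1}=0$ throughout, the norm $\|\bmh{m}\|_X$ coincides with $\|\bmh{m}\|$ and the decay factor \eqref{m_nloesung} used in the proof of Lemma \ref{Jsum} is not needed; the role played there by the bound on $|m_{n+1}|$ is moot.

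First I would set $U=1$ if $Y\leq X$ and $U=Y/X$ if $Y>X$, which is the scale at which $J$ transitions from its constant regime to its polynomially decaying one in \eqref{Jdef}. When $Y>X$, the short range $\|\bmh{m}\|<U$ (where $J\equiv 1$) is handled by a single application of Lemma \ref{6.2} at $y=Y/X$; its second term is subsumed into the target second main term using $Y\leq A_1 N$. The remaining vectors I would partition dyadically into annuli $A_\alpha=\{2^\alpha U\leq\|\bmh{m}\|<2^{\alpha+1}U\}$ for $\alpha\geq 0$, and on each annulus combine the bound for $J$ from the middle and large regimes of \eqref{Jdef} with Lemma \ref{6.2} applied at $y=2^{\alpha+1}U$.

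Lemma \ref{6.2} furnishes two terms: one independent of $y$, and a second of order $y^{n-2}(k_3l_4)^{n/2+2}$ (respectively $y^{n-1}(k_3l_4)^{n/2+5/3}$ in the $F_1=0$ case). Multiplied by the middle-regime weight $J\asymp (Y/(X\cdot 2^\alpha U))^{(n-1)/2}$ and summed over $\alpha$, the first term yields a convergent geometric series with ratio $2^{-\alpha(n-1)/2}$, producing the factor $\min(X,Y)^{(n-1)/2}/X^{(n-1)/2}$ of the target first main term. The second term gives a divergent geometric series with ratio $2^{\alpha(n-3)/2}$ (respectively $2^{\alpha(n-1)/2}$), which must be truncated at $2^\alpha U\asymp N/X$: beyond that scale the extra $(N/(X\|\bmh{m}\|))^M$ factor in $J$ makes the contribution negligible for $M$ large. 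Carrying out the truncated sums produces the factors $Y^{(n-1)/2}N^{(n-3)/2}/X^{n-2}$ and $Y^{(n-1)/2}N^{(n-1)/2}/X^{n-1}$, which match the two claimed second main terms; the $\|\bmh{m}\|^\epsilon$ and $\log^{n+1}$ factors are absorbed into $X^\epsilon$.

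The main obstacle is purely bookkeeping: the two cases $Y\leq X$ and $Y>X$ must be tracked separately to verify that $\min(X,Y)$ appears with the correct exponent in the first main term, and one needs to confirm that the geometric-sum truncations at the scale $N/X$ produce exactly the stated powers of $N$ in the two second main terms, without losing any cancellation present in the $F_1=F_2=0$ setting where the codimension jumps from $1$ to $2$.
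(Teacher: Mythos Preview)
Your proposal is correct and follows exactly the approach the paper takes: the paper's proof simply says ``obtained in the same way as the Lemma before, except now using Lemma \ref{6.2},'' and your dyadic dissection with threshold $U$, the application of Lemma \ref{6.2} on each annulus, and the convergent/divergent geometric-series bookkeeping are precisely the content of that reference to Lemma \ref{Jsum}. If anything, you have spelled out more of the details (the ratios $2^{-\alpha(n-1)/2}$, $2^{\alpha(n-3)/2}$, $2^{\alpha(n-1)/2}$ and the truncation at $N/X$) than the paper does.
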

\begin{proof}

The result is obtained in the same way as the Lemma before, except now using Lemma \ref{6.2}.

\end{proof}

The next type of result we need is related to the estimates of $D$ and $E$. To use them we have sort the sums over $\bmh{m}$ into residue classes and so will need a result that estimates sums over $J$ of this type, which in \cite{h9} are Lemma 24 and 25.

\begin{lem} \label{Jkongruentsumme} For $Y>X^{\frac{5}{4}}$,$l\leq X^{\frac{1}{7}}$, we have for all $M\geq 1$
\begin{align*}
\sum_{\bm{\hat{m}}\equiv \bm{\hat{b}} (l)}J((\bm{\hat{m}},m_{n+1}),Y)\ll_M \Bigl(\frac{1}{|m_{n+1}|+1}\Bigr)^M\frac{Y^{\frac{1}{2}n-\frac{1}{2}}N^{\frac{1}{2}n+\frac{1}{2}}}{X^{n}l^{n}}\log^{n+1}\Bigl(\frac{N}{Y}\Bigr)
\end{align*}
and with $\sigma_{-\frac{1}{4}}(\Delta,l)$ as before, we have further
\begin{align*}
\sum_{\substack{\bm{\hat{m}}\equiv \bm{\hat{b}}(l)\\F_1(\bmh{m})\neq 0}}J((\bm{\hat{m}},0),Y)\sigma_{-\frac{1}{4}}(F_1(\bmh{m}),l)\ll\frac{Y^{\frac{1}{2}n-\frac{1}{2}}N^{\frac{1}{2}n+\frac{1}{2}}}{X^{n}l^{n}}\log^{n+1}\Bigl(\frac{N}{Y}\Bigr).
\end{align*}
\end{lem}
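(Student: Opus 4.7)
Both estimates are direct analogues of Hooley's Lemmas 24 and 25 in \cite{h9}, and my plan adapts his dyadic decomposition argument. The first task is to extract the $m_{n+1}$-dependence. Since $N=X^{3/2}$, one has $X^{1/2}|m_{n+1}|\geq N/X$ whenever $|m_{n+1}|\geq 1$, which forces $\lVert\bm{m}\rVert_X>N/X$ and puts $J$ into the third branch of \eqref{Jdef}. Splitting the decay factor $(N/X\lVert\bm{m}\rVert_X)^{M}$ into two pieces and applying $\lVert\bm{m}\rVert_X\geq X^{1/2}|m_{n+1}|$ in one and $\lVert\bm{m}\rVert_X\geq\lVert\bmh{m}\rVert$ in the other, as in \eqref{m_nloesung}, yields
\[
J((\bmh{m},m_{n+1}),Y)\ll_M \Bigl(\tfrac{1}{|m_{n+1}|+1}\Bigr)^M J((\bmh{m},0),Y).
\]
For $m_{n+1}=0$ the prefactor is trivially $1$, so both parts reduce to bounding the corresponding sums with $m_{n+1}=0$.

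For the first estimate I decompose the sum dyadically into shells $V\leq\lVert\bmh{m}\rVert<2V$ with $V=2^{\alpha}Y/X$, $\alpha\geq 0$. The hypotheses $Y>X^{5/4}$ and $l\leq X^{1/7}$ give $V\gg l$ on every shell, so the number of residue-class points $\bmh{m}\equiv\bmh{b}\pmod l$ in the shell is $\ll(V/l)^n$. On that shell $J((\bmh{m},0),Y)\ll\log^{n+1}(2XV/Y)\,(Y/XV)^{(n-1)/2}$, with an extra cutoff $(N/XV)^{M}$ once $V>N/X$ giving absolute convergence. The resulting geometric series in $\alpha$ is dominated by the shell $V\asymp N/X$ and evaluates to $(Y/X)^{(n-1)/2}(N/X)^{(n+1)/2}l^{-n}\log^{n+1}(N/Y)$, which is the asserted bound.

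For the second estimate I expand the divisor sum as $\sigma_{-1/4}(F_1(\bmh{m}),l)=\sum_{d\mid F_1(\bmh{m}),\,(d,l)=1}d^{-1/4}$ and interchange summation. On a shell of radius $V$ only $d\leq cV^{\deg F_1}$ contributes. For each $(d,l)=1$, the Chinese Remainder Theorem together with the standard hypersurface count $\#\{F_1\equiv 0\pmod d\}\ll d^{n-1+\epsilon}$, valid because $F_1=0$ is a geometrically integral hypersurface, yields
\[
\#\{\bmh{m}\equiv\bmh{b}(l),\,d\mid F_1(\bmh{m}),\,\lVert\bmh{m}\rVert\asymp V\}\ll \frac{V^n}{d^{1-\epsilon}\,l^n}+\frac{V^{n-1}}{l^{n-1}},
\]
and weighting by $d^{-1/4}$ produces the absolutely convergent series $\sum_d d^{-5/4+\epsilon}=O(1)$. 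Thus no $V^{\epsilon'}$-loss occurs, and the remaining dyadic summation proceeds exactly as in the first part, giving the same bound. The main technical subtlety lies here: one needs the hypersurface count to hold uniformly in $d$, which requires dealing separately with the finitely many primes at which $F_1$ degenerates mod $p$ (absorbing them into the implicit constant) and checking that the boundary term $V^{n-1}/l^{n-1}$ does not disturb the leading-order geometric series.
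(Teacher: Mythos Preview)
Your plan matches the paper's: both arguments are Hooley's Lemmas 24--25 adapted to the weighted norm $\lVert\cdot\rVert_X$, using dyadic shells in $\lVert\bmh{m}\rVert$ and extracting the $m_{n+1}$-dependence via \eqref{m_nloesung}. For the first statement this is essentially correct, with two small caveats. You omit the region $\lVert\bmh{m}\rVert<Y/X$ (where $J=1$); its contribution $\ll(Y/(Xl))^n$ is absorbed since $Y\le N$. And your pointwise inequality $J((\bmh{m},m_{n+1}),Y)\ll(|m_{n+1}|+1)^{-M}J((\bmh{m},0),Y)$ is too strong as written: when $\lVert\bmh{m}\rVert\ge X^{1/2}|m_{n+1}|$ one has $\lVert\bm{m}\rVert_X=\lVert\bmh{m}\rVert$ and the two $J$-values coincide, with no extra decay in $m_{n+1}$. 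The paper instead treats the ranges $Y/X<\lVert\bmh{m}\rVert\le N/X$ and $\lVert\bmh{m}\rVert>N/X$ separately, which gives the desired bound after summing over $\bmh{m}$.

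For the second statement the paper does not attempt a self-contained argument; it asserts that the weighted shell count
\[
\sum_{\substack{\lVert\bmh{m}\rVert\le V,\ \bmh{m}\equiv\bmh{b}(l)\\ F_1(\bmh{m})\neq0}}\sigma_{-1/4}(F_1(\bmh{m}),l)\ll (V/l)^n
\]
may replace the unweighted one and cites \cite[Lemma~25]{h9}. Your divisor-switching reconstruction of this step has a real gap. Granting your count $\ll V^n/(d^{1-\epsilon}l^n)+V^{n-1}/l^{n-1}$ and summing the second term against $d^{-1/4}$ over all $d\le cV^{\deg F_1}$ contributes
\[
(V/l)^{n-1}\sum_{d\le cV^{\deg F_1}}d^{-1/4}\asymp (V/l)^{n-1}V^{\frac{3}{4}\deg F_1},
\]
which swamps $(V/l)^n$ as soon as $\deg F_1\ge 2$. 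So the boundary term \emph{does} disturb the leading order, and ``checking'' will not suffice. Hooley's argument handles large $d$ differently: each $\bmh{m}$ with $F_1(\bmh{m})\neq 0$ contributes only divisors of a single fixed nonzero integer, and a Rankin-type truncation (or a Nair--Shiu estimate for multiplicative functions along polynomial values) controls that tail. Either reproduce that, or, as the paper does, cite \cite[Lemma~25]{h9} directly.
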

The proof of the first statement in the case $m_n=0$ is done again by dyadic dissection and using Lemma \ref{Integral}, noting that $\min(X,Y)=X$. For $m_n\neq 0$ we note that the case $\lVert\bm{m}\rVert_X\leq Y/X$ of \eqref{Jdef} does not happen and that further we have 
\begin{align*}
&\log^{n+1}(2X \lVert \bm{m} \rVert_X/Y)(Y/X\lVert \bm{m} \rVert_X)^{\frac{1}{2}n-\frac{1}{2}}(N/X\lVert \bm{m} \rVert_X )^M\\&\leq \log^{n+1}(2X \lVert \bmh{m} \rVert_X/Y)(Y/X\lVert \bmh{m} \rVert_X)^{\frac{1}{2}n-\frac{1}{2}}(\frac{1}{|m_{n+1}|+1} )^M
\end{align*}
in the range $Y/X< \lVert\bmh{m}\rVert \leq N/X$. Together with \eqref{m_nloesung} this completes the proof of the first statement. For the second statement 
\begin{align*}
\sum_{\substack{\lVert \bmh{m} \rVert \leq y\\ \bm{\hat{m}}\equiv \bm{\hat{b}}(l);F_1(\bmh{m})\neq 0}}\sigma_{-\frac{1}{4}}(F_1(\bmh{m}),l)
\end{align*}
takes the role of
\begin{align*}
\sum_{\substack{\lVert \bmh{m} \rVert \leq y \\\bm{\hat{m}}\equiv \bm{\hat{b}}(l)} }1.
\end{align*}
However, this does not change the argument, as shown in the proof of Lemma 25 of \cite{h9}.

\section{Proof of Theorem \getrefnumber{Thm1.1}}
\label{3sec5}
We are now in the position to begin the final estimation. Our goal is to show that $\Upsilon_1(X)$ dominates the other terms in \eqref{Updef}. For all $Y\leq A_1N$ the contribution of the sum over $\frac{1}{2}Y<k\leq Y$ to $\Upsilon_i(X)$ with $2\leq i\leq 5$ is by Lemma \ref{Integral} bounded by
\begin{align*}
&\sum_{\frac{1}{2}Y<k\leq Y}\sum_{\bm{m}}k^{-n-1}Q(\bm{m},k)I(\bm{m},k)\\
\ll&  Y^{-n-1}\sum_{\frac{1}{2}Y<k\leq Y}\sum_{\bm{m}}|Q(\bm{m},k)|J(\bm{m},Y)=P_i(Y),
\end{align*} 
say. Here the conditions for the sum over $\bm{m}$ are defined by the conditions on the analogue sum of $\Upsilon_i(X)$. Again with the sum over $\bm{m}$ having the appropriate condition given by \eqref{UPdef} we have then
\begin{align*}
\Upsilon_i(X)=\sum_{k\leq A_1N}\sum_{\bm{m}}k^{-n-1}Q(\bm{m},k)I(\bm{m},k)\ll\sum_{0\leq j\leq N_1}P_i(Y_j)
\end{align*}
with $N_1=[\log A_1N/ \log 2]$. We now consider seperately the different cases and proceed bounding $P_i(Y)$ in the next subsections. They are closely related in structure and argument to Hooley's approach as found in sections 15 to 18 of \cite{h9} and section 48 of \cite{h93}. 

We start our estimation by consider $\Upsilon_2$ and $\Upsilon_3$ in the following two subsections. For $\Upsilon_2$ the summation ranges over those $\bm{m}$ for which $m_{n+1}=0$, but $F(\bm{m})\neq 0$. In $\Upsilon_2$ the condition is $m_{n+1}\neq 0$. In both cases, for most $p$, the strongest estimates for $Q(\bm{m},p)$ and $Q(\bm{m},p^2$ provided by lemma \ref{primecase} and \ref{primesquare} are applicable for most $p$. However, this alone is not sufficient and it is also here where the estimates for $D$ and $E$ of both the simple kind (see lemma \ref{DundE}) and the elaborate kind (see lemmas \ref{KfurE} and \ref{KfurE}) play crucial role. Naturally, $D$ appears in the estimation of $\Upsilon_2$ and $E$ in the one of $\Upsilon_3$. It are these cases and the use of lemma \ref{KfurE} and \ref{KfurE} that prevents a saving better than $(\log X)^{-\delta}$ in Theorem \ref{goal}.

Afterwards, in subsections \ref{secup4} and \ref{secup5} we consider respectively $\Upsilon_4$ and $\Upsilon_5$. We are then in the bad cases respectively either $Q(\bm{m},p)$ and $Q(\bm{m},p^2)$ or only $Q(\bm{m},p^2)$. However, these worse estimates are compensated by the strength of Lemma \ref{JsumD0} that is based on Lemma \ref{6.2}. In these both cases $E$ and $D$ play no role and we obtain a power saving in a slightly technical but straight forward manner.

\subsection{$\Upsilon_2$}
We start with the estimation of $P_2(Y)$ and recall
\begin{align*}
P_2(Y)=Y^{-n-1}\sum_{\substack{\frac{1}{2}Y<k\leq Y \\  F_1(\bmh{m})\neq 0}}|Q((\bmh{m},0),k)|J((\bmh{m},0),Y).
\end{align*}
We now bound $P_2(Y)$ for $Y>X^{5/4}$. We introduce for some $A_5>0$ the auxiliary variable 
\begin{align*}
Z=X^{\frac{1}{16}A_{5} \log \log X}
\end{align*}
and for $k=k_1l_2$ define
\begin{align*}
k_1^*&=\prod_{\substack{p|k_1\\p\leq Z}}p\\
k_1^\dag&=\prod_{\substack{p|k_1\\p> Z}}p,
\end{align*}
so that $k_1^*k_1^\dag=k_1$. Then, as $k_1^*> X^{\frac{1}{16}}$ implies $\omega(k_1)\geq \omega(k_1^*)>A_{5}\log \log X$, we can rewrite $P_2(Y)$ as
\begin{align*}
P_2(Y)\leq Y^{-n-1}\bigl(\sum_{k_1^*,l_2\leq X^{\frac{1}{16}}}+\sum_{\substack{l_2\leq X^{1/16}\\\omega(k_1)>A_{5}\log \log X}}+\sum_{l_2\geq X^{\frac{1}{16}}}\bigr)={\sum}_1+{\sum}_2+{\sum}_3,
\end{align*}
say. 

We start by using the pseudomultiplicativity of $Q$ given by \eqref{pseudomult} to split up $\sum_1$ as follows. We write $k=k_1^*k_1^\dagger l_2$ and have
\begin{align*}
{\sum}_1=&Y^{-n-1}\sum_{\substack{k_1^*,l_2\leq X^{\frac{1}{16}}\\Y/2k_1^*l_2<k_1^\dagger\leq Y/k_1^*l_2}}\sum_{F_1(\bmh{m})\neq 0}|Q((\bmh{m},0),k)|J((\bmh{m},0),Y)\\
\ll& Y^{-n-1}\sum_{\substack{k_1^*l_2\leq X^{\frac{1}{8}}\\\substack{F_1(\bmh{m})\neq 0\\Y/2k_1^*l_2<k_1^\dagger\leq Y/k_1^*l_2}}}J((\bmh{m},0),Y)|Q(\overline{k_1^*l_2}(\bmh{m},0),k_1^\dagger)||Q(\overline{k_1^\dagger}(\bmh{m},0),k_1^*l_2)|.
\end{align*}
This gives us by applying Lemma \ref{primecase} and \eqref{betragmult} respectively on the two appearances of $Q$
\begin{align*}
\ll& Y^{-n-1}\sum_{\substack{k_1^*l_2\leq X^{\frac{1}{8}}\\ \substack{F_1(\bmh{m})\neq 0\\Y/2k_1^*l_2<k_1^\dagger\leq Y/k_1^*l_2}}} J((\bmh{m},0),Y)|Q((\bmh{m},0),k_1^*l_2)|A_{2}^{\omega(k_1^\dagger)}(k_1^\dagger,F_1(\bmh{m}))^{\frac{1}{2}}{k_1^\dagger}^{\frac{1}{2}n+1}\\
\ll& Y^{-\frac{1}{2}n}\sum_{\substack{k_1^*l_2\leq X^{\frac{1}{8}}\\ \substack{ F_1(\bmh{m})\neq 0\\ k_1^\dagger\leq Y/k_1^*l_2}}}\frac{1}{(k_1^*l_2)^{\frac{1}{2}n+1}}J((\bmh{m},0),Y)|Q((\bmh{m},0),k_1^*l_2)|A_{2}^{\omega(k_1^\dagger)}(k_1^\dagger,F_1(\bmh{m}))^{\frac{1}{2}}.
\end{align*}
We use the second part of the auxiliary Lemma \ref{HoL23} with $l=k_1^*l_2$, getting
\begin{align}\label{FEeq1}
 \ll& \frac{(\log \log X)^{A_{4}}}{Y^{\frac{1}{2}n-1}\log X}\sum_{\substack{k_1^*l_2\leq X^{\frac{1}{8}}\\F_1(\bmh{m})\neq 0}}\frac{1}{(k_1^*l_2)^{\frac{1}{2}n+2}}J((\bmh{m},0),Y)|Q((\bmh{m},0),k_1^*l_2)|\sigma_{-\frac{1}{4}}(F_1(\bmh{m}),k_1^*l_2).
\end{align}

We now sort the values of $\bmh{m}$ in the innermost sum into congruence classes modulo $k_1^*l_2$. 
\begin{align*}
&\sum_{F_1(\bmh{m})\neq 0}J((\bmh{m},0),Y)|Q((\bmh{m},0),k_1^*l_2)|\sigma_{-\frac{1}{4}}(F_1(\bmh{m}),k_1^*l_2)\\&=\sum_{0<\bmh{b}<k_1^*l_2}|Q((\bmh{b},0),k_1^*l_2)|\sum_{\substack{F_1(\bmh{m})\neq 0\\ \bmh{m}\equiv \bmh{b} (k_1^*l_2)}}J((\bmh{m},0),Y)\sigma_{-\frac{1}{4}}(F_1(\bmh{m}),k_1^*l_2).
\end{align*}
Since we are considering currently $Y>X^{5/4}$ we can apply the second part of Lemma \ref{Jkongruentsumme} to estimate
\begin{align*}
\sum_{0<\bmh{b}<k_1^*l_2}|Q((\bmh{b},0),k_1^*l_2)|\sum_{\substack{F_1(\bmh{m})\neq 0\\ \bmh{m}\equiv \bmh{b} (k_1^*l_2)}}J((\bmh{m},0),Y)\sigma_{-\frac{1}{4}}(F_1(\bmh{m}),k_1^*l_2)\\
\ll \frac{Y^{\frac{1}{2}n-\frac{1}{2}}N^{\frac{1}{2}n+\frac{1}{2}}}{X^{n}}\log^{n}\Bigl(\frac{N}{Y}\Bigr) \frac{D(k_1^*l_2,0)}{(k_1^*l_2)^{n}}.
\end{align*}
Going back to \eqref{FEeq1} and relaxing $k_1^*l_2$ to $l$ we obtain the estimate
\begin{align*}
{\sum}_1\ll \frac{Y^{\frac{1}{2}}(\log \log X)^{A_{4}}N^{\frac{1}{2}n+\frac{1}{2}}}{X^{n}\log X}\log^{n+1}\Bigl(\frac{N}{Y}\Bigr)\sum_{l\leq X}\frac{D(l,0)}{l^{\frac{3}{2}n+2}}.
\end{align*}
Now, by using Lemma \ref{DundE}, \ref{KfurD}, and a Mertens formula on the sum over $l$, we get for some $\delta>0$ depending on $C_1$ and $\mathcal{P}_D$ 
\begin{align*}
\sum_{l\leq X}\frac{D(l,0)}{l^{\frac{3}{2}n+\frac{1}{2}}}\ll (\log X)^{1-2\delta}.
\end{align*} 
This implies
\begin{align}\label{sum1}
{\sum}_1 \ll \frac{Y^{\frac{1}{2}}N^{\frac{1}{2}n+\frac{1}{2}}}{X^{n}(\log X)^\delta}\log^{n+1}\Bigl(\frac{N}{Y}\Bigr).
\end{align}

For ${\sum}_2$ we start similarly, now with $l_2$ and $k_1$ taking the roles of $k_1^*l_2$ and $k_1^\dag$. The same steps as for ${\sum}_1$ then give us

\begin{align*}
&{\sum}_2 \ll \\ \ll& Y^{-\frac{1}{2}n}\sum_{\substack{l_2\leq X^{\frac{1}{16}}\\F_1(\bmh{m})\neq 0}}\frac{J((\bmh{m},0),Y)|Q((\bmh{m},0),l_2)|}{l_2^{\frac{1}{2}n+1}}\!\!\sum_{\substack{k_1\leq Y/l_2\\ \omega(k_1)> A_{5}\log \log X}}A_{2}^{\omega(k_1)}(k_1,F_1(\bmh{m})^{\frac{1}{2}}).
\end{align*}
For $A_{5}$ sufficiently large we use Lemma \ref{HoL22} and bound the innermost sum by
\begin{align*}
e^{-A_{5}\log \log X}\sum_{k_1\leq Y/l_2}(A_{2}e)^{\omega(k_1)}(k_1,F_1(\bmh{m}))^{\frac{1}{2}}&\ll \frac{Y}{l_2}\log ^{A_{2}e-1-A_{5}}(X)\sigma_{-\frac{1}{4}}(F_1(\bmh{m}),l_2)\\
&\ll \frac{Y\sigma_{-\frac{1}{4}}(F_1(\bmh{m}),l_2)}{l_2 \log X}.
\end{align*}
Using this and then applying the same steps as for ${\sum}_1$ gives us

\begin{align*}
{\sum}_2\ll \frac{Y^{\frac{1}{2}}N^{\frac{1}{2}n+\frac{1}{2}}}{X^{n}\log X}\log^{n+1}\Bigl(\frac{N}{Y} \Bigr)\sum_{l_2\leq X}\frac{D(l_2,0)}{l_2^{\frac{3}{2}n+2}}.
\end{align*}
By Lemma \ref{DundE} and using that the sum ranges only over square full numbers, we have
\begin{align*}
\sum_{l_2\leq X}\frac{D(l_2,0)}{l_2^{\frac{3}{2}n+2}}\ll 1,
\end{align*}
hence we estimate
\begin{align}\label{sum2}
{\sum}_2\ll \frac{Y^{\frac{1}{2}}N^{\frac{1}{2}n+\frac{1}{2}}}{X^{n}\log X}\log^{n+1}\Bigl(\frac{N}{Y} \Bigr).
\end{align}
The important saving of $\log X$ now stems from the summation condition on $\omega(k_1)$.

The following transformation for ${\sum}_3$ is also the starting point to handle those $P_2(Y)$ for which $Y\leq X^{5/4}$ as well well as for $P_3(Y)$ and $P_4(Y)$ for all $Y$. We write $k=k_1k_2l_3$ and by using the multiplicativity of $|Q|$ have

\begin{align}\label{Ho139}
&Y^{-n-1}\sum_{\substack{l_3\\\bmh{m}}}J((\bmh{m},0),Y)\sum_{k_1,k_2}|Q((\bmh{m},0),k_1k_2l_3)| \\ \nonumber
=&Y^{-n-1 }\sum_{\substack{l_3\\\bmh{m}}}|Q((\bmh{m},0),l_3)|J((\bmh{m},0),Y)\sum_{k_1,k_2}|Q((\bmh{m},0),k_1k_2)|,
\end{align}
where the restrictions on the sums are given by the $P_i(Y)$ they appear in.
In ${\sum}_3$ and for $P_2(Y)$ with $Y<X^{5/4}$ the condition is $F_1(\bmh{m})\neq 0$ and Lemma \ref{primecase} and \ref{primesquare} give us terms of the form

\begin{align*}
Y^{-n-1} \sum_{\substack{l_3\\F_1(\bmh{m})\neq 0}}|Q((\bmh{m},0),l_3)|J((\bmh{m},0),Y)\sum_{k_1,k_2}A_{2}^{\omega(k_1k_2)}(k_1k_2)^{\frac{1}{2}n+1}(F_1(\bmh{m})^2,k_1k_2))^{\frac{1}{2}}.
\end{align*}
For ${\sum}_3$ the appearing conditions on $k$ are $\frac{1}{2}Y<k_1k_2l_3\leq Y$ and $l_2=k_2l_3\geq X^{1/16}$. This can be relaxed to the disjunction of

\begin{align*}
l_3<X^{1/16}\text{, } X^{1/16}/l_3\leq k_2\leq Y/l_3 \text{, } k_1\leq Y/(k_2l_3)\\
X^{1/16}\leq l_3\leq Y\text{ , }k_1k_2\leq Y/l_3.
\end{align*}
In the first case the sum over $k_1k_2$ becomes
\begin{align*}
&O\Bigl(X^\epsilon\sum_{X^{1/16}/l_3\leq k_2\leq Y/l_3}k_2^{\frac{1}{2}n+1}((F_1(\bmh{m}))^2,k_2)^{\frac{1}{2}}\sum_{k_1\leq Y/(k_2l_3)}k_1^{\frac{1}{2}n+1}(F_1(\bmh{m}),k_1)^{\frac{1}{2}}\Bigr)\\
=&O\Bigl(\frac{||\bmh{m}||^\epsilon X^\epsilon Y^{\frac{1}{2}n+2}}{X^{1/32}l_3^{\frac{1}{2}n+\frac{3}{2}}} \Bigr),
\end{align*}
by using Hooleys calculations leading to \cite[(143)]{h9}. In the second it is
\begin{align*}
O\Bigl(X^\epsilon\sum_{l\leq Y/l_3}l^{\frac{1}{2}n+1}((F_1(\bmh{m}))^2,l)^{\frac{1}{2}} \Bigr)=O\Bigl(\frac{||\bmh{m}||^\epsilon X^\epsilon Y^{\frac{1}{2}n+2}}{l_3^{\frac{1}{2}n+2}} \Bigr).
\end{align*}
In the first case the contribution to ${\sum}_3$ is therefore, by using Lemma \ref{Jsum} with $m_n=0$ and noting that $\min(X,Y)=X$, as still $Y>X^{\frac{5}{4}}$,

\begin{align*}
&\frac{X^\epsilon}{X^{1/32}Y^{\frac{1}{2}n-1}}\sum_{l_3<X^{1/16}}\frac{1}{l_3^{\frac{1}{2}n+\frac{3}{2}}}\sum_{\bmh{m}\neq 0}||\bmh{m}||^\epsilon|Q((\bmh{m},0),l_3)|J((\bmh{m},0),Y)\\
&\ll \frac{X^\epsilon}{X^{1/32}Y^{\frac{1}{2}n-1}}\Bigl(\sum_{l_3<X^{1/16}}l_3^{\frac{1}{3}n}+\frac{Y^{\frac{1}{2}n-\frac{1}{2}}N^{\frac{1}{2}n+\frac{1}{2}}}{X^{n}}\Bigr)\\
&\ll \frac{X^{\frac{1}{48}n-\frac{1}{32}+\frac{1}{48}+\epsilon}}{Y^{\frac{1}{2}n-1}}+\frac{Y^{\frac{1}{2}}N^{\frac{1}{2}n+\frac{1}{2}}}{X^{n+\frac{1}{32}-\epsilon}}.
\end{align*}
In the latter case it is, again by applying Lemma \ref{Jsum},

\begin{align*}
&\frac{X^\epsilon}{Y^{\frac{1}{2}n-1}}\sum_{X^{1/16}\leq l_3 \leq Y}\frac{1}{l_3^{\frac{1}{2}n+2}}\sum_{\bmh{m}\neq 0}||\bmh{m}||^\epsilon|Q((\bmh{m},0),l_3)|J((\bmh{m},0),Y)\\
&\ll \frac{X^\epsilon}{Y^{\frac{1}{2}n-1}}\Bigl(\sum_{X^{1/16}\leq l_3 \leq Y}l_3^{\frac{1}{3}n+\frac{1}{2}}+\frac{Y^{\frac{1}{2}n-\frac{1}{2}}N^{\frac{1}{2}n+\frac{1}{2}}}{X^{n}l_3^{\frac{1}{2}}}\Bigr)\\
&\ll \frac{X^\epsilon}{Y^{\frac{1}{6}n-\frac{5}{6}}}+\frac{Y^{\frac{1}{2}}N^{\frac{1}{2}n+\frac{1}{2}}}{X^{n+\frac{1}{32}-\epsilon}}.
\end{align*}
Therefore we have
\begin{align}\label{sum3}
{\sum}_3\ll \frac{X^\epsilon}{Y^{\frac{1}{6}n-\frac{5}{6}}}+\frac{Y^{\frac{1}{2}}N^{\frac{1}{2}n+\frac{1}{2}}}{X^{n+\frac{1}{32}-\epsilon}}.
\end{align}
For $Y\leq X^{5/4}$ we lose the condition $X^{1/16}\leq l_3$ and get by similar calculations

\begin{align}\label{sum4}
P_2(Y)\ll \frac{X^\epsilon \min (X,Y)^{\frac{1}{2}n-\frac{1}{2}}}{X^{\frac{1}{2}n-\frac{1}{2}}Y^{\frac{1}{6}n-\frac{5}{6}}}+\frac{Y^{\frac{1}{2}}N^{\frac{1}{2}n+\frac{1}{2}}}{X^{n-\epsilon}}.
\end{align}
The contribution of to $\Upsilon_2(X)$ of $Y_j$ with $Y_j> X^{5/4}$ is, by putting together the bounds \eqref{sum1}, \eqref{sum2} and \eqref{sum3},
\begin{align*}
&O\Bigl(\frac{N^{\frac{1}{2}n+\frac{1}{2}}}{X^{n}(\log X)^\delta}\sum_{X^{5/4}<Y_j\leq A_1N}Y_j^{\frac{1}{2}}\log^{n+1} \frac{N}{Y_j} \Bigr)+O\Bigl(X^\epsilon \sum_{X^{5/4}<Y_j\leq A_1N} \frac{1}{Y_j^{\frac{1}{6}n-\frac{5}{6}}} \Bigr)\\&+O\Bigl(\frac{N^{\frac{1}{2}n}}{X^{n+\frac{1}{32}-\epsilon}}\sum_{X^{5/4}<Y_j\leq A_1N}Y_j^{\frac{1}{2}}\Bigr)
\end{align*}
Giving $n$ its intended value $6$ we get 
\begin{align*}
O\Bigl(\frac{X^{-\frac{1}{4}n+\frac{3}{2}}}{\log^\delta X}\Bigr)+O\Bigl(X^{-\frac{5}{24}(n-5)+\epsilon} \Bigr)+\Bigl(X^{-\frac{1}{4}n+\frac{3}{2}-\frac{1}{32}+\epsilon} \Bigr)\\
=O\Bigl(\frac{1}{\log^\delta X} \Bigr).
\end{align*}

For $Y_j\leq X^{5/4}$ we have by \eqref{sum4}
\begin{align*}
O\Bigl(X^\epsilon\sum_{X<Y_j\leq X^{5/4}}\frac{1}{Y_j^{\frac{1}{6}n-\frac{5}{6}}}\Bigr)+O\Bigl(\frac{X^\epsilon}{X^{\frac{1}{2}n-\frac{1}{2}}}\sum_{Y_j\leq X}Y_j^{\frac{1}{3}n+\frac{1}{3}} \Bigr)+O\Bigl(\frac{N^{\frac{1}{2}n+\frac{1}{2}}}{X^{n-\epsilon}}\sum_{Y_j\leq X^{5/4}}Y_j^{\frac{1}{2}} \Bigr),
\end{align*}
which is
\begin{align*}
&O\Bigl(X^{-\frac{1}{6}(n-5)+\epsilon} \Bigr)+O\Bigl(X^{-\frac{1}{6}(n-5)+\epsilon} \Bigr)+O\Bigl(X^{-\frac{1}{4}n+\frac{3}{2}-\frac{1}{8}+\epsilon} \Bigr)\\
&=O\Bigl(X^{-\frac{1}{8}+\epsilon} \Bigr).
\end{align*}
Overall we proved for $n\geq 6$ the existence of some $\delta>0$, such that
\begin{align*}
\Upsilon_2(X)\ll \frac{1}{\log^\delta X}.
\end{align*}

\subsection{$\Upsilon_3$}

To estimate $\Upsilon_3(X)$ we recall
\begin{align*}
P_3(Y)&=\sum_{m_{n+1}\neq 0}Y^{-n-1}\sum_{\substack{\frac{1}{2}Y<k\leq Y \\ \bmh{m}\neq 0}}|Q(\bm{m},k)|J(\bm{m},Y)\\
&=\sum_{m_{n+1}\neq 0}P_3(Y,m_{n+1}),
\end{align*}
say. We estimate $P_3(Y,m_{n+1})$ in a similar fashion as we did $P_2(Y)$. The condition $m_{n+1}$ takes the role of $F_1(\bmh{m})\neq 0$. This has some technical consequences that, however, do not fundamentally change the steps. We again start with the case $Y>X^{5/4}$ and as for $P_2(Y)$ dissect the sum over $k$ into 
\begin{align*}
P_3(Y,m_{n+1})\leq& Y^{-n-1}\bigl(\sum_{k_1^*,l_2\leq X^{\frac{1}{16}}}+\sum_{\substack{l_2\leq X^{1/16}\\\omega(k_1)>A_{5}\log \log X}}+\sum_{l_2\geq X^{\frac{1}{16}}}\bigr)\\
=&{\sum}_{4,m_{n+1}}+{\sum}_{5,m_{n+1}}+{\sum}_{6,m_{n+1}},
\end{align*}
say. 

Starting with the first of those, we follow the path of ${\sum}_1$ and get by using \eqref{pseudomult} and sorting the sums over $\bmh{m}$ and over $k_1^\dagger$ into progressions modulo $k_1^*l_2$
\begin{align*}
&{\sum}_{4,m_{n+1}}\leq \\
\leq& Y^{-n-1}\sum_{k_1^*l_2\leq X^{\frac{1}{8}}} \sum_{\bmh{m}}J(\bm{m},Y)\sum_{Y/2k_1^*l_2<k_1^\dagger\leq Y/k_1^*l_2}|Q(\overline{k_1^\dagger}\bm{m},k_1^*l_2)||Q(\overline{k_1^*l_2}\bm{m},k_1^\dagger)|\\
\leq& Y^{-n-1} \sum_{k_1^*l_2\leq X^{\frac{1}{8}}}\sum_{\substack{0\leq \bmh{b}<k_1^*l_2\\0\leq c<k_1^*l_2;(c,k_1^*l_2)=1}}\sum_{\bmh{m}\equiv \bmh{b} (k_1^*l_2)}J(\bm{m},Y)\\
\times&\sum_{\substack{k_1^\dagger\leq Y/k_1^*l_2\\k_1^\dagger\equiv c (k_1^*l_2)}}|Q(\overline{k_1^\dagger}\bm{m},k_1^*l_2)||Q(\overline{k_1^*l_2}\bm{m},k_1^\dagger)|.
\end{align*}
By Lemma \ref{primecase} we have
\begin{align*}
{\sum}_{4,m_{n+1}}\ll& Y^{-n-1}\sum_{k_1^*l_2\leq X^{\frac{1}{8}}}\sum_{\substack{0\leq \bmh{b}<k_1^*l_2\\0\leq c<k_1^*l_2;(c,k_1^*l_2)=1}}|Q((\bmh{b},\overline{c}m_{n+1}),k_1^*l_2)|\sum_{\bmh{m}\equiv \bmh{b} (k_1^*l_2)} J(\bm{m},Y)\\
\times& \sum_{\substack{k_1^\dagger\leq Y/k_1^*l_2\\k_1^\dagger\equiv c (k_1^*l_2)}}A_{2}^{\omega(k_1^{\dagger})}{k_1^{\dagger}}^{\frac{1}{2}(n+2)}(m_{n+1},k_1^{\dagger})^{\frac{1}{2}}\Bigr).
\end{align*}
The innermost sum becomes independent of $\bmh{m}$ and we apply \ref{HoL23} with $l=k_1^*l_2$ to it and the first part of Lemma \ref{Jkongruentsumme} to the sum over $\bmh{m}$. Similar to \cite[(156)]{h9} this gives us for any fixed $M>0$
\begin{align*}
&{\sum}_{4,m_{n+1}}\ll \\ \ll& Y^{-n-1}\sum_{k_1^*l_2\leq X^{1/8}}\frac{Y^{\frac{1}{2}n-\frac{1}{2}}N^{\frac{1}{2}n+\frac{1}{2}}}{X^{n}(k_1^*l_2)^{n}}\log^{n+1}\Bigl(\frac{N}{Y}\Bigr)\Bigl(\frac{1}{|m_{n+1}|+1}\Bigr)^M\\
&\times\frac{Y^{\frac{1}{2}n+2}(\log \log X)^{A_{4}}\sigma_{-\frac{1}{4}}(m_{n+1})}{\phi(k_1^*l_2)(k_1^*l_2)^{\frac{1}{2}n+2}\log X}\sum_{\substack{0\leq \bmh{b}<k_1^*l_2\\0\leq c<k_1^*l_2;(c,k_1^*l_2)=1}}|Q((\bmh{b},\overline{c}m_{n+1}),k_1^*l_2)|\\
\ll& \frac{\sigma_{-\frac{1}{4}}(m_{n+1})Y^{\frac{1}{2}}N^{\frac{1}{2}n+\frac{1}{2}}(\log \log X)^{A_{4}}}{X^{n} \log X}\log^{n+1}\Bigl(\frac{N}{Y}\Bigr)\Bigl(\frac{1}{|m_{n+1}|+1}\Bigr)^M\sum_{l\leq X}\frac{E(l,m_{n+1})}{l^{\frac{3}{2}(n+2)}}.
\end{align*}
We are now in the same situation as before and can apply our bound for $E(w,1)$ in Lemma \ref{KfurE}. A basic calculation using the Euler product and a Mertens formula gives us now
\begin{align*}
\sum_{l\leq X}\frac{E(l,m_{n+1})}{l^{\frac{3}{2}(n+2)}}\leq \prod_{p\leq X}\Bigl(1+\sum_{\alpha\geq 1}\frac{E(p^\alpha,m_n)}{p^{\frac{3}{2}(n+2)\alpha}} \Bigr)=O(\sigma_{-1}(m_{n+1})\log^{1-\delta}X).
\end{align*}
So overall for ${\sum}_{4,m_{n+1}}$ we get 
\begin{align*}
{\sum}_{4,m_{n+1}}=O\Bigl(\frac{\sigma_{-\frac{1}{8}}(m_{n+1})Y^{\frac{1}{2}}N^{\frac{1}{2}n+\frac{1}{2}}}{X^{n}\log^\delta X}\Bigl(\frac{1}{1+|m_{n+1}|}\Bigr)^M\log^{n+1}\Bigl(\frac{N}{Y} \Bigr) \Bigr).
\end{align*}

For ${\sum}_{5,m_{n+1}}$ we start with the same transformation as for ${\sum}_{4,m_{n+1}}$. We have
\begin{align*}
&{\sum}_{5,m_{n+1}} \ll \\
\ll &Y^{-n-1}\sum_{l_2\leq X^{\frac{1}{16}}}\sum_{\substack{0\leq \bmh{b}<l_2\\0\leq c<l_2;(c,l_2)=1}}|Q((\bmh{b},\overline{c}m_{n+1}),l_2)|\sum_{\bmh{m}\equiv \bmh{b} (l_2)} J(\bm{m},Y)\\
&\times \sum_{\substack{k_1\leq Y/l_2\\ \substack{k_1\equiv c (l_2)\\ \omega(k_1)>A_{5}\log\log X}}}A_{2}^{\omega(k_1)}{k_1}^{\frac{1}{2}(n+2)}(m_{n+1},k_1)^{\frac{1}{2}}\\
\ll&Y^{-n-1}\sum_{l_2\leq X^{\frac{1}{16}}}\frac{Y^{\frac{1}{2}n-\frac{1}{2}}N^{\frac{1}{2}n+\frac{1}{2}}}{X^{n}l_2^{n}}\Bigl(\frac{1}{|m_{n+1}|+1}\Bigr)^M\log^{n+1}\Bigl(\frac{N}{Y}\Bigr)  \\
&\times \sum_{\substack{0\leq \bmh{b}<l_2\\0\leq c<l_2;(c,l_2)=1}}|Q((\bmh{b},\overline{c}m_{n+1}),l_2)| \sum_{\substack{k_1\leq Y/l_2\\ \substack{k_1\equiv c (l_2)\\ \omega(k_1)>A_{5}\log\log X}}}A_{2}^{\omega(k_1)}{k_1}^{\frac{1}{2}(n+2)}(m_{n+1},k_1)^{\frac{1}{2}}.
\end{align*}
By Hooley's argument leading to \cite[(159)]{h9} the innermost sum is 
\begin{align*}
O\Bigl(\frac{\sigma_{-\frac{1}{8}}(m_{n+1})Y^{\frac{1}{2}n+\frac{3}{2}}}{{l_2}^{\frac{1}{2}n+3} \log X}\Bigr)
\end{align*}
and so we get 
\begin{align*}
{\sum}_{5,m_{n+1}}&\ll \frac{\sigma_{-1/8}(m_{n+1})Y^{\frac{1}{2}}N^{\frac{1}{2}n+\frac{1}{2}}}{X^{n}\log X}\Bigl(\frac{1}{|m_{n+1}|+1}\Bigr)^M\log^{n+1}\Bigl(\frac{N}{Y} \Bigr) \sum_{l_2\leq X}\frac{E(l_2,m_{n+1})}{l^{\frac{3}{2}n+3}} \\
&\ll \frac{\sigma_{-1/8}(m_{n+1})Y^{\frac{1}{2}}N^{\frac{1}{2}n+\frac{1}{2}}}{X^{n}\log X}\Bigl(\frac{1}{|m_{n+1}|+1}\Bigr)^M\log^{n+1}\Bigl(\frac{N}{Y} \Bigr) .
\end{align*}

Turning our attention to ${\sum}_{6,m_{n+1}}$ we now use Lemma \ref{primesquare} instead of Lemma \ref{primecase}. Afterwards applying Lemma \ref{Jsum} still with the condition $Y\geq X^{5/4}$ in mind we have
\begin{align*}
{\sum}_{6,m_{n+1}}=&Y^{-n-1}\sum_{\substack{\frac{1}{2}Y<k_1k_2l_3\leq Y\\k_2l_3\geq X^{\frac{1}{16}}}}\sum_{\bmh{m}}|Q(\overline{l_3}\bm{m},k_1k_2)||Q(\overline{k_1k_2}\bm{m},l_3)|J(\bm{m},Y)\\
\ll& X^{\epsilon}Y^{-n-1}\sum_{\substack{k_1k_2l_3\leq Y\\k_2l_3\geq X^{\frac{1}{16}}}}(m_n^2,k_1k_2)^{\frac{1}{2}}(k_1k_2)^{\frac{1}{2}n+1}\sum_{\bmh{m}}|Q(\overline{k_1k_2}\bm{m},l_3)|J(\bm{m},Y) \\
\ll& \Bigl(\frac{1}{1+|m_{n+1}|}\Bigr)^MX^\epsilon Y^{-n-1}\sum_{k_1k_2l_3\leq Y}(m_n^2,k_1k_2)^{\frac{1}{2}}(k_1k_2)^{\frac{1}{2}n+1}l_3^{\frac{5}{6}n+\frac{3}{2}} \\
&+\Bigl(\frac{1}{1+|m_{n+1}|}\Bigr)^M\frac{X^\epsilon N^{\frac{1}{2}n+\frac{1}{2}}}{Y^{\frac{1}{2}n+\frac{3}{2} }X^{n}}\sum_{\substack{k_1k_2l_3\leq Y\\k_2l_3\geq X^{\frac{1}{16}}}}(m_{n+1}^2,k_1k_2)^{\frac{1}{2}}(k_1k_2)^{\frac{1}{2}n+1}l_3^{\frac{1}{2}n+\frac{3}{2}}.
\end{align*}
The remaining sums are variants of objects Hooley dealt with and following his calculations around \cite[(161)]{h9} we get for the first sum
\begin{align*}
\sum_{k_2l_3\leq Y}(m_{n+1}^2,k_2)^{\frac{1}{2}}&k_2^{\frac{1}{2}n+1}l_3^{\frac{5}{6}n+\frac{3}{2}}\sum_{k_1\leq Y/k_2l_3}(m_{n+1},k_1)^{\frac{1}{2}}k_1^{\frac{1}{2}n+1}\\
&=O\Bigl(Y^{\frac{1}{2}n+2}\sigma_{-\frac{1}{2}}(m_{n+1})\sum_{k_2l_3\leq Y}\frac{(r^2,k_2)^{\frac{1}{2}}l_3^{\frac{1}{3}n-\frac{1}{2}}}{k_2} \Bigr)\\
&=O\Bigl(Y^{\frac{1}{2}n+2}\sigma_{-\frac{1}{2}}(m_{n+1})\sum_{k_2}\frac{(r^2,k_2)^{\frac{1}{2}}}{k_2}\sum_{l_3\leq Y}l_3^{\frac{1}{3}n-\frac{1}{2}} \Bigr)\\
&=O\Bigl(Y^{\frac{5}{6}n+\frac{11}{6}}\sigma_{-\frac{1}{2}}(m_{n+1})\sigma_{-1}(m_{n+1}) \Bigr).
\end{align*}
The second sum can be similarly bounded by 
\begin{align*}
Y^{\frac{1}{2}n+2}X^{-\frac{1}{96}}\sigma_{-\frac{1}{2}}(m_{n+1})\sigma_{-\frac{2}{3}}(m_{n+1}).
\end{align*}
The overall contribution of ${\sum}_{6,m_{n+1}}$ is consequently
\begin{align*}
{\sum}_{6,m_{n+1}}=&O\Bigl(\Bigl(\frac{1}{1+|m_{n+1}|}\Bigr)^M\sigma_{-\frac{1}{8}}(m_{n+1})X^\epsilon Y^{-\frac{1}{6}n+\frac{5}{6}} \Bigr)\\
&+O\Bigl(\Bigl(\frac{1}{1+|m_{n+1}|}\Bigr)^M \sigma_{-\frac{1}{8}}(m_{n+1}) X^{-n-\frac{1}{96}+\epsilon}Y^{\frac{1}{2}}N^{\frac{1}{2}n+\frac{1}{2}}\Bigr).
\end{align*}

The estimates for ${\sum}_{4,m_{n+1}}$, ${\sum}_{5,m_{n+1}}$, and ${\sum}_{6,m_{n+1}}$ are, except of the terms depending on $m_{n+1}$, respectively the same as the ones for $\sum_{1}$, $\sum_{2}$, and $\sum_{3}$. For $Y\geq X^{5/4}$ their contribution hence is

\begin{align*}
\sum_{X^{5/4}\leq Y_j \leq A_1N}P_3(Y,m_{n+1})\ll \Bigl(\frac{1}{1+|m_{n+1}|}\Bigr)^M \sigma_{-\frac{1}{8}}(m_{n+1}) (\log X)^{-\delta}.
\end{align*}
The contribution of $Y<X^{5/4}$ can be calculated by changing the above estimates the same way as we did in the case of $P_2(Y)$. Overall we get by choosing $M=3$

\begin{align*}
\Upsilon_3(X)&\ll\sum_{m_{n+1}\neq 0} \Bigl(\frac{1}{1+|m_{n+1}|} \Bigr)^3\sigma_{-\frac{1}{8}}(m_{n+1})(\log X)^{-\delta}\\
&\ll (\log X)^{-\delta} \sum_{m_{n+1}\neq 0}|m_{n+1}|^{-2}\\
&\ll (\log X)^{-\delta}.
\end{align*}

\subsection{$\Upsilon_4$}\label{secup4}
To estimate $P_4(Y)$ we start with the transformation \eqref{Ho139}. As now $$F_1(\bmh{m})=F_2(\bmh{m})=m_{n+1}=0,$$ we can no longer apply the better cases of Lemma \ref{primecase} and \ref{primesquare}. With a later determined choice of $1\leq \xi\leq A_1 X^{\frac{3}{2}}$ these now give

\begin{align*}
&P_4(Y)\ll \\
\ll& X^\epsilon Y^{-n-1}\sum_{l_3\leq Y}\sum_{\substack{\bmh{m}\neq 0\\ F_1(\bmh{m})=F_2(\bmh{m})=0}}|Q((\bmh{m},0),l_3)|J((\bmh{m},0),Y)\sum_{k_1 k_2\leq Y/l_3}(k_1k_2)^{\frac{1}{2}n+\frac{3}{2}}\\
\ll& X^\epsilon Y^{-\frac{1}{2}n+\frac{3}{2}}\sum_{l_3\leq Y}\frac{1}{l_3^{\frac{1}{2}n+\frac{5}{2}}}\sum_{\substack{\bmh{m}\neq 0\\ F_1(\bmh{m})=F_2(\bmh{m})=0}}|Q((\bmh{m},0),l_3)|J((\bmh{m},0),Y)\\
\ll& X^\epsilon Y^{-\frac{1}{2}n+\frac{3}{2}}\Bigl(\sum_{l_3\leq \xi}\sum_{\substack{\bmh{m}\neq 0\\ F_1(\bmh{m})=F_2(\bmh{m})=0}}+\sum_{\xi\leq l_3\leq Y}\sum_{\bmh{m}\neq 0} \Bigr)\\
=&P_4^{'}(Y)+P_4^{''}(Y),
\end{align*}
say. The optimal value of $\xi$ will depend on $Y$ and $X$ and may exceed $Y$, in which case $P_4^{''}(Y)=0$. To bound $P_4^{'}(Y)$ we write $l_3=k_3l_4$ and apply the first statement of Lemma \ref{JsumD0} to get
\begin{align*}
P_4^{'}(Y)&\ll \frac{X^\epsilon\min(X,Y)^{\frac{1}{2}n-\frac{1}{2}}}{X^{\frac{1}{2}n-1}Y^{\frac{1}{2}n-\frac{3}{2}}}\sum_{k_3l_4\leq \xi}k_3^{\frac{1}{3}n+\frac{1}{3}-\frac{3}{2}}l_4^{\frac{1}{2}n-\frac{3}{2}}+\frac{X^\epsilon Y N^{\frac{1}{2}n-\frac{3}{2}}}{X^{n-2}}\sum_{l_3\leq \xi}l_3^{-\frac{1}{2}}\\
&\ll \frac{X^\epsilon\min(X,Y)^{\frac{1}{2}n-\frac{1}{2}}}{X^{\frac{1}{2}n-\frac{1}{2}}Y^{\frac{1}{2}n-\frac{3}{2}}}\sum_{k_3l_4\leq \xi}k_3^{\frac{1}{3}n+\frac{1}{3}-\frac{3}{2}}l_4^{\frac{1}{2}n-\frac{3}{2}}+\frac{X^\epsilon Y N^{\frac{1}{2}n-\frac{3}{2}}}{X^{n-2}}.
\end{align*}
We now fix $n$ as its intended value $6$ and get for the remaining sum
\begin{align*}
\sum_{k_3l_4\leq \xi}k_3^{\frac{5}{6}}l_4^{\frac{3}{2}}=\sum_{k_3\leq \xi}k_3^{\frac{5}{6}}\sum_{l_4\leq \xi/k_3}l_4^{\frac{3}{2}}\leq \xi^{\frac{7}{4}}\sum_{k_3\leq \xi}k_3^{-\frac{11}{12}}\ll \xi^{\frac{7}{4}}.
\end{align*}
Therefore we estimate
\begin{align*}
P_4^{'}(Y)\ll\frac{X^\epsilon \min(X,Y)^{\frac{5}{2}}\xi^{\frac{7}{4}}}{X^{\frac{5}{2}}Y^{\frac{3}{2}}}+ X^{-\frac{7}{4}+\epsilon}Y.
\end{align*}
Similarly, now employing Lemma \ref{Jsum} instead of Lemma \ref{JsumD0}, we have

\begin{align*}
P_4^{''}(Y)&\ll \frac{X^\epsilon\min (X,Y)^{\frac{1}{2}n-\frac{1}{2}}}{X^{\frac{1}{2}n-\frac{1}{2}}Y^{\frac{1}{2}n-\frac{3}{2}}}\sum_{l_3 \leq Y}l_3^{\frac{1}{3}n-1}+\frac{X^\epsilon Y N^{\frac{1}{2}n+\frac{1}{2}}}{X^{n}}\sum_{l_3 \geq \xi}\frac{1}{l_3}\\
&\ll \frac{X^\epsilon\min (X,Y)^{\frac{1}{2}n-\frac{1}{2}}}{X^{\frac{1}{2}n-\frac{1}{2}}Y^{\frac{1}{6}n-\frac{5}{6}}}+\frac{X^\epsilon Y N^{\frac{1}{2}n+\frac{1}{2}}}{X^{n}\xi^{\frac{2}{3}}}\\
&\ll \frac{X^\epsilon\min (X,Y)^{\frac{5}{2}}}{X^{\frac{5}{2}}Y^{\frac{1}{6}}}+\frac{X^\epsilon Y X^{-\frac{3}{4}}}{\xi^{\frac{2}{3}}}.
\end{align*}
This means that the optimal value of $\xi$ is determined by

\begin{align*}
\frac{\min(X,Y)^{\frac{5}{2}}\xi^{\frac{7}{4}}}{X^{\frac{5}{2}}Y^{\frac{3}{2}}}=\frac{ Y X^{-\frac{3}{4}}}{\xi^{\frac{2}{3}}}.
\end{align*}
This results in
\begin{align*}
\xi=\begin{cases}X^{-\frac{9}{29}}Y^{\frac{30}{29}} \text{, if } Y\geq X \\ X^{\frac{21}{29}}\text{, if } Y<X  \end{cases},
\end{align*}
which falls within the initial terms of reference. For $Y\geq X$ we thus have
\begin{align*}
P_4(Y)\ll  X^{-\frac{7}{4}+\epsilon}Y + X^\epsilon Y^{-\frac{1}{6}} + X^{-\frac{63}{116}+\epsilon}Y^{\frac{9}{29}}.
\end{align*}
If $Y<X$ then we have
\begin{align*}
P_4(Y)\ll X^{-\frac{7}{4}+\epsilon}Y+X^{-\frac{5}{2}+\epsilon}Y^{\frac{7}{3}}+X^{-\frac{143}{116}+\epsilon}Y.
\end{align*}
Putting the results together and recalling $Y\leq A_1N=A_1 X^{\frac{3}{2}}$ we arrive at
\begin{align*}
P_4(Y)&\ll X^{-\frac{1}{4}+\epsilon}+X^{-\frac{1}{6}+\epsilon}+X^{-\frac{1}{6}+\epsilon}+X^{-\frac{27}{116}+\epsilon}+X^{-\frac{9}{116}+\epsilon}\\
&\ll X^{-\frac{9}{116}+\epsilon}.
\end{align*}
Overall we get for $n\geq 6$
\begin{align*}
\Upsilon_4(X)=\sum_{1\leq Y_j\leq A_1N}X^{-\frac{9}{116}+\epsilon}\ll X^{-\frac{9}{116}+\epsilon}. 
\end{align*}
\newpage
\subsection{$\Upsilon_5$}\label{secup5}
The estimation of $P_5(Y)$ is closely related to section 48 of the third of Hooley's Papers on cubic forms \cite{h93}.
We start by a similar transformation as for $P_4(Y)$ and get
\begin{align*}
&P_5(Y)\ll \\
\ll& X^\epsilon Y^{-n-1}\!\!\sum_{\substack{\substack{l_3\leq Y\text{, }\bmh{m}\neq 0 }\\ \substack{ F_1(\bmh{m})=0 \\ F_2(\bmh{m})\neq 0}}}|Q((\bmh{m},0),l_3)|J((\bmh{m},0),Y)\sum_{k_1 k_2\leq Y/l_3}k_1^{\frac{1}{2}n+1}(k_1,F_2(\bmh{m}))^{\frac{1}{2}} k_2^{\frac{1}{2}n+\frac{3}{2}}\\
\ll& X^\epsilon Y^{-\frac{1}{2}n+1}\sum_{l_3\leq Y}\frac{1}{l_3^{\frac{1}{2}n+2}}\sum_{\substack{\bmh{m}\neq 0\\ F_1(\bmh{m})=0}}|Q((\bmh{m},0),l_3)|J((\bmh{m},0),Y)\\
\ll& X^\epsilon Y^{-\frac{1}{2}n+1}\Bigl(\sum_{l_3\leq \xi}\sum_{\substack{\bmh{m}\neq 0\\ F_1(\bmh{m})=0}}+\sum_{\xi\leq l_3\leq Y}\sum_{\bmh{m}\neq 0} \Bigr)\\
=&P_5^{'}(Y)+P_5^{''}(Y),
\end{align*}
say. We apply the second statement of Lemma 6.4 on $P_5^{'}(Y)$ and get
\begin{align*}
P_5^{'}(Y)&\ll \frac{X^\epsilon \min(X,Y)^{\frac{1}{2}n-\frac{1}{2}}}{X^{\frac{1}{2}n-\frac{1}{2}}Y^{\frac{1}{2}n-1}}\sum_{k_3l_4\leq \xi}k_3^{\frac{1}{3}n+\frac{1}{3}-1}l_4^{\frac{1}{2}n-1}+\frac{Y^{\frac{1}{2}}N^{\frac{1}{2}n-\frac{1}{2}}}{X^{n-1-\epsilon}}\sum_{l_3\leq \xi}l_3^{-\frac{1}{3}}\\
&\ll \frac{X^\epsilon \min(X,Y)^{\frac{1}{2}n-\frac{1}{2}}}{X^{\frac{1}{2}n-\frac{1}{2}}Y^{\frac{1}{2}n-1}}\sum_{k_3l_4\leq \xi}k_3^{\frac{1}{3}n+\frac{1}{3}-1}l_4^{\frac{1}{2}n-1}+\frac{Y^{\frac{1}{2}}N^{\frac{1}{2}n-\frac{1}{2}}}{X^{n-1-\epsilon}}.
\end{align*}
Continuing as before, we set $n$ to $6$ and get for the sum over $k_3l_4$ the estimate
\begin{align*}
\sum_{k_3l_4\leq \xi}k_3^{\frac{4}{3}}l_4^2&=\sum_{k_3\leq \xi}k_3^{\frac{4}{3}}\sum_{l_4\leq \xi/k_3}l_4^2\\
&\ll \xi^{\frac{9}{4}}\sum_{k_3\leq \xi}k_3^{-\frac{3}{4}}\\
&\ll \xi^{\frac{9}{4}}.
\end{align*}
We recall $N=A_1X^{\frac{3}{2}}$ and conclude  
\begin{align*}
P_5^{'}(Y)&\ll \frac{X^\epsilon \min(X,Y)^{\frac{5}{2}}\xi^{\frac{9}{4}}}{X^{\frac{5}{2}}Y^2}+Y^{\frac{1}{2}}X^{-\frac{5}{4}+\epsilon}.
\end{align*}
We apply Lemma 6.3 on $P_5^{''}(Y)$ set $n=6$ and get
\begin{align*}
P_5^{''}(Y)&\ll \frac{X^\epsilon\min (X,Y)^{\frac{1}{2}n-\frac{1}{2}}}{X^{\frac{1}{2}n-\frac{1}{2}}Y^{\frac{1}{2}n-1}}\sum_{l_3 \leq Y}l_3^{\frac{1}{3}n-\frac{1}{2}}+\frac{X^\epsilon Y^{\frac{1}{2}} N^{\frac{1}{2}n+\frac{1}{2}}}{X^{n}}\sum_{l_3 \geq \xi}\frac{1}{l_3^{\frac{1}{2}}}\\
&\ll \frac{X^\epsilon\min (X,Y)^{\frac{1}{2}n-\frac{1}{2}}}{X^{\frac{1}{2}n-\frac{1}{2}}Y^{\frac{1}{6}n-\frac{5}{6}}}+\frac{X^\epsilon Y^{\frac{1}{2}} N^{\frac{1}{2}n+\frac{1}{2}}}{X^{n}\xi^{\frac{1}{6}}}\\
&\ll \frac{X^\epsilon\min (X,Y)^{\frac{5}{2}}}{X^{\frac{5}{2}}Y^{\frac{1}{6}}}+\frac{X^\epsilon Y^{\frac{1}{2}} X^{-\frac{3}{4}}}{\xi^{\frac{1}{6}}}.
\end{align*}
Consequently the optimal value of $\xi$ is now determined by the equation
\begin{align*}
\frac{\min(X,Y)^{\frac{5}{2}}\xi^{\frac{9}{4}}}{X^{\frac{5}{2}}Y^{2}}=\frac{ Y^{\frac{1}{2}} X^{-\frac{3}{4}}}{\xi^{\frac{1}{6}}}.
\end{align*}
As in \cite{h93} and \cite{h9} the optimal values of $\xi$ is the same as in the previous subsection. We again have
\begin{align*}
\xi=\begin{cases}X^{-\frac{9}{29}}Y^{\frac{30}{29}} \text{, if } Y\geq X \\ X^{\frac{21}{29}}\text{, if } Y<X  \end{cases}.
\end{align*}
Depending on weather $Y\geq X$ or $Y<X$ we get
\begin{align*}
P_5(Y)&\ll X^{-\frac{5}{4}+\epsilon}Y^{\frac{1}{2}}+Y^{-\frac{1}{6}+\epsilon}+Y^{\frac{19}{58}}X^{-\frac{81}{116}}\\
\end{align*}
or
\begin{align*}
P_5(Y)\ll X^{-\frac{5}{4}+\epsilon}Y^{\frac{1}{2}}+Y^{\frac{7}{3}}X^{-\frac{5}{2}}+Y^{\frac{1}{2}}X^{-\frac{101}{116}+\epsilon}.
\end{align*}
So overall
\begin{align*}
\Upsilon_5(X)\ll X^{-\frac{1}{6}+\epsilon}.
\end{align*}

\subsection{$\Upsilon_1$, Singular Series, Singular Integral, and Conclusion}

It remains to evaluate the proposed main term $\Upsilon_1(X)$ that is given by
\begin{align*}
\Upsilon_1(X)=\sum_{k\leq A_1N}k^{-n-1}Q(\bm 0,k)I_k(\bm 0).
\end{align*}

Associated with it is the singular series 
\begin{align*}
\mathfrak{S}=\sum_{k=1}^\infty k^{-n}Q(\bm 0,k). 
\end{align*}
This series is absolutely convergent, if $n\geq 6$. Indeed, writing $k=k_1k_2l_3$ as before, we can use \ref{primecase}, \ref{primesquare}, and \ref{6.1} to get
\begin{align*}
 \sum_{L< k\leq 2L}k^{-n-1}|Q(\bm 0,k)|&\ll \sum_{L< k_1k_2l_3\leq 2L}(k_1k_2l_3)^{-n-1}(k_1k_2l_3)^{\frac{5}{6}(n+1)+\epsilon}l_3^{\frac{2}{3}} \\
 &\leq L^{-\frac{1}{6}(n+1)+\epsilon}\sum_{l_3\leq L}l_3^{\frac{2}{3}}\\
 &\leq L^{-\frac{1}{6}(n+1)+1+\epsilon}.
\end{align*}

By the usual theory the singular series is related to the existence of $p$-adic solutions of the equation in question. For primitive $C$ it is easy to show that these exist and so we have $\mathfrak{S}>0$. In that case the reduction $C$ modulo $p$ is never identically zero. If, after changing the indices, we have $C(x_1,0,\ldots ,0)\equiv a x_1^3 (p)$ for some $p\nmid a$, then surely \begin{align*}
C(x_1,0,\ldots ,0)\equiv x_{n+1}^2 (p)
\end{align*}
has a nontrivial and thus nonsingular solution, which lifts to a $p$-adic one by Hensel's Lemma.
The next possible case is that we have $C(x_1,x_2,0,\ldots ,0)\equiv a x_1^2 x_2+bx_1x_2^2 (p)$ with again $p\nmid a$. For odd $p$ there is again a solution having $p\nmid x_n$ which lifts. For $p=2$ we choose $x_1=x_n=1$ and $x_2=0$.
The last case is that $C(x_1,x_2,x_3,\ldots ,0)\equiv a x_1 x_2 x_3 (p)$, which is trivial.

Next, looking at $I_k$, we recall that we chose $\bm{a}=(\bmh{a},0)$, such that $\bmh{a}$ is a (nonsingular) zero of $C$. This means the remaining Integral
\begin{align*}
I_k(\bm{0})=\int_{\mathbb{R}^n}\Gamma(\bm{t}-\bm{a})h\Bigl(\frac{k}{N},f(\bm{t})\Bigr)d\bm{t}
\end{align*}
can be treated in the same way as the analogous object of \cite{h8}. After replacing the variables of integration we can apply Lemma 9 of \cite{hb1}. This gives us
$$I_k(0)=\mathcal{J}+O(\frac{k^{1/12}}{N^{1/12}})=O(1),$$
where $\mathcal{J}>0$ by the implicit function theorem as $\bm{a}$ is a zero of $f$.
Putting together these results we have for $n\geq 6$
\begin{align*}
\Upsilon_1(X)&=\mathcal{J}\mathfrak{S}+O(\sum_{k>A_1N}k^{-n-1}Q(\bm 0,k))+O(N^{-1/12}\sum_{k\leq A_1N}k^{-n-1+\frac{1}{12}}Q(\bm 0,k))\\
&=\mathcal{J}(0)\mathfrak{S}+O(N^{-\frac{1}{6}+\epsilon})+O(N^{-\frac{1}{12}+\epsilon})\\
&=\mathcal{J}\mathfrak{S}+O(X^{-\frac{1}{4}})
\end{align*}

Combining the results of the previous subsection with \eqref{UPdef} and \eqref{c_N}, we get for $n\geq 6$ 
\begin{align*}
\Upsilon(X)&=c_N X^{n-\frac{5}{2}}\Bigl(\Upsilon_1(X)+\Upsilon_2(X)+\Upsilon_3(X)+\Upsilon_4(X) \Bigr)\\
&=X^{n-\frac{5}{2}}\mathcal{J}\mathfrak{S}\Bigl(1+O( \log^{-\delta} X)\Bigr).
\end{align*}
This completes the proof of Theorem \ref{Thm1.1}.

\end{document}